\documentclass{amsart}

\usepackage{amsmath}
\usepackage{amssymb}
\usepackage{times}
\usepackage{hyperref}
\usepackage{graphicx}



\newtheorem{proposition}{Proposition}
\newtheorem{definition}{Definition}
\newtheorem{theorem}{Theorem}
\newtheorem{lemma}{Lemma}
\newtheorem{corollary}{Corollary}

\def\hat{\widehat}
\def\tilde{\widetilde}

\def\R{\mathbb{R}}

\def\Pr{\mathbb{P}}
\def\cal{\mathcal}
\def\wh{\widehat}

\def\cau{\frac{1}{2i\pi}}
\def\ds{\displaystyle}
\newcommand{\parag}[1]{\par\smallbreak{\bf\em #1}}

\author{Nicolas Broutin and Philippe Flajolet}
\date{\today}
\title[Height and diameter in random binary trees]{The distribution
of height and diameter in random non-plane binary trees}
\address{P.F., N.B.:
Algorithms Project, INRIA-Rocquencourt, F-78153 Le Chesnay (France)}

\usepackage[small]{caption2}
\usepackage{booktabs}
\usepackage[sort,numbers]{natbib}
\usepackage{paralist}

\hypersetup{
    bookmarks=true,         
    unicode=false,          
    pdftoolbar=true,        
    pdfmenubar=true,        
    pdffitwindow=true,      
    pdftitle={My title},    
    pdfauthor={Author},     
    pdfsubject={Subject},   
    pdfnewwindow=true,      
    pdfkeywords={keywords}, 
    colorlinks=true,       
    linkcolor=blue,          
    citecolor=blue,        
    filecolor=blue,      
    urlcolor=blue           
}

\newcommand\p[1]{\mathbb{P}\left\{#1\right\}}
\newcommand{\pran}[1]{{\left(#1\right)}}

\newcommand{\E}[1]{{\mathbb E}\left[ #1\right]}


\makeatletter
\def\timenow{\@tempcnta\time
  \@tempcntb\@tempcnta
  \divide\@tempcntb60
  \ifnum10>\@tempcntb0\fi\number\@tempcntb
  \multiply\@tempcntb60
  \advance\@tempcnta-\@tempcntb:\ifnum10>\@tempcnta0\fi\number\@tempcnta}
\makeatother

\begin{document}

\maketitle

\begin{abstract}
This study is dedicated    to precise distributional analyses  of  the
height of non-plane  unlabelled  binary trees (``Otter  trees''), when
trees of a given size are taken with  equal likelihood.  The height of
a  rooted tree   of  size~$n$ is  proved to   admit  a limiting  theta
distribution, both   in a central  and  local sense,  and obey
moderate as well as large   deviations estimates.  The  approximations
obtained  for height  also  yield the   limiting  distribution of  the
diameter of unrooted trees. The proofs rely on  a precise analysis, in
the complex plane   and near  singularities, of  generating  functions
associated with trees of bounded height.
\end{abstract}

\section*{\bf Introduction}
\renewcommand{\baselinestretch}{.9}
We consider trees that  are  \emph{binary, non-plane, unlabelled,  and
rooted}; that is, a tree is taken  in the graph-theoretic sense and it
has nodes   of (out)degree  two  or  zero  only;  a special    node is
distinguished,  the root, which   has   degree two.   In  this  model,
\emph{the nodes are  indistinguishable},    and no order  is   assumed
between the neighbours  of a node.  Let  $\mathcal Y$ denote the class
of such trees,  and  let $\mathcal Y_n$   be the subset consisting  of
trees  with $n$ external nodes  (i.e., nodes of degree~zero).  In this
article, we  study the (random) \emph{height} $H_n$  of a tree sampled
uniformly from $\mathcal Y_n$.

Most of the results concerning random trees of fixed size are relative
to the situation where one  can \emph{distinguish} the neighbours of a
node, either by their labels (labelled trees), or by the order induced
on the  progeny through an embedding  in the plane  (plane trees); see
the  reference  books~\cite{Drmota09,FlSe09}  and  the  discussion  by
Aldous~\cite{Aldous91c}    who   globally    refers    to   these   as
\emph{``ordered   trees''}.   In  this   range   of models, Meir   and
Moon~\cite{MeMo78} determined that  the depth  of nodes  is  typically
$O(\sqrt{n})$ for all ``\emph{simple  varieties}'' of trees, which are
determined  by restricting  in an arbitrary way the  collection  of  allowed  node degrees.
Regarding height, a few special cases  were studied early: R\'enyi and
Szekeres~\cite{ReSz67} proved in particular that the average height of
\emph{labelled}  non-plane    trees of   size~$n$   is  asymptotic  to
$2\sqrt{\pi  n}$;  De  Bruijn, Knuth,  and Rice~\cite*{BrKnRi72} dealt
with
\emph{plane}  trees and  showed  that  the average   height is equivalent  to
$\sqrt{\pi  n}$ as   $n\to\infty$.  Eventually,  Flajolet and Odlyzko~\cite{FlOd82}
developed an approach for height that  encompasses all simple varieties  of trees;
see also~\cite{FlGaOdRi93} for additional results.

Under such  models with distinguishable neighbourhoods,  
trees of a fixed size $n$
may be seen as Galton--Watson  processes (branching processes) conditioned on the size being
$n$,   see~\cite{Aldous90,Kennedy75,Kolchin86}, and 
there are natural random
walks associated to various tree traversals. Accordingly,   
probabilistic techniques have  been  successfully applied 
to quantify  tree height and   width~\cite{ChMa01,ChMaYo00},
based on Brownian excursion.  An important probabilistic
approach consists in establishing the existence of a  continuous limit of suitably rescaled random
trees of increasing sizes---one can  then read off,
to first asymptotic order at least, some of the limit parameters
directly on the limiting object.  The latter point of  view has been adopted
by Aldous~\cite{Aldous91b,Aldous91c,Aldous93}  in his definition of the 
\emph{continuum
random tree} (CRT): see the survey by Le Gall~\cite{LeGall05}
for a  recent account of probabilistic developments along these lines.

The  case    of trees     (as    are  considered  here)   that    have
\emph{indistinguishable neighbourhoods} is essentially different. Such
trees cannot  be generated by a branching  process conditioned by size
and no direct random walk approach appears to  be possible, due to the
inherent  presence of symmetries.     (An analysis of  such symmetries
otherwise occurs in the  recent article~\cite{BoFl09}.) The analysis of
unlabelled  non-plane  trees   finds  its origins    in   the works of
P\'olya~\cite{Polya37} and Otter~\cite{Otter48}.      However,   these
authors mostly  focused on enumeration---the problem of characterizing
typical parameters of  these random trees  remained largely untouched.
Recently, in  an  independent
study, Drmota and Gittenberger~\cite{DrGi08} have examined the profile
of \emph{``general'' trees} (where all  degrees are allowed) and shown
that the joint distribution of the number of  nodes at a finite number
of  levels converges weakly to the  finite dimensional distribution of
Brownian excursion local times. They further  extended the result to a
convergence   of the entire profile  to   the Brownian excursion local
time.  

The foregoing discussion suggests that,  although  there is no  clear \emph{exact}
reduction  of unlabelled  non-plane trees to random   walks, such  trees largely
behave like  simply generated families of ordered trees.   In particular, it suggests
that  the  rescaled   height  $H_n/\sqrt n$ is likely to admit a   limit
distribution  of     the   theta-function   type~\cite{FlOd82,Kennedy76,ReSz67,
DuIg77}.   We  shall  prove   that  such   is indeed  the  case  for
\emph{non-plane binary trees}  in Theorems~\ref{clt}   and~\ref{llt}
below.   We also provide    moderate  and large deviations   estimates
(Theorems~\ref{thm:moderate} and~\ref{thm:large_deviations}),  as  well
as asymptotic  estimates for  moments (Theorem~\ref{thm:moments}),
see~\S\ref{asy-sec}. Equipped with solid analytic estimates
regarding height, we can then proceed to characterize
 the \emph{diameter of unrooted  trees} in~\S\ref{sec:diam},
this both in a local and central form (Theorems~\ref{locdiam-thm} and~\ref{centdiam-thm}). 
Some \emph{a  posteriori}  observations    that  complete    the  
picture are offered in our Conclusion section, \S\ref{sec:concl}.

A preliminary investigation of  the distribution of height in rooted trees is reported
in the extended abstract \cite{BrFl08}.  Our interest in this range of
problems initially arose  from questions of Jean-Fran\c{c}ois Marckert
and Gr\'egory Miermont \cite{MaMi10}, in their
endeavour to extend the probabilistic  methods of Aldous to non-plane
trees and develop corresponding continuous models---we are indebted
to them for being at the origin of the present study.

\section{\bf Trees and generating functions}\label{sec-basics}

{\bf\em Tree enumeration.}
Our approach is entirely based on \emph{generating functions}. The class~$\cal Y$ of
(non-plane, unlabelled, rooted) binary trees is defined to include the 
tree with a single external node. A tree has \emph{size}~$n$ if it has~$n$ 
external nodes, hence $n-1$ internal nodes. The cardinality of 
the subclass~$\cal Y_n$ of trees of size~$n$ is denoted by~$y_n$
and the generating function (GF) of~$\cal Y$ is
\[
y(z):=\sum_{n\ge1} y_n z^n=z+z^2+z^3+2z^4+3z^5+6z^6+11z^7+23z^8+\cdots,
\]
the coefficients corresponding to the entry A001190 of Sloane's 
\emph{On-line Encyclopedia of Integer Sequences}. The trees of $\mathcal Y$ with 
size at most~6 are shown in Figure~\ref{fig:trees}.

\begin{figure}[htb]
	\includegraphics[width=\linewidth]{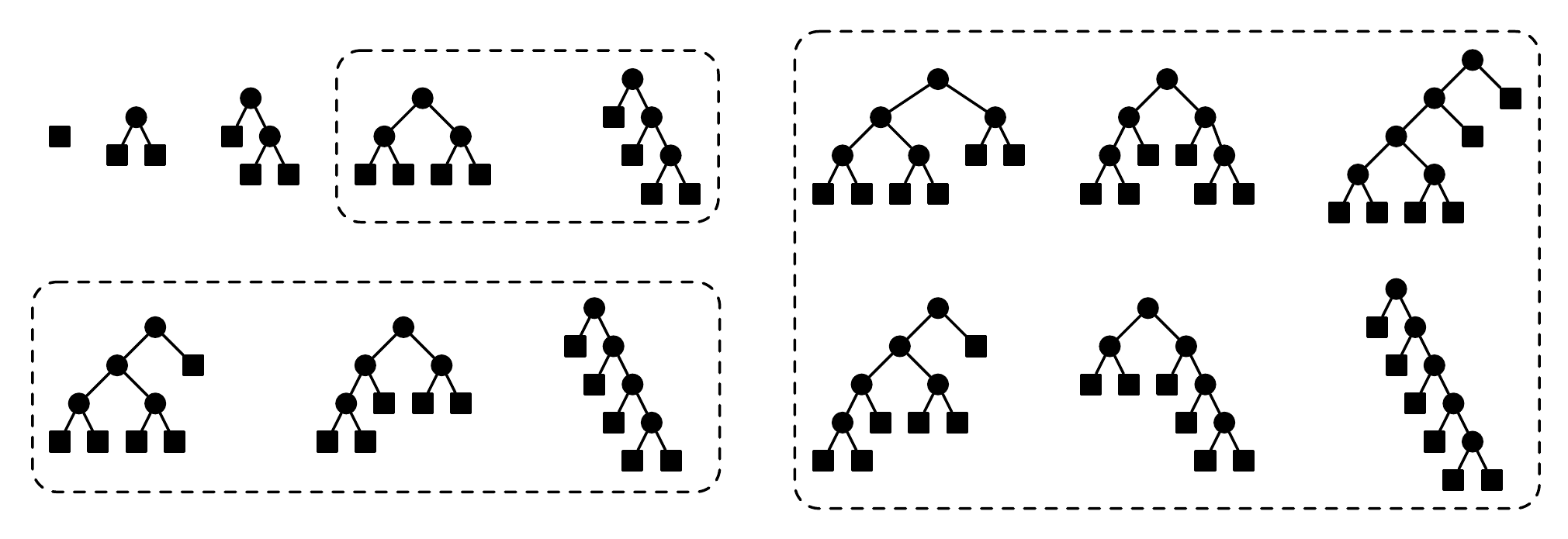}
	\caption{\label{fig:trees}The binary unlabelled trees of size less than six.}
\end{figure}

A binary tree is either an external node or a root appended to an unordered pair of
two (not necessarily distinct) binary trees.
In the language of analytic combinatorics~\cite{FlSe09},
this corresponds to the (recursive) specification
\[
{\cal Y}=\cal Z+\operatorname{\sc MSet}_2(\cal Y),
\]
where $\cal Z$ represents a generic atom (of size~1) and $\operatorname{\sc MSet}_2$ forms multisets of two elements.
The basic functional equation 
\begin{equation}\label{eq:gen_eq}
y(z)=z+\frac12 y(z)^2+\frac12y(z^2),
\end{equation}
closely related to the early works of  P\'olya (1937; see~\cite{Polya37,PoRe87},
and first studied by Otter~(1948; see~\cite{Otter48}),
follows from fundamental principles of combinatorial enumeration~\cite{FlSe09,HaPa73}.
The term $\frac 1 2 y(z^2)$ accounts for potential symmetries---hereafter, we refer to such terms involving functions of $z^2,z^3,\ldots,$ as \emph{P\'olya terms}.
According to the general theory of analytic combinatorics, we shall 
operate in an essential manner with properties of generating functions
in the \emph{complex plane}. The following lemma is classical but we sketch a proof, 
as its ingredients are needed throughout our work.

\begin{lemma}[Otter~\cite{Otter48}] \label{lem:rho1}
Let $\rho$ be the radius of convergence  of $y(z)$. Then, one has $1/4
\le \rho < 1/2$, and~$\rho$ is  determined implicitly by $\rho+\frac
1 2 y(\rho^2)=\frac  1 2$.  As  $z\to\rho^-$,  the generating function
$y(z)$ satisfies
\begin{equation}\label{singy}
y(z)=1-\lambda\sqrt{1-z/\rho}+O\left(1-z/\rho\right),\qquad
\lambda=\sqrt{2\rho+2\rho^2y'(\rho^2)}.
\end{equation}
Furthermore, the number~$y_n$ of trees of size~$n$ satisfies asymptotically
\begin{equation}\label{otter}
y_n= \frac{\lambda}{2 \sqrt \pi} \cdot n^{-3/2}\rho^{-n} \left(1+O\left(\frac{1}{n}\right)\right),
\end{equation}
\end{lemma}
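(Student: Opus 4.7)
The plan is to rewrite the functional equation~\eqref{eq:gen_eq} as the quadratic
\[
y(z)^{2}-2y(z)+2z+y(z^{2})=0,
\]
and to retain the branch $y(z)=1-\sqrt{1-2z-y(z^{2})}$ singled out by $y(0)=0$. Setting $g(z):=1-2z-y(z^{2})$, the whole lemma reduces to a study of the first positive zero of $g$. Since a non-plane binary shape admits at least one plane realisation, one has the coefficient-wise domination $y_{n}\le C_{n-1}$ by Catalan numbers, whence $\rho\ge 1/4$; in particular $\rho<1$, so $y(z^{2})$, and therefore $g$, is analytic on a disk of radius $\sqrt{\rho}>\rho$. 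This is the one point where the P\'olya term $\tfrac12 y(z^{2})$ could cause trouble, and it is harmless precisely because $\sqrt{\rho}>\rho$: $y(z^2)$ behaves, near $z=\rho$, as a perfectly known analytic function.

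On the real segment $[0,1/2)$, $g$ is continuous and strictly decreasing with $g(0)=1$; let $\rho$ be its first zero. This gives the characterizing equation $\rho+\tfrac12 y(\rho^{2})=\tfrac12$, and the positivity $y(\rho^{2})>0$ forces the strict inequality $\rho<1/2$. A monotone-convergence argument, together with the representation $y=1-\sqrt{g}$ on $(0,\rho)$, identifies this $\rho$ with the radius of convergence of $y$, with $y(\rho^{-})=1$. The singular expansion~\eqref{singy} then results from a Taylor expansion of $g$ at $\rho$: setting
\[
\lambda^{2}\;:=\;-\rho\,g'(\rho)\;=\;2\rho+2\rho^{2}\,y'(\rho^{2})
\]
(finite since $\rho^{2}<\rho$), one gets $g(z)=\lambda^{2}(1-z/\rho)+O\bigl((1-z/\rho)^{2}\bigr)$, and extracting the square root produces the claimed expansion of $y$.

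The coefficient estimate~\eqref{otter} then follows by the standard transfer theorems of singularity analysis \`a la Flajolet--Odlyzko, provided $\rho$ is the \emph{unique} dominant singularity of $y$. I expect this last point to be the main technical step, though not a deep one: for any $z_{0}$ with $|z_{0}|=\rho$ and $z_{0}\notin\{\rho,-\rho\}$, the squared point $z_{0}^{2}$ satisfies $|z_{0}^{2}|=\rho^{2}$ and is not real positive, so the aperiodicity of $(y_{n})$ (immediate from $y_{1}=y_{2}=y_{3}=1$) yields the strict inequality $|y(z_{0}^{2})|<y(\rho^{2})$; combining with the triangle inequality gives $|2z_{0}+y(z_{0}^{2})|<1$, so $g(z_{0})\ne 0$ and $y$ extends analytically across $z_{0}$. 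The remaining case $z_{0}=-\rho$ is settled by the direct computation $g(-\rho)=1+2\rho-y(\rho^{2})=4\rho\ne 0$. A delta-domain contour integral then yields $y_{n}\sim(\lambda/(2\sqrt\pi))\,n^{-3/2}\rho^{-n}$, the $O(1/n)$ correction arising from the $O(1-z/\rho)$ remainder term in the singular expansion.
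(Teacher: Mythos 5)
Your proposal follows essentially the same route as the paper's proof: solve the quadratic for $y$, exploit the fact that the P\'olya term $y(z^2)$ is regular across $|z|=\rho$, locate $\rho$ as the smallest positive zero of $g(z)=1-2z-y(z^2)$, read off $\lambda^2=-\rho g'(\rho)$ from the Taylor expansion of $g$, and transfer by singularity analysis. Two remarks are in order.

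First, a small logical gap. You write ``whence $\rho\ge 1/4$; in particular $\rho<1$'', but the Catalan domination $y_n\le C_{n-1}$ gives only the \emph{lower} bound $\rho\ge 1/4$, which of course does not imply $\rho<1$. The paper avoids this by also invoking the crude lower bound $y_n\ge 2^{1-n}C_{n-1}$ (each shape has at most $2^{n-1}$ plane embeddings), which yields $\rho\le 1/2$ at the outset. Your argument can in fact dispense with the lower bound entirely: $\rho\ge 1/4$ already gives $\sqrt\rho\ge 1/2$, so $g$ is analytic in $|z|<1/2$; your intermediate-value argument locates a zero $\rho_g\in(0,1/2)$; and the identification $\rho=\rho_g$ (which your ``monotone-convergence'' step is meant to provide) then delivers $\rho<1/2<1$ a posteriori. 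As written, however, $\rho<1$ is invoked before it has been established, so the presentation needs to be reordered or the Catalan lower bound reinstated. Your ``monotone-convergence argument'' identifying $\rho_g$ with the radius of convergence is also left vague; the substantive point is that if $y$ were analytic past $\rho_g$, then differentiating the functional equation at $z=\rho_g$ (where $y=1$) would force $2+2\rho_g\,y'(\rho_g^2)=0$, contradicting positivity.

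Second, a genuine difference that works in your favour: you spell out the uniqueness of the dominant singularity, verifying via aperiodicity of $(y_n)$ that $|y(z_0^2)|<y(\rho^2)$ for $z_0\ne\pm\rho$ on the circle, plus the direct computation $g(-\rho)=4\rho\ne 0$; this is exactly what is needed to ensure the singular expansion holds in a $\Delta$-domain, a point the paper passes over with ``this reasoning also justifies the singular expansion.'' This makes your argument a slight refinement of, rather than an alternative to, the paper's proof; the decomposition, the identification of $\lambda$, and the final transfer are identical.
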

\begin{proof}
The number of plane binary trees 
with~$n$ external nodes is given by the Catalan number
$C_{n-1}=\frac 1 n \binom{2n-2}{n-1}$.
The number of symmetries in a tree of size~$n$ being \emph{a priori} between ~$1$
and~$2^{n-1}$, one has the bounds 
$$C_{n-1} 2^{1-n}\le y_n\le C_{n-1}.$$ 
As it is well known, the Catalan numbers satisfy $C_n\sim\pi^{-1/2} 4^n n^{-3/2}$, so that the radius of convergence $\rho$ satisfies
the bounds $1/4 \le \rho < 1/2$.
It follows that $y(z^2)$ is analytic in a disc of radius~$\sqrt{\rho}$,
which properly contains $\{|z|\le\rho\}$.
Then, from~\eqref{eq:gen_eq}, upon solving for~$y(z)$,
we obtain
\begin{equation}\label{eq:yz}
y(z)=1-\sqrt{1-2z-y(z^2)},
\end{equation}
which can only become singular when the argument of the square root vanishes.
By Pringsheim's Theorem~\cite[p.~240]{FlSe09},
the value $\rho$ is then the smallest positive solution of
$2z+  y(z^2)=1$, corresponding to a simple root,
 and, at this point, we must have $y(\rho)=1$, given~\eqref{eq:yz}.
This reasoning also justifies the singular expansion~\eqref{singy},
which is seen to be valid in a $\Delta$-domain~\cite[\S{VI.3}]{FlSe09}, i.e.,
 a domain of the form
\begin{equation}\label{Deltadef}
\{z~:~|z|< \rho+\epsilon, z\neq \rho, |\arg(z-\rho)|> \theta\}\qquad \epsilon,\theta>0
\end{equation}
that extends beyond the disc of convergence~$|z|\le\rho$.

Equation~\eqref{otter} constitutes Otter's celebrated estimate:
it results from translating the square root singularity of~$y(z)$ by means
of either Darboux's method~\cite{HaPa73,Otter48,Polya37}
 or  singularity analysis~\cite{FlSe09}.
\end{proof}

Numerically, one finds~\cite{Finch03,FlSe09,Otter48}:
\[
\rho \doteq 0.40269\,750367 
, \quad
\lambda \doteq 1.13003\, 37163 
, \quad
\frac{\lambda}{2\sqrt{\pi}}\doteq 0.31877\, 66259 
.
\]
\smallbreak
{\bf\em Height.} 
In a tree, \emph{height} is defined as the maximum number
of edges along branches connecting the root to an external node.
Let $y_{h,n}$ be the number of trees of size~$n$ and
height \emph{at most}~$h$ and 
let  $y_h(z)=\sum_{n\ge  1} y_{h,n}
z^n$ be the corresponding generating function. 
The arguments leading to (\ref{eq:gen_eq}) yield the fundamental recurrence
\begin{equation}\label{eq:gen_eq_h}
\quad y_{h+1}(z)= z + \frac 1 2   y_h(z)^2 + \frac 1 2   y_h(z^2),
\qquad h\ge0,
\end{equation}
with initial value $y_0(z)=z$, and 
\begin{equation}\label{eq:yh_init}
\left\{\begin{array}{lll}
y_1(z)&=&z+z^2, \qquad 
y_2(z)~=~z+z^2+z^3+z^4,\\
y_3(z)&=&z+z^2+z^3+2z^4+2z^5+2z^6+z^7. 
\end{array}\right.
\end{equation}
A central r\^ole in what follows is played by the generating function of 
trees with height \emph{exceeding} $h$:
\[
e_h(z)\equiv \sum_{n\ge 1} e_{h,n} z^n := y(z)-y_h(z),
\]
Then, a trite calculation shows that 
the $e_h(z)$ satisfy the main recurrence
\begin{equation}\label{eq:rec_eh}
e_{h+1}(z) = y(z) e_h(z) \pran{1-\frac{e_h(z)}{2y(z)}} + \frac {e_h(z^2)}2,
\qquad e_0(z)=y(z)-z,
\end{equation}
on which our subsequent treatment of height is entirely based.

\smallbreak
\emph{\bf\em Analysis.} The distribution of height is accessible 
by
\begin{equation}\label{eq:gen_tail}
\p{H_n > h} = \frac{y_n - y_{n,h}}{y_n} = \frac{e_{h,n}}{y_n},
\end{equation} 
where $e_{h,n}=[z^n]e_h(z)$. 
Lemma~\ref{lem:rho1} provides an estimate for $y_n$, and we shall get a handle on 
the asymptotic properties of $e_{h,n}$ by means of Cauchy's coefficient formula,
\begin{equation}\label{eq:cauchy}
e_{n,h} = \frac 1{2i\pi} \int_{\gamma} e_h(z)\frac{dz}{z^{n+1}},
\end{equation}
upon choosing a suitable integration contour~$\gamma$ in~\eqref{eq:cauchy},
of the form commonly used in singularity analysis theory~\cite{FlSe09};
see Figure~\ref{fig:hankel} below.
This task necessitates first developping suitable estimates of $e_h(z)$, for
values of~$z$ both \emph{inside} and \emph{outside} of the disc
of convergence~$|z|<\rho$. 
Precisely, we shall need estimates valid in a \emph{``tube''} around 
an arc of the circle~$|z|=\rho$, as well as inside a \emph{``sandclock''}
anchored at~$\rho$ (see Figure~\ref{fig:hankel}).

\begin{definition}
The \emph{``tube''} $\mathcal T(\mu,\eta)$ of width $\mu$ and angle~$\eta$ is defined as
\begin{equation}\label{eq:tube}
\mathcal T(\mu,\eta):=\{z:\quad -\mu <|z|-\rho<\mu ,~|\arg(z)|> \eta\}
.
\end{equation}
The \emph{``sandclock''} of radius~$r_0$ and angle~$\theta_0$ anchored at~$\rho$
is defined as
\begin{equation}\label{eq:sandclock}
\mathcal S(r_0,\theta_0):=\{z: \quad |z-\rho|< r_0,
\quad \pi/2-\theta_0 < |\arg(z-\rho)| < \pi/2+\theta_0\}.
\end{equation}
\end{definition}
\begin{figure}[t]\centering
	\begin{picture}(300,180)
	\put(170,0){\includegraphics[width=6cm]{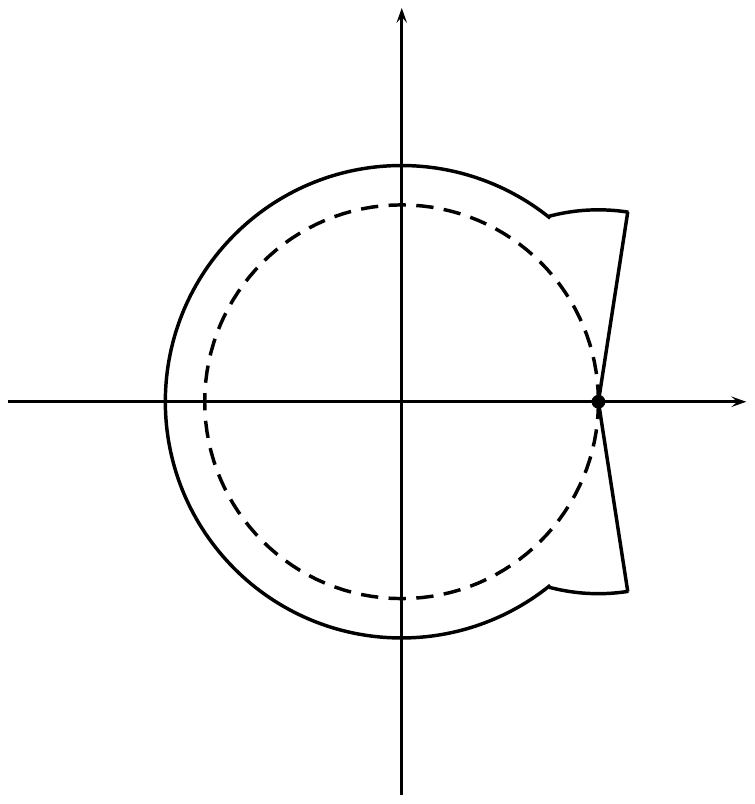}}
	\put(-30,0){\includegraphics[width=6cm]{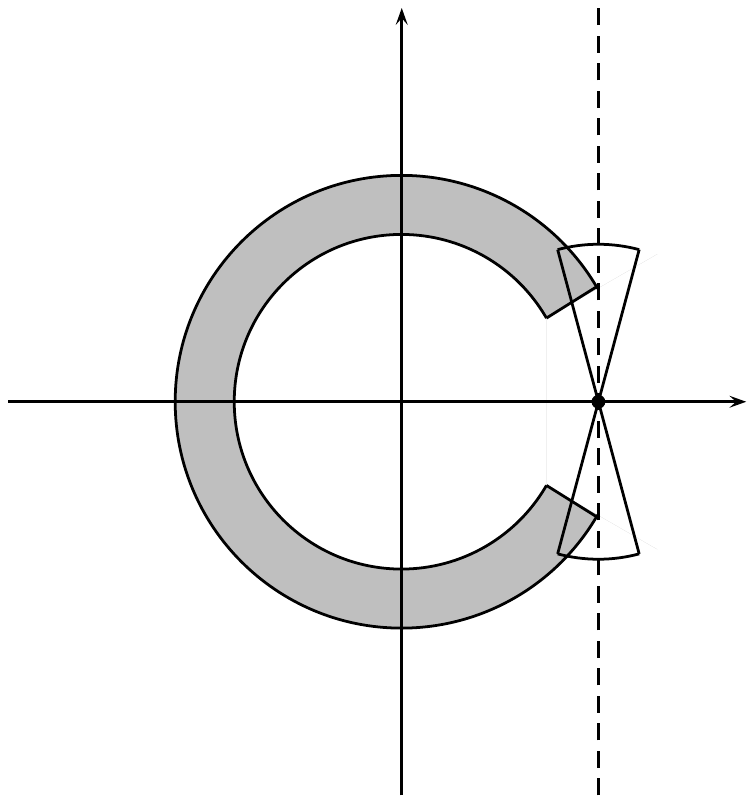}}
	\put(305,95){$\rho$}                       
	\put(107,95){$\rho$}
	\put(308,110){$\gamma_1$}              
	\put(308,70){$\gamma_2$}
	\put(207,120){$\gamma_3$} 
	\put(297,140){$\gamma_4$}
	\put(297,45){$\gamma_5$}
	\put(122,68){sandclock}   
	\put(120,70){\vector(-1,0){13}}       
	\put(-20,150){tube}   
	\put(00,147){\vector(1,-1){23}} 
        \end{picture}
\caption{\label{fig:hankel}
\emph{Left}: the   ``tube''  and   ``sandclock''
	regions.
\emph{Right}:  the Hankel contour used to estimate
	$e_{h,n}$ (details are given in Figure~\ref{fig:tube-issue}).}
\end{figure}

\smallbreak
{\bf\em Strategy and overview of the results.}
Estimates of the sequence of generating
functions~$(e_h(z))$  within  the disc   of  convergence and  a  tube,
where~$z$  stays  away from the singularity~$\rho$,  are comparatively
easy: they  form the subject of   Section~\ref{sec:away}.  
In particular, Proposition~\ref{prop:away} states that we can always find thinner
and thinner tubes that come arbitrarily close to the singularity~$\rho$
and where the convergence $y_h\to y, e_h\to 0$ is ensured.
The bulk of
the    technical   work is      relative   to   the    sandclock,   in
Section~\ref{sec:singularity}, where Proposition~\ref{prop:bowtie} grants us the
existence of a suitable sandclock for convergence.     We    can        then      develop  in
Section~\ref{sec:estimates_eh}      our       main      approximation:
\begin{equation}\label{eq:mainapintro}
e_h(z)\equiv~y(z)-y_h(z)\approx {2}\frac{1-y}{1-y^h}y^h.
\end{equation}
Here, the symbol~``$\approx$'' is to be loosely interpreted in the sense of
``approximately equal'' ; a formal statement is postponed and summarized in Proposition~\ref{prop:eh_sim}.

The form of the approximation in \eqref{eq:mainapintro} is similar to that in the original  paper by \citet{FlOd82} where trees are ordered. Its justification ranges in  Sections~\ref{sec:away}--\ref{sec:estimates_eh}, which closely    follow  the
general strategy in~\cite{FlOd82};  however,
nontrivial adaptations   are needed, due  to   the presence of P\'olya
terms, so that the problem is no longer of  a ``pure'' iteration type.

We then reap  the  crop in Section~\ref{asy-sec}. There, we use \eqref{eq:gen_tail}, the approximation in \eqref{eq:mainapintro} and the square root singularity of $y$ at $\rho$ to prove the following theorem
relative to the distribution  of  height $H_n$:
\begin{theorem}[Limit law of height] \label{clt}
The height $H_n$ of a random tree taken uniformly from $\cal Y_n$ admits
a \emph{limiting theta distribution}: for any fixed~$x>0$, there holds
\[
\lim_{n\to\infty} \Pr( H_n\ge \lambda^{-1}x \sqrt{n})=\Theta(x), 
\qquad \lambda:=\sqrt{2\rho+2\rho^2y'(\rho^2)},
\]
where \qquad $\ds
\Theta(x):=\sum_{k\ge1} 
(k^2 x^2-2)e^{-k^2 x^2/4}.
$
\end{theorem}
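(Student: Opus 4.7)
The plan is to combine the tail formula $\Pr(H_n>h) = e_{h,n}/y_n$ from~\eqref{eq:gen_tail} with Otter's estimate $y_n\sim\frac{\lambda}{2\sqrt\pi}n^{-3/2}\rho^{-n}$ from Lemma~\ref{lem:rho1} and the approximation~\eqref{eq:mainapintro}, producing $e_{h,n}$ by Cauchy's formula on the contour $\gamma$ of Figure~\ref{fig:hankel}. Setting $h=\lfloor \lambda^{-1}x\sqrt n\rfloor$ and expanding the geometric series in $y^h$, one writes, on a neighborhood of $\rho$,
\begin{equation*}
e_h(z)\;\approx\;2\bigl(1-y(z)\bigr)\sum_{k\ge1}y(z)^{kh}.
\end{equation*}
The tube part of $\gamma$ contributes a quantity of smaller exponential order than $\rho^{-n}$ thanks to Proposition~\ref{prop:away} (which gives $e_h\to 0$ uniformly on the tube), so the asymptotics is entirely governed by the sandclock anchored at $\rho$.

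On the sandclock, I would carry out the standard Hankel substitution $z=\rho(1-t/n)$, producing $z^{-n-1}\sim\rho^{-n-1}e^t$, $dz=-(\rho/n)\,dt$, and $\sqrt{1-z/\rho}=\sqrt{t/n}$. Via the singular expansion~\eqref{singy}, one has $1-y(z)\sim\lambda\sqrt{t/n}$ and $y(z)^{kh}\sim\exp(-k\lambda h\sqrt{t/n})$, and the calibration $h=\lambda^{-1}x\sqrt n$ collapses the exponent to $-kx\sqrt t$. Factoring out $\rho^{-n}n^{-3/2}$, the Cauchy integral becomes a sum of Hankel integrals
\begin{equation*}
J(a)\;:=\;\frac{1}{2i\pi}\int_{\mathcal H}\sqrt{t}\,e^{-a\sqrt t+t}\,dt,
\end{equation*}
one for each $a=kx$. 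Writing $J(a)=-K'(a)$ with $K(a):=\frac{1}{2i\pi}\int_{\mathcal H}e^{-a\sqrt t+t}\,dt$, and expanding $e^{-a\sqrt t}$ termwise with Hankel's identity $\frac{1}{2i\pi}\int_{\mathcal H}t^{-s}e^t\,dt=1/\Gamma(s)$, gives $K(a)=\frac{a}{2\sqrt\pi}e^{-a^2/4}$ and consequently $J(a)=\frac{1}{4\sqrt\pi}(a^2-2)e^{-a^2/4}$. The prefactor $\lambda/(2\sqrt\pi)$ in $y_n$ then combines with the $2\lambda$ prefactor carried out of~\eqref{eq:mainapintro} so that all constants cancel and one obtains
\begin{equation*}
\Pr\bigl(H_n>\lambda^{-1}x\sqrt n\bigr)\;\longrightarrow\;\sum_{k\ge 1}(k^2x^2-2)e^{-k^2x^2/4}\;=\;\Theta(x).
\end{equation*}

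The hard part is not any individual calculation above, but rather the uniform control of remainders along the entire contour together with the interchange of summation and integration. Specifically, one needs (i) a uniform error bound on the approximation~\eqref{eq:mainapintro} over the sandclock, furnished by Proposition~\ref{prop:eh_sim}, (ii) a polynomial remainder in the singular expansion of $y$ that survives exponentiation to the power~$kh\sim k\sqrt n$, and (iii) the standard bound $|z^{-n-1}|=O(\rho^{-n}e^{\operatorname{Re}t})$ along the Hankel contour. Together these ingredients produce the dominated-convergence estimate that justifies both the termwise passage to the limit in $\sum_k y^{kh}$ and the truncation of the contour integral to the sandclock; geometric decay in $k$ comes from $|y(z)|<1$ strictly off the singularity. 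Finally, replacing the real exponent by its integer part introduces an $O(1/\sqrt n)$ perturbation in the exponent of $y^{kh}$ that vanishes in the limit by continuity of $\Theta$.
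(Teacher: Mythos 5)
Your proposal follows essentially the same route as the paper's own proof: Cauchy's formula on a contour split into tube and sandclock/Hankel portions, exponential smallness of the tube contribution via Proposition~\ref{prop:away}, the Hankel substitution $z=\rho(1-t/n)$ with the singular expansion of $y$, reduction to theta-type Hankel integrals, and error control via Proposition~\ref{prop:eh_sim}. The only cosmetic difference is that you expand the geometric series $\sum_{k\ge1}y^{kh}$ before setting up the integral and evaluate each $J(a)$ by writing it as $-K'(a)$, whereas the paper keeps the ratio $\exp(-X\sqrt t)/(1-\exp(-X\sqrt t))$ inside one integral and expands afterward (and evaluates by completing the square rather than by differentiation under the integral); your listed error-management ingredients (i)--(iii) correspond exactly to the paper's ``error management'' paragraph, including the crucial point that the $O(t/n)$ remainder in $y$ survives exponentiation to $kh\sim k\sqrt n$ as a $1+O(t/\sqrt n)$ factor.
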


Our formal version of approximation in \eqref{eq:mainapintro} (Proposition~\ref{prop:eh_sim}) is also strong enough to grant us access to a limit law for the height $H_n$:

\begin{theorem}[Local limit law of height] \label{llt}
The distribution of the height $H_n$ of a random tree taken uniformly from $\cal Y_n$ admits
a local limit: for $x$ in a compact set of $\R_{>0}$ 
and $h=\lambda^{-1}x\sqrt{n}$
an integer, there holds uniformly
\[
\Pr(H_n=h)\sim\frac{\lambda}{ \sqrt{n}}\vartheta(x),\]
where\qquad $\ds \vartheta(x)=-\Theta'(x)=
(2x)^{-1}\sum_{k\ge1} 
(k^4 x^4-6k^2 x^2)e^{-k^2 x^2/4}$.
\end{theorem}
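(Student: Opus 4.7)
\textbf{Proof proposal for Theorem~\ref{llt}.} The plan is to mimic the proof of Theorem~\ref{clt}, but starting from the exact identity
\[
\Pr(H_n=h)=\Pr(H_n>h-1)-\Pr(H_n>h)=\frac{[z^n]\bigl(e_{h-1}(z)-e_h(z)\bigr)}{y_n},
\]
so that the whole analysis is reduced to the asymptotic evaluation of the single Cauchy integral
$\frac{1}{2i\pi}\int_\gamma\bigl(e_{h-1}(z)-e_h(z)\bigr)\,z^{-n-1}\,dz$
along the Hankel-type contour of Figure~\ref{fig:hankel}, combined with Otter's estimate $y_n\sim(\lambda/2\sqrt\pi)\,n^{-3/2}\rho^{-n}$ from Lemma~\ref{lem:rho1}.

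The first step is to substitute the approximation~\eqref{eq:mainapintro} (formalized as Proposition~\ref{prop:eh_sim}) into the difference. An elementary algebraic manipulation, using that
\[
\frac{y^{h-1}}{1-y^{h-1}}-\frac{y^h}{1-y^h}=\frac{y^{h-1}(1-y)}{(1-y^{h-1})(1-y^h)},
\]
gives
\[
e_{h-1}(z)-e_h(z)\approx \frac{2(1-y(z))^{2}\,y(z)^{h-1}}{\bigl(1-y(z)^{h-1}\bigr)\bigl(1-y(z)^{h}\bigr)}.
\]
This is the local analogue of the tail estimate used for Theorem~\ref{clt}, and it is exactly the formal derivative (with respect to a continuous height parameter) of the expression driving the central limit theorem. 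The key point is that the extra factor $(1-y)$ pulled down by taking a discrete difference produces the additional power of the scaling parameter that leads from $\Theta(x)$ to $\vartheta(x)=-\Theta'(x)$.

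The second step is the singular evaluation. Using the singular expansion $1-y(z)\sim\lambda\sqrt{1-z/\rho}$ from~\eqref{singy}, introduce the local variable $u=\lambda\sqrt{1-z/\rho}$ and, on the Hankel contour, the rescaling $u=\lambda t/\sqrt n$, so that $z=\rho(1-t^{2}/n)$, $dz=-(2\rho t/n)\,dt$, and $z^{-n-1}\sim\rho^{-n-1}e^{t^{2}}$. Because $h=\lambda^{-1}x\sqrt n$, one has $hu\to xt$, and $y(z)^{h}\to e^{-xt}$ uniformly on the rescaled contour (with $h-1$ indistinguishable from $h$ to the order needed). Expanding the geometric series
\[
\frac{y^{h-1}}{(1-y^{h-1})(1-y^{h})}=\frac{1}{y}\sum_{k\ge1}k\,y^{kh}\bigl(1+o(1)\bigr),
\]
and integrating term by term against $e^{t^{2}}$ via the classical Hankel formula $\frac{1}{2i\pi}\int_{\mathcal H}t^{p}e^{t^{2}-kxt}\,dt$ (evaluated after the shift $t\mapsto t+kx/2$, which pulls out the factor $e^{-k^{2}x^{2}/4}$), yields an expression of the form $\rho^{-n}n^{-2}\sum_{k\ge1}P_{k}(x)e^{-k^{2}x^{2}/4}$, where $P_{k}(x)$ is exactly the polynomial $\tfrac{1}{2}k^{4}x^{3}-3k^{2}x$ needed to reconstruct $\vartheta(x)$. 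Dividing by $y_n$ from Lemma~\ref{lem:rho1} absorbs one factor of $n^{-3/2}$ and converts $\rho^{-n}n^{-2}$ into $(\lambda/\sqrt n)\vartheta(x)$, giving the claimed asymptotic.

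The main obstacle is the uniform error control. Proposition~\ref{prop:eh_sim} guarantees the approximation~\eqref{eq:mainapintro} in the sandclock and tube regions, but since we are now computing a \emph{difference} $e_{h-1}-e_h$ of order $n^{-1/2}$ times $e_h$ itself, the relative error of the approximation must be pushed below $n^{-1/2}$ along the whole contour, not just $o(1)$. Thus the estimates from Sections~\ref{sec:away}--\ref{sec:estimates_eh} have to be reused at a finer level to justify differentiating (in $h$) the approximation, and to ensure that the residual contribution coming from the arcs of the Hankel contour outside the sandclock remains exponentially small. Once this uniformity is established, the term-by-term integration and the identification with $\vartheta(x)=-\Theta'(x)$ are essentially bookkeeping.
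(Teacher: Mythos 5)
Your proposal follows essentially the same route as the paper: expressing $\Pr(H_n=h)$ through $[z^n](e_{h-1}-e_h)$, substituting $\wh e_{h-1}-\wh e_h = 2(1-y)^2 y^{h-1}/\bigl[(1-y^{h-1})(1-y^h)\bigr]$ (the paper's~\eqref{eq:gh_approx}), and estimating the Cauchy integral on the same Hankel-type contour after the rescaling near~$\rho$. Two remarks.

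First, the ``main obstacle'' you flag---that the difference $e_{h-1}-e_h$ is of smaller order than $e_h$, so the relative error must be pushed below~$n^{-1/2}$---is not something that still ``has to be reused at a finer level'': Proposition~\ref{prop:eh_sim} already supplies exactly the required auxiliary estimate, namely $|R_h-R_{h+1}|\le K'/h$. Since $h\asymp\sqrt n$ in the scaling $h=\lambda^{-1}x\sqrt n$, this gives $|R_h-R_{h+1}|=O(1/\sqrt n)$, which is precisely what makes the replacement of $e_{h-1}-e_h$ by $\wh e_{h-1}-\wh e_h$ legitimate at the $n^{-1/2}$ scale. That is why the difference bound was included as a second conclusion of Proposition~\ref{prop:eh_sim}; your write-up reads as if this were an open item, when in fact it is the key lemma already proved.

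Second, where you propose to expand $\tfrac{y^{h-1}}{(1-y^{h-1})(1-y^{h})}$ as a geometric series and integrate term by term against the Hankel kernel, the paper takes the slightly shorter path of defining
\[
J_1(X)=\frac{1}{2i\pi}\int_{\mathcal L}\frac{e^{-X\sqrt t}}{\bigl(1-e^{-X\sqrt t}\bigr)^2}\,t\,e^t\,dt,
\]
observing by differentiation under the integral sign that $J_1(X)=-J'(X)$, and then reusing the closed form of $J(X)$ already obtained in the proof of Theorem~\ref{clt}; this delivers $\vartheta=-\Theta'$ automatically without redoing the Gaussian integrals. Your direct term-by-term computation leads to the same polynomial $\tfrac12 k^4x^3-3k^2x$ per term (your constants of proportionality $2\lambda^2$ and $1/(4\sqrt\pi)$ are left implicit, but they cancel correctly against $y_n$ as in the paper). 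Both are fine; the paper's shortcut is just a bit cleaner.
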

Note that the results above 
appear to parallel the weak limit theorem and 
and local  limit laws known  in
the planar case~\cite{FlGaOdRi93}. 
Further theorems about the asymptotics of (integer) moments of $H_n$, together with moderate and large deviations may also be extracted from \eqref{eq:mainapintro} ; we only state the one for the moments, the others may be found in Section~\ref{asy-sec}. 
\begin{theorem}[Moments of height]\label{thm:moments}
Let~$r\ge1$. The $r$\emph{th} moment of  height~$H_n$ satisfies
\begin{equation}\label{momh}
\E{H_n}\sim \frac 2 \lambda \sqrt{\pi n}
\qquad\mbox{and}\qquad
\mathbb{E}[H_n^r]\sim r(r-1)\zeta(r) \Gamma(r/2) \left(\frac{2}{\lambda}\right)^r n^{r/2},\quad r\ge 2
.\end{equation}
\end{theorem}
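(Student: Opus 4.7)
The plan is to start from the tail-sum identity
\begin{equation*}
\mathbb{E}[H_n^r] = \sum_{h \geq 0}\bigl[(h+1)^r - h^r\bigr]\Pr(H_n > h),
\end{equation*}
valid for every integer $r\ge1$, and to evaluate the sum by splitting the range of $h$ into a central part and two tails relative to $\sqrt{n}$. Setting $x = \lambda h/\sqrt{n}$, I would fix $\delta>0$ small and $M>0$ large (to be let go to $0$ and $\infty$ respectively at the end), and treat the ranges $h<\delta\sqrt{n}$, $\delta\sqrt{n}\le h\le M\sqrt{n}$, and $h>M\sqrt{n}$ separately. On the central range, the approximation \eqref{eq:mainapintro} of Proposition~\ref{prop:eh_sim}, inserted into Cauchy's integral \eqref{eq:cauchy} along the Hankel contour of Figure~\ref{fig:hankel}, should upgrade Theorem~\ref{clt} into a \emph{uniform} statement $\Pr(H_n>h)=\Theta(x)\,(1+o(1))$. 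A Riemann-sum argument then yields
\begin{equation*}
\sum_{\delta\sqrt n\le h\le M\sqrt n}\!\bigl[(h+1)^r-h^r\bigr]\Pr(H_n>h)
\;\sim\; \Bigl(\tfrac{\sqrt n}{\lambda}\Bigr)^{\!r}\, r\!\int_\delta^M\! x^{r-1}\Theta(x)\,dx.
\end{equation*}

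For the small range $h<\delta\sqrt n$, I would bound the summand crudely by $r(h+1)^{r-1}\cdot 1$ to get a contribution of order $\delta^r n^{r/2}$, which becomes negligible after normalisation as $\delta\to0$. For the large range $h>M\sqrt{n}$, the moderate and large deviation bounds (Theorems~\ref{thm:moderate} and~\ref{thm:large_deviations}) give $\Pr(H_n>h)\le C\exp(-c\lambda^2 h^2/n)$ on an interval of the form $\delta\sqrt n\le h\le C'\sqrt{n\log n}$ and exponential decay in $h$ beyond it; the resulting tail $\sum_{h>M\sqrt n} h^{r-1}\Pr(H_n>h)$ is then bounded by $n^{r/2}\int_M^\infty u^{r-1}e^{-cu^2}\,du$, which vanishes as $M\to\infty$. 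Letting $\delta\to0$ and $M\to\infty$ after dividing by $n^{r/2}$ reduces the theorem to evaluating the integral $I_r:=r\int_0^\infty x^{r-1}\Theta(x)\,dx$.

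For $r\ge 2$, absolute convergence lets one exchange the sum defining $\Theta$ with the integral; the substitution $u=k^2x^2/4$ in the $k$th term gives $\int_0^\infty x^{r-1}(k^2x^2-2)e^{-k^2x^2/4}\,dx=2^r(r-1)\Gamma(r/2)/k^r$, and summing over $k$ produces $I_r=r(r-1)\zeta(r)\Gamma(r/2)\,2^r$, matching the stated formula. The case $r=1$ is genuinely different: each term of $\Theta$ integrates to $0$ while $\sum 1/k$ diverges, so the term-by-term route is void. Instead, I would write $\Theta(x)=-4yf'(y)-2f(y)$ with $y=x^2/4$ and $f(y)=\sum_{k\ge1}e^{-k^2y}$, apply the Jacobi identity $1+2f(y)=\sqrt{\pi/y}\bigl(1+2f(\pi^2/y)\bigr)$ to control the behaviour at $0$, and integrate by parts; the boundary term $[u^{1/2}f(u)]_{u=0}=\sqrt\pi/2$ produces exactly $2\sqrt\pi$, the remaining integrals cancel, and one obtains $\int_0^\infty\Theta(x)\,dx=2\sqrt{\pi}$, giving $\mathbb{E}[H_n]\sim(2/\lambda)\sqrt{\pi n}$.

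The main obstacle is the uniform upgrade of Theorem~\ref{clt} on the central range, namely an estimate $\Pr(H_n>h)=\Theta(\lambda h/\sqrt n)(1+\varepsilon_n(h))$ with $\varepsilon_n(h)\to0$ uniformly for $h\in[\delta\sqrt n,M\sqrt n]$. This must be extracted from the error terms in~\eqref{eq:mainapintro}, both in the sandclock near~$\rho$ and in the tube away from it, and then propagated through the Hankel integration~\eqref{eq:cauchy}. The radius of the contour near $\rho$ must be chosen (classically of order $1/n$) compatibly with the whole range of~$h$, so that the saddle-point behaviour of $y^h$ can be matched to the singular expansion of $y$ uniformly; this compatibility is what makes the Riemann-sum step legitimate and, ultimately, delivers the moment asymptotics.
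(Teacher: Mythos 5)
Your plan is correct, and your two integral computations check out: the substitution $u=kx/2$ does give $\int_0^\infty x^{r-1}(k^2x^2-2)e^{-k^2x^2/4}\,dx = 2^r(r-1)\Gamma(r/2)k^{-r}$ for $r\ge2$, and the Jacobi/integration-by-parts argument for $r=1$ does produce $\int_0^\infty\Theta(x)\,dx=2\sqrt\pi$ with the two non-boundary integrals cancelling exactly. But the route is genuinely different from the one in the paper.

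The paper never touches the probability distribution directly. It works with the ``moment generating function'' $\hat M_r(z)=2(1-y)^2\sum_{h\ge1}h^r\,y^h/(1-y^h)^2$, obtained by replacing $g_h=e_{h-1}-e_h$ by the approximation~\eqref{eq:gh_approx} in $\sum_h h^r g_h(z)$. Setting $y=e^{-t}$ turns this into a harmonic sum; a Mellin transform with $F^\star_r(s)=\zeta(s-r)\zeta(s-1)\Gamma(s)$ yields the singular expansion of $\hat M_r$ at $z=\rho$ (the simple pole at $s=r+1$ supplies the main term for $r\ge2$, a double pole at $s=2$ for $r=1$), and singularity analysis then extracts $[z^n]\hat M_r(z)$ in one pass. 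This completely avoids the two technical points you identify as obstacles in your approach: (a) the need to upgrade Theorem~\ref{clt} to a uniform estimate over the entire central range $h\in[\delta\sqrt n,M\sqrt n]$ (Mellin produces a \emph{uniform} singular estimate of $\hat M_r(z)$ in a $\Delta$-domain automatically), and (b) stitching the tail together from Theorems~\ref{thm:moderate} and~\ref{thm:large_deviations}. Your large-$h$ argument, as written, also leaves a gap between the moderate-deviation regime $h\le A\lambda^{-1}\sqrt{n\log n}$ and the large-deviation regime $h\ge\delta n$: monotonicity of the tail bridges it only if the constant $A$ in Theorem~\ref{thm:moderate} can be taken larger than $\sqrt{2r+4}$, which the theorem as stated does not give you for free, so this part would need a supplementary argument or a direct saddle-point bound on $e_h(r)/r^n$. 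In short, both approaches reach the same integrals $r\int_0^\infty x^{r-1}\Theta(x)\,dx$ (which your computation evaluates correctly), but the paper's GF/Mellin route turns a delicate patching of probability estimates into routine singularity analysis.
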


Finally, in Section~\ref{sec:diam}, we analyse the diameter of unrooted trees using a reduction to the rooted tree case. There, we provide theorems similar to Theorem~\ref{clt}, \ref{llt} and~\ref{thm:moments}, i.e., a weak limit theorem, a local limit law, and asymptotics for the moments. The precise definition of the model of unrooted trees, and the statement of the results are postponed until Section~\ref{sec:diam}.

\section{\bf Convergence away from the singularity in tubes}\label{sec:away}

Our aim in  this section\footnote{In what  follows, we freely omit the
arguments of $y(z)$, $e_h(z)$, $y_h(z)$ \dots, whenever they  are taken at $z$.
(We reserve $h$ for height and~$n$ for size, so that no ambiguity should arise:
$y_h$ means $y_h(z)$, whereas $y_n$ invariably represents $[z^n]y(z)$.)}   is to
extend  the  domain where    $e_h$ is analytic     beyond the disc  of
convergence $|z|\le  \rho$,  when $z$  stays  in a ``tube''  $\mathcal
T(\mu,\eta)$ as defined in
\eqref{eq:tube} and is thus away from $\rho$. The main result is 
summarized by Proposition~\ref{prop:away},
at the  end of this  section.  Its proof  relies on the combination of
two   ingredients:    first,    the    fact,  expressed   by
Lemma~\ref{lem:bound1},   that the  $e_h$ converge  to~0, equivalently,
$y_h\to y$, in the  closed  disc of  radius~$\rho$ (this property is the
consequence of the $n^{-3/2}$ subexponential factor in the asymptotic form of~$y_n$,
which implies convergence of $y(\rho)$); second,  a general
criterion for convergence  of  the $e_h$  to~0, which  is expressed by
Lemma~\ref{lem:criterion}.   The criterion implies  in essence that the
convergence domain is  an open set,  and  this fact provides  the basic
analytic continuation of the generating functions of interest.

\begin{lemma}\label{lem:bound1}
For all $z$ such that  $|z|\le \rho$, and $h\ge  1$, one has 
\[
|e_h(z)|
\le \frac 1 {\sqrt h}\pran{\frac{|z|}{\rho}}^h.
\]
\end{lemma}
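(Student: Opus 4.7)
The plan is to reduce the bound to a tail estimate for the convergent series $y(\rho)=\sum_{n\ge 1}y_n\rho^n$, exploiting the fact that the coefficients of $e_h(z)$ vanish for small indices. A binary tree with $n$ external nodes has height at most $n-1$ (the extremal case being a path-like tree with one external child branching off every internal node), so $e_{h,n}=0$ whenever $n\le h+1$. Combined with the trivial bound $e_{h,n}\le y_n$ (trees of height greater than $h$ form a subclass of $\mathcal Y_n$), the triangle inequality applied to the series $e_h(z)=\sum_n e_{h,n}z^n$ yields, for $|z|\le\rho$,
$$|e_h(z)|\le\sum_{n\ge h+2}y_n|z|^n.$$

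Next, I extract the desired geometric factor by writing $|z|^n=\rho^n(|z|/\rho)^n$ and using $(|z|/\rho)^n\le(|z|/\rho)^h$ for $n\ge h$ (which holds because $|z|/\rho\le 1$), obtaining
$$|e_h(z)|\le\left(\frac{|z|}{\rho}\right)^h\sum_{n\ge h+2}y_n\rho^n.$$
It therefore suffices to prove the tail estimate $\sum_{n\ge h+2}y_n\rho^n\le h^{-1/2}$.

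For this last step, Otter's asymptotic~\eqref{otter} supplied by Lemma~\ref{lem:rho1} furnishes a uniform bound $y_n\rho^n\le K\,n^{-3/2}$ valid for all $n\ge 1$; since the leading constant $\lambda/(2\sqrt\pi)\approx 0.319$ is comfortably below $1/2$, and the finitely many ``transient'' values of $y_n\rho^n\,n^{3/2}$ at small $n$ also stay below $1/2$ by direct inspection, one may take $K\le 1/2$. The integral comparison $\sum_{n\ge h+2}n^{-3/2}\le\int_{h+1}^\infty x^{-3/2}\,dx=2(h+1)^{-1/2}$ then yields $\sum_{n\ge h+2}y_n\rho^n\le 2K(h+1)^{-1/2}\le h^{-1/2}$, completing the proof.

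The main point of delicacy is obtaining the clean absolute constant $1$ in the statement, which forces the inequality $y_n\rho^n\le \tfrac12 n^{-3/2}$ to hold \emph{uniformly} in $n$: this in turn relies on the refined $O(1/n)$ error term in Otter's estimate (to control the asymptotic regime) together with a finite numerical check of the small-$n$ cases. A slightly weaker bound $C\,h^{-1/2}(|z|/\rho)^h$ with an arbitrary constant $C$ would follow from exactly the same argument and would suffice for all subsequent applications.
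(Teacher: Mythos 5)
Your argument is essentially identical to the paper's: bound $|e_h(z)|$ by the tail $\sum_{n>h} y_n|z|^n$, pull out the geometric factor $(|z|/\rho)^h$, invoke the numerically verified refinement $y_n\rho^n\le\tfrac12 n^{-3/2}$, and close via the integral comparison $\sum n^{-3/2}\le\int t^{-3/2}\,dt$. The only (harmless) cosmetic differences are that you start the tail sum at $n\ge h+2$ rather than $n\ge h+1$ and correspondingly take $\int_{h+1}^\infty$ instead of $\int_h^\infty$.
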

\begin{proof}
To have height at  least $h$, a tree  needs  at least $h+1$  nodes, so
that $|e_h(z)| \le \sum_{n>h}y_n |z|^{n}.$ 
We first note an  easy  numerical refinement  of~\eqref{otter},  namely,  $y_n
\le\frac12 \rho^{-n} n^{-3/2}$,   obtained by combining the  first few
exact values                of~$y_n$     with    the   asymptotic
estimate~\eqref{otter}. (See~\cite{FlGrKiPr95}  for  a detailed proof
strategy in the case of a similar but harder problem.)
This implies
\[
|e_h(z)|\le \frac12 \left(\frac{|z|}{\rho}\right)^h \sum_{n>h} \frac{1}{n^{3/2}}
\le \frac12 \left(\frac{|z|}{\rho}\right)^h \int_h^\infty \frac{dt}{t^{3/2}}
=\left(\frac{|z|}{\rho}\right)^h \frac{1}{\sqrt{h}},
\]
and the statement results.
\end{proof}

We now  devise a criterion for   the convergence of $e_h(z)$  to zero.
This  criterion, adapted   from~\cite[Lemma~1]{FlOd82}, is  crucial in
obtaining   extended convergence   regions,    both near  the   circle
$|z|=\rho$ (in  this  section) and  near  the  singularity $\rho$  (in
Section~\ref{sec:singularity}).

\begin{lemma}[Convergence criterion]\label{lem:criterion}
Define the domain\footnote{This domain  will sometimes be referred to as the
``cardioid-like'' domain, as it contains the $\{|z|\le \rho\}$ punctured at~$\rho$ (Proposition~\ref{prop:away})
and has a cusp at $z=\rho$, associated to the square root singularity of~$y(z)$ at~$\rho$.}
\begin{equation}\label{defd}
\mathcal D:=\{z: |y(z)|<1\}.
\end{equation}
Assume that $z$ satisfies the conditions $z\in\cal D$ and $|z|<\sqrt{\rho}$.
The  sequence $\{|e_h(z)|, h\ge  0\}$ converges  to 0  if and  only if
there exist an  integer  $m\ge 1$ and real  numbers $\alpha,\beta\in(0,1)$,
such that the following three conditions are simultaneously met:
\begin{equation}\label{eq:criterion}
|e_m| < \alpha, \qquad |y|+\frac\alpha 2< \beta, 
\qquad \alpha\beta+\pran{\frac{|z|^2}\rho}^m< \alpha.
\end{equation}
Furthermore, if \eqref{eq:criterion} holds then, for some constant $C$
and $\beta_0\in(0,1)$, one has the geometric convergence
\begin{equation}\label{crit2}
|e_h| \le C h
\beta_0^h,
\end{equation}
for   all    $h\ge   m$.
\end{lemma}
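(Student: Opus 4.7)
The plan is to treat the two implications of the equivalence separately, exploiting the basic recurrence $e_{h+1} = y\,e_h - e_h^2/2 + e_h(z^2)/2$ obtained by rearranging \eqref{eq:rec_eh}. Both standing hypotheses are crucial: the condition $z \in \mathcal{D}$ gives $|y(z)| < 1$, which tames the linearized multiplier $y$ in the iteration, while $|z| < \sqrt{\rho}$ ensures $|z|^2 < \rho$, so that Lemma \ref{lem:bound1} applied at $z^2$ supplies an \emph{a priori} geometrically decaying bound for the P\'olya term $e_h(z^2)/2$.

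The necessity (``only if'') is essentially a compatibility check. Assuming $|e_h(z)| \to 0$, one picks $\alpha \in (0,\,2(1-|y|))$ so that $|y| + \alpha/2 < 1$, then any $\beta \in (|y|+\alpha/2,\,1)$, and finally $m$ large enough that simultaneously $|e_m| < \alpha$ (by the convergence hypothesis) and $(|z|^2/\rho)^m < \alpha(1-\beta)$ (since $|z|^2/\rho < 1$). Each of these quantities tends to zero with $m$, so both constraints can be met.

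For the sufficiency, the heart is a simple induction showing that, under \eqref{eq:criterion}, $|e_h| \leq \alpha$ for every $h \geq m$. Triangle inequality on the recurrence gives $|e_{h+1}| \leq (|y| + |e_h|/2)\,|e_h| + |e_h(z^2)|/2$; bounding $|y| + |e_h|/2 \leq |y| + \alpha/2 < \beta$ by the second condition and applying Lemma \ref{lem:bound1} to get $|e_h(z^2)| \leq (|z|^2/\rho)^h \leq (|z|^2/\rho)^m$ for $h \geq m$, one arrives at $|e_{h+1}| < \alpha\beta + (|z|^2/\rho)^m < \alpha$ by the third condition. With this uniform bound in hand, the same recurrence yields the linear inequality $|e_{h+1}| \leq \beta\,|e_h| + q^h/2$ with $q := |z|^2/\rho \in (0,1)$; iterating this inhomogeneous recursion and setting $\beta_0 := \max(\beta,q)$ delivers the announced estimate $|e_h| \leq C\,h\,\beta_0^h$, the linear factor~$h$ accommodating the possible resonance $\beta = q$.

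The main obstacle is that the recurrence for $e_h$ is not a ``pure'' polynomial iteration: the P\'olya term $e_h(z^2)/2$ acts as an inhomogeneous forcing at every step, an extra difficulty absent from the planar analogue of \cite{FlOd82}. The key observation unlocking the argument is that, since $|z|^2 < \rho$, the point $z^2$ lies strictly inside the disc of convergence, so Lemma \ref{lem:bound1} controls this forcing by a sequence decaying at the explicit rate $(|z|^2/\rho)^h$, \emph{independently} of any hypothesis on $e_h$ at $z$ itself. The role of the third condition in \eqref{eq:criterion} is precisely to ensure that this extra forcing can be absorbed into the contractive behaviour of the linearized iteration $e_h \mapsto y\,e_h$ while still preserving $|e_h| \leq \alpha$.
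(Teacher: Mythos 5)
Your proof is correct and follows essentially the same route as the paper: the necessity direction picks $\alpha$, $\beta$, and then a sufficiently large $m$, and the sufficiency direction combines the triangle inequality on the recurrence \eqref{eq:rec_eh} with Lemma~\ref{lem:bound1} (applied at $z^2$) to propagate the bound $|e_h|\le\alpha$ for $h\ge m$ and then unfolds the resulting affine recursion $|e_{h+1}|\le\beta|e_h|+O(q^h)$, with the linear factor $h$ absorbing the possible resonance $\beta=q$. The only cosmetic difference is the order in which $\alpha$ and $\beta$ are fixed in the necessity step (the paper sets $\beta$ first, then $\alpha=\beta-|y|$), which changes nothing.
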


\begin{proof}
$(i)$~\emph{Convergence implies that \eqref{eq:criterion} is satisfied, for some$~m$.}
Assume        that~$z\in      \mathcal  D$,   $|z|<\sqrt{\rho}$,
and~$e_h(z)\to0$ as $h\to\infty$.  Then choose $\beta$  such that $|y|<\beta<1$.  This
gives     a              possible             value      for~$\alpha$,
say,~$\alpha=(\beta-|y|)$.   Choose~$m_0$  such  that, for
all~$\mu>m_0$, one  has $|e_\mu|<\alpha$; then choose~$m_1$
large enough, so that
the third  condition of~\eqref{eq:criterion} is  satisfied.  The three
conditions of~\eqref{eq:criterion}    are  now  satisfied  by   taking
$m=\max(m_0,m_1)$.

\smallskip

$(ii)$~\emph{Condition \eqref{eq:criterion} implies convergence and the bound~\eqref{crit2}.} 
Conversely, assume the three conditions in (\ref{eq:criterion}),
for some value~$m$. Then, they also hold for $m+1$. Indeed, recalling (\ref{eq:rec_eh}),
we see that, for any $h\ge 1$,
\begin{equation}\label{eq:bound_eh}
|e_{h+1}| ~\le~ {|e_h|} \pran{|y|+ \frac{|e_h|}2} + \frac{|e_h(z^2)|}2 ~\le~ |e_h| \pran{|y|+ \frac{|e_h|}2} + \pran{\frac {|z|^2}{\rho}}^h,
\end{equation}
where the P\'olya term  involving $|e_h(z^2)|$ has been bounded  using
Lemma~\ref{lem:bound1}.        The  hypotheses
of~\eqref{eq:criterion}  together with \eqref{eq:bound_eh}  above taken at $h=m$,
yield the inequality $|e_{m+1}|<\alpha$.  
So, once the conditions~\eqref{eq:criterion} hold for some $m$,
they hold for~$m+1$; hence, for all $h\ge m$. 

The fact that, under these conditions, there is convergence, $e_h\to0$,
now results from unfolding the recurrence~\eqref{eq:rec_eh}:
we find, for all $h\ge m$,
\[
|e_{h+1}|
\le \beta^{h-m+1} |e_m| + \sum_{i=0}^{h-m} \beta^i \pran{\frac{|z|^2}\rho}^{h-i}
\le \beta^{h-m+1} |e_m| + h \max\left\{\beta, \pran{\frac{|z|^2}\rho}\right\}^h,\]
where Lemma~\ref{lem:bound1} has been used again to bound the P\'olya term.
The additional assertion that $|e_h|\le Ch
\beta_0^h$ in~\eqref{crit2} finally follows from choosing $\beta_0:=\max(\beta,|z|^2/\rho)$.
\end{proof}

We can now state the main convergence result of this section:

\begin{proposition}[Convergence in ``tubes'']\label{prop:away}
For  \emph{any} angle $\eta>0$, there  exists  a tube $\mathcal T(\mu,\eta)$
with width $\mu>0$,    such that $|e_h(z)| \to 0$, as
$h\to\infty$, uniformly for $z$ in $\mathcal T(\mu,\eta)$.
\end{proposition}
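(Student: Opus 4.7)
The plan is to reduce to the pointwise convergence criterion of Lemma~\ref{lem:criterion} applied along the compact arc
\[
A_\eta := \{z \in \mathbb{C} : |z| = \rho,\ |\arg z| \ge \eta\},
\]
and then to inflate the pointwise conclusions to a uniform statement on an open tube, using continuity and compactness.

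First, I would verify that the cardioid-like domain $\mathcal D$ contains the punctured closed disk $\{|z| \le \rho\} \setminus \{\rho\}$. For $|z| < \rho$, the positivity of the coefficients $y_n$ together with the monotonicity of $y$ on $[0,\rho]$ gives $|y(z)| \le y(|z|) < y(\rho) = 1$. For $|z| = \rho$ with $z \ne \rho$, the support of $(y_n)_{n \ge 1}$ contains consecutive integers (since $y_1 = y_2 = 1$) and is therefore aperiodic; the strict version of the triangle inequality then yields $|y(z)| < y(\rho) = 1$. As a by-product, $y$ extends analytically to a neighborhood of $A_\eta$ via~(\ref{eq:yz}), because the radicand $1-2z-y(z^2)$ vanishes on the circle $|z| = \rho$ only at $z = \rho$.

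Next, fix $\eta > 0$ and set $\beta^{\ast} := \sup_{z \in A_\eta} |y(z)|$, which is strictly less than $1$ by the preceding step and compactness of $A_\eta$. For each $z_0 \in A_\eta$, the preconditions $z_0 \in \mathcal D$ and $|z_0| = \rho < \sqrt{\rho}$ of Lemma~\ref{lem:criterion} are met. I would pick $\beta \in (\beta^{\ast}, 1)$ and then $\alpha \in (0, 2(\beta - \beta^{\ast}))$, so that the second condition of~(\ref{eq:criterion}) is satisfied uniformly on $A_\eta$. Since Lemma~\ref{lem:bound1} gives $|e_h(z)| \to 0$ uniformly on $|z| \le \rho$, and $(|z|^2/\rho)^m \le \rho^m \to 0$ there as well, a single integer $m$ can be chosen so that the first and third conditions of~(\ref{eq:criterion}) hold simultaneously at \emph{every} $z_0 \in A_\eta$.

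Finally, each of the three strict inequalities in~(\ref{eq:criterion}) is a continuous condition in $z$: the partial sum $e_m$ is a polynomial, and $y$ extends continuously to a neighborhood of $A_\eta$. Hence each inequality persists on an open neighborhood of every point of $A_\eta$; by compactness, the intersection of these neighborhoods contains a tube $\mathcal T(\mu, \eta)$ of some width $\mu > 0$. The geometric bound~(\ref{crit2}) of the convergence criterion then gives $|e_h(z)| \le Ch \beta_0^h$ uniformly on $\mathcal T(\mu, \eta)$ for some $\beta_0 \in (0,1)$ and $C > 0$ not depending on $z$, yielding the claimed uniform convergence. The main obstacle is the first step, namely the strict inequality $|y(z)| < 1$ on the pierced boundary circle, which both requires an aperiodicity argument and underpins the analytic continuation of $y$ through $\{|z| = \rho\} \setminus \{\rho\}$; the remaining steps are essentially a careful bookkeeping exercise, paralleling the analogous argument in the planar case of~\cite{FlOd82}.
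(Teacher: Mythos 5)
Your proposal is correct and follows essentially the same route as the paper: establish $|y|<1$ on the punctured circle (the paper invokes the ``Daffodil Lemma'', you invoke aperiodicity---the same idea), apply the convergence criterion of Lemma~\ref{lem:criterion} pointwise along the compact arc $A_\eta$, and inflate to an open tube via continuity and compactness. The only organizational difference is that you fix a single uniform $m$, $\alpha$, $\beta$ on the arc at the outset (using the uniform bound of Lemma~\ref{lem:bound1}) and then extend by continuity, whereas the paper first chooses local discs $\delta_1(z)$ with local thresholds $M(z)$ and then passes to a finite subcover and $m=\max_j M(z_j)$; both variants are correct and rest on identical ingredients.
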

\begin{proof}
We thus start from a fixed~$\eta$, assumed to be suitably small.
If we exclude a small sector of opening angle $2\eta$ around the
positive real axis, then the quantity,
\[
\lambda_0:=\sup \left\{\, |y(z)|;\quad |z|=\rho,~|\arg(z)|\ge \eta\, \right\},
\]
satisfies $\lambda_0<1$: this results from  the strong triangle inequality
(see also the ``Daffodil Lemma'' of~\cite{FlSe09})  and
the    fact  that  $y(\rho e^{i\theta})$     is  a continuous function
of~$\theta$. (By the argument introduced in the proof
of Lemma~\ref{lem:rho1}, the function $y(z)$ is analytic 
at all points of $|z|=\rho$, $z\not=\rho$, hence continuous.)
Fix then~$\epsilon$ by $\lambda_0=1-2\epsilon$. By continuity of~$y$ again,
for each~$z$
on the circle of radius~$\rho$ satisfying $|\arg(z)|\ge\eta$, 
there exists a small \emph{open} disc~$\delta(z)$, centred at~$z$
and such that $|y(\zeta)|<1-\epsilon$ for all $\zeta\in\delta(z)$.
From now on, we assume that the discs~$\delta(z)$ are taken 
small enough, so that they are entirely contained in the larger disc $\{w\in\mathbb{C} : |w|<\sqrt{\rho}\}$.

We   can   then  make    use   of   the  convergence    criterion   of
Lemma~\ref{lem:criterion}, supplemented  once more by a continuity argument.
In    the      notations   of~\eqref{eq:criterion},     choose   first
$\alpha=\epsilon$,  then  $\beta=1-\epsilon/2$.  For all  sufficiently
large~$m$,    say   $m\ge     \nu$,    the     last   two   conditions
of~\eqref{eq:criterion} are   satisfied. Then, since the  $e_h(z)$ are
analytic (hence  continuous)  at every point of the unit  circle
punctured  at~$\rho$,  there exists,    around each~$z$  on $|z|=\rho$
with~$|\arg(z)|\ge        \eta$,     a     small          open    disc
$\delta_1(z)\subseteq\delta(z)$  and  an  integer~$M(z)$   such that
$|e_m|<\alpha$  for all $m\ge M(z)$. We  may also freely assume that
$M(z)>\nu$.

Finally, by \emph{compactness} of the arc $\{\rho e^{i\theta}\}$ 
defined by $|\theta|\ge \eta$,
there exists a covering of the arc
by a \emph{finite} collection of small discs, say~$\{\delta_1(z_j)\}_{j=1}^r$.
The union of these small discs must then contain a tube
 of angle~$\eta$ and width~$\mu>0$. 
By design, in this tube, all three conditions of the convergence   criterion   of
Lemma~\ref{lem:criterion} (Equation~\eqref{eq:criterion}) are now satisfied,
with $m=\max_{j=1}^r M(z_j)$.
\end{proof}

\section{\bf Convergence near the singularity in a sandclock}\label{sec:singularity}

We now  focus on the behaviour  of $e_h(z)$ in  a ``sandclock'' around 
the  singularity.   When $z$ approaches   $\rho$,  the quantity $|y|$ is  no longer
bounded away from   1, so that  the  criterion   for convergence obtained earlier
(Lemma~\ref{lem:criterion}) cannot  be  used directly.  We then need
to proceed in two stages: first, we prove in Subsection~\ref{inisand} 
that, in a suitable sandclock, the initial terms decay ``enough'';
next, in Subsection~\ref{finsand}, we  establish the existence of a sandclock 
where convergence of the $e_h$ to~$0$ is ensured---this is expressed by the
main Proposition~\ref{prop:bowtie} below. We shall then be able to build upon these results 
in the next section and derive suitable singular approximations of the $e_h$ 
\emph{outside} of the original disc of
convergence $|z|\le\rho$ of~$y(z)$, when $z$ is near~$\rho$.

\parag{Alternative recurrence.}
So far, we have operated with the main recurrence~\eqref{eq:rec_eh} relating the~$e_h$,
then applied some partial unfolding supplemented 
by simple continuity arguments.
To proceed with our programme, we need to adapt
a classical technique
in the study of slowly convergent iterations 
near an indifferent fixed point~\cite[p.~153]{deBruijn81},
which simply amounts to ``taking inverses'' and leads to a
useful alternative form of the original recurrence.

\begin{lemma}[Alternative recurrence]\label{lem:rec_alt} 
Assume, for a value~$z$, the conditions
\[
e_i(z)\ne 0\qquad\hbox{and}\qquad e_i(z)\left[1-e_i(z^2)/e_i(z)^2\right]\ne 2y(z),
\quad \hbox{for $i=0,\dots, h-1$.}
\]
Then, the following two recurrence relations hold
\begin{eqnarray}
	\frac {y^h}{e_h} 
	&=& \frac 1{e_0} + \frac 1{2y} \sum_{i=0}^{h-1} y^i \left[1-\frac{e_i(z^2)}{e_i^2}\right] \pran{1-\frac{e_i}{2y}\left[1-\frac{e_i(z^2)}{e_i^2}\right]}^{-1}\label{eq:simp_alt_rec}\\
	\frac{y^h}{e_h} 
	&=&\frac 1{e_0} + \frac 1 {2 y} \frac{1-y^h}{1-y} - \sum_{i=0}^{h-1} \frac{y^{i-1} e_i(z^2)}{2 e_i^2}
	+ \frac 1{4 y^2}\sum_{i=1}^{h-1}  
\frac{y^{i} e_i\left[1-\frac{e_i(z^2)}{e_i^2}\right]^2}
{1-\frac{e_i}{2y}\left[1-\frac{e_i(z^2)}{e_i^2}\right]}.\label{eq:alt_rec}
\end{eqnarray}
\end{lemma}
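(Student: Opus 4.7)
The plan is to derive both identities by purely algebraic manipulation of the basic recurrence~\eqref{eq:rec_eh}, using the classical ``pass to reciprocals'' trick for iterations near an indifferent fixed point, as suggested in the paragraph preceding the statement. The first step is to rewrite \eqref{eq:rec_eh} in a compact factored form that bundles the P\'olya contribution into the multiplicative step: pulling $y\,e_h$ out of $y\,e_h - \tfrac12 e_h^2 + \tfrac12 e_h(z^2)$ and writing $A_i := 1 - e_i(z^2)/e_i^2$ for brevity, one obtains
\[
e_{h+1} \;=\; y\, e_h\left(1 - \frac{e_h\,A_h}{2y}\right).
\]
The two non-vanishing hypotheses of the lemma are tailored to this form: $e_i\ne 0$ allows $A_i$ to be defined and permits inversion of $e_i$, while the second hypothesis says precisely that $1 - e_i A_i/(2y)\ne 0$, so the bracketed factor may also be safely inverted.

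Taking reciprocals of the factored recurrence, multiplying by $y^{h+1}$, and applying the elementary identity $1/(1-x) = 1 + x/(1-x)$ with $x = e_h A_h/(2y)$ yields the single-step telescoping identity
\[
\frac{y^{h+1}}{e_{h+1}} \;=\; \frac{y^h}{e_h} + \frac{y^{h-1}\,A_h}{2\bigl(1 - e_h A_h/(2y)\bigr)}.
\]
Iterating this identity for successive values of the index and collapsing the left-hand side down to the initial term $1/e_0$ produces exactly \eqref{eq:simp_alt_rec}, after factoring $1/(2y)$ out of the sum.

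The second identity~\eqref{eq:alt_rec} is then obtained from~\eqref{eq:simp_alt_rec} by a further algebraic decomposition of the summand. Applying the same identity $1/(1-x) = 1 + x/(1-x)$ once more, now within the summand, gives
\[
\frac{A_i}{1 - e_i A_i/(2y)} \;=\; A_i + \frac{e_i\,A_i^2/(2y)}{1 - e_i A_i/(2y)},
\]
which splits the sum in~\eqref{eq:simp_alt_rec} into a \emph{principal} piece $\tfrac{1}{2y}\sum_{i} y^i A_i$ and a \emph{quadratic} remainder sum that will become the last term of~\eqref{eq:alt_rec}. Substituting $A_i = 1 - e_i(z^2)/e_i^2$ into the principal piece and evaluating the purely geometric part $\sum_{i=0}^{h-1} y^i = (1-y^h)/(1-y)$ yields the three explicit terms on the right-hand side of~\eqref{eq:alt_rec}.

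Everything is formal algebra over the field of meromorphic functions of $z$, and the only genuine obstacle is clerical: careful bookkeeping of index ranges and of the exponents of $y$ at each step. The hypotheses guarantee pointwise that no division by zero occurs, so both identities hold wherever they make sense. The point of performing the extra decomposition to reach~\eqref{eq:alt_rec} — and the reason I would present both forms — is that it isolates the dominant term $\frac{1-y^h}{2y(1-y)}$, which is precisely the block that will drive the asymptotics in the sandclock of Section~\ref{sec:singularity}, while the other three terms will be controlled as lower-order corrections once $|y|$ approaches~$1$ near the singularity.
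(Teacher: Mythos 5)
Your proposal is correct and follows the paper's own route: pass from the multiplicative one-step recurrence $e_{i+1}/y^{i+1} = (e_i/y^i)\bigl(1 - \tfrac{e_i}{2y}A_i\bigr)$ to reciprocals, telescope to get \eqref{eq:simp_alt_rec}, and then expand $1/(1-u)$ once more inside the summand to get \eqref{eq:alt_rec} (the paper uses $(1-u)^{-1}=1+u+u^2(1-u)^{-1}$ directly on the one-step identity, which is the same algebra). One thing your ``clerical bookkeeping'' step should actually surface if you carry it through: the quadratic remainder sum produced by the second expansion runs over $i=0,\dots,h-1$, not $i=1,\dots,h-1$ as the displayed formula \eqref{eq:alt_rec} asserts. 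Since $A_0 = 1 - e_0(z^2)/e_0(z)^2 \neq 0$ in general, the $i=0$ term does not vanish, and the printed lower limit appears to be a typo (it has no effect on the way \eqref{eq:alt_rec} is used later, since that extra term is just one more $O(1)$ contribution to the remainder $B$ in the proof of Proposition~\ref{prop:bowtie}). It would strengthen your write-up to note this explicitly rather than assert agreement with the stated right-hand side.
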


\noindent
The form \eqref{eq:simp_alt_rec} is referred to as the 
\emph{simplified alternative recurrence}; 
the form \eqref{eq:alt_rec} is the \emph{extended alternative recurrence}.

\begin{proof} 
Starting with the recurrence relation (\ref{eq:rec_eh}), rewritten as
$$\frac{e_{i+1}}{y^{i+1}} = \frac{e_i}{y^i} \pran{1-\frac{e_i}{2y} \left[1-\frac{e_i(z^2)}{e_i^2}\right]},$$
the trick is to \emph{take inverses} (cf also~\cite{FlOd82}). 
The identity $(1-u)^{-1}=1+u(1-u)^{-1}$ implies
$$
\frac{y^{i+1}}{e_{i+1}} - \frac{y^i}{e_i} = \frac {y^{i-1}}{2}\left[1-\frac{e_i(z^2)}{e_i^2}\right]  \pran{1-\frac{e_i}{2y}\left[1-\frac{e_i(z^2)}{e_i^2}\right]}^{-1}. 
$$
Summing the terms of this equality for $i=0, \dots, h-1$ then yields the first version. The extended version follows from the expansion $(1-u)^{-1}=1+u+u^2(1-u)^{-1}$.
\end{proof}

Lemma~\ref{lem:rec_alt} is used 
to complete the proof of Lemma~\ref{lem:eh_rho2} below (see Equation~\eqref{attheend})
and it serves as the starting point of the proof of Proposition~\ref{prop:bowtie} 
(see Equation~\eqref{eq:rec_labeled}).
It then proves central in establishing the main approximation of Proposition~\ref{prop:eh_sim}
in the next section.
The interest of these alternative recurrences is that they relate 
the \emph{inverse} $1/e_h$ to
essentially polynomial forms in the previous $e_i$. In particular they \emph{serve to 
convert lower bounds into upper bounds, and vice versa}.

\subsection{Initial behaviour of $e_h$.} \label{inisand}
We establish in this subsection (cf Lemma~\ref{lem:init}) that the quantities  $|e_h(z)|$   
first exhibit a  decreasing
behaviour for $h\le N$, with  some appropriate  $N=N(z)$.
At  that point,
$|e_N(z)|$  appears to be  small    enough to guarantee that the     criterion   of
Lemma~\ref{lem:criterion} becomes applicable, whence  eventually  the
convergence $|e_h(z)|\to 0$ as $h\to\infty$ in a sandclock.

%
%
%
%

\smallskip

The following preparatory lemma serves to control 
the effect of P\'olya terms, when $z$ is close to~$\rho$, so that $z^2$ is 
close to~$\rho^2$, well inside of the disc of radius~$\rho$.
It is evocative of the theory of iteration near an attractive fixed point
(see, e.g.,~\cite[Ch.~8]{Milnor99}).


\begin{lemma}[Smooth iteration for P\'olya terms]\label{lem:eh_rho2}
Fix $z_0\in (0,\rho)$. 
There exists a constant $R_0>0$, dependent upon~$z_0$, such that, 
for all $h\ge 0$, and for all~$z$ satisfying $|z-z_0|<R_0$, one has 
$$e_h(z) = C_h(z) \cdot y(z)^h,$$
where, uniformly with respect to~$z$, $C_h(z)=C(z)+o(1)$, 
as $h\to\infty$, and $C(z)$ is analytic at $z_0$. Furthermore,
for some $K_1,K_2,c_0$ all positive, one has\footnote{
	The \emph{argument} of a complex number $w\ne 0$
taken to be the number $\theta\in(-\pi,+\pi]$ such that $w=|w| e^{i\theta}$.},
in the disc $|z-z_0|<R_0$, 
$$K_1 < |C_h(z)| < K_2\qquad \mbox{and}\qquad|\arg( e_h(z))| \le c_0 (h+1)|z-z_0|.$$
\end{lemma}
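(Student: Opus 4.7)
I study the renormalised quantity $u_h(z) := e_h(z)/y(z)^h$, aiming to show that, on a sufficiently small disc $|z - z_0| < R_0$, $u_h \to C(z)$ uniformly to an analytic limit with $K_1 \le |C(z)| \le K_2$. The argument bound on $e_h$ then follows from the positivity of $y(z_0)$ and $C(z_0)$.

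Two inequalities at $z_0$ drive the analysis. Since $y$ is strictly increasing on $(0, \rho)$ with $y(\rho) = 1$, we have $y(z_0) < 1$; and since $y(z) = z + z^2 + \cdots$ has nonnegative Taylor coefficients while $z_0 < \rho$, we have $y(z_0) \ge z_0 > z_0^2/\rho$. By continuity there exist $\delta, R_0 > 0$ such that, uniformly on $|z - z_0| \le R_0$, $|y(z)| \le 1 - \delta$, $|z|^2/(\rho|y(z)|) \le 1 - \delta$, and $e_0(z) \ne 0$. Dividing~\eqref{eq:rec_eh} by $y^{h+1}$ yields
\[
u_{h+1} = u_h\left(1 - \frac{u_h\, y^{h-1}}{2}\right) + \frac{e_h(z^2)}{2\, y^{h+1}},
\]
and Lemma~\ref{lem:bound1} bounds the P\'olya term by $O((1-\delta)^h)$ uniformly on the disc. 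A bootstrap starting from $u_0 = e_0$, which is uniformly bounded on the closed disc, shows $\sup_h|u_h| \le K_2$ uniformly: once $|u_h| \le K_2$, the multiplicative correction $u_h^2 y^{h-1}/2$ is also $O((1-\delta)^h)$ and the bound is preserved past a finite initial burst. Consequently the differences $u_{h+1} - u_h$ are uniformly absolutely summable, so $u_h \to C(z)$ uniformly, with $C$ analytic as a uniform limit of analytic functions.

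For the lower bound $|C_h(z)| \ge K_1$, I would use the simplified alternative recurrence~\eqref{eq:simp_alt_rec} of Lemma~\ref{lem:rec_alt}, which writes $\Phi_h(z) := y^h/e_h$ as a partial sum $\Phi_h = 1/e_0 + (2y)^{-1}\sum_{i < h} T_i$. Estimating each $T_i$ with the already-established upper bound on $|u_i|$ and Lemma~\ref{lem:bound1} applied to $e_i(z^2)$ — the second inequality $|z|^2/(\rho|y|) \le 1 - \delta$ is essential here to damp the ratio $e_i(z^2)/e_i^2$ — yields $|T_i| = O(|y|^i)$ uniformly on the disc, so $\sup_h |\Phi_h| < \infty$ uniformly, whence $|u_h| \ge K_1 > 0$.

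For the argument bound, $C_h(z_0) = e_h(z_0)/y(z_0)^h > 0$ is real for every~$h$ by positivity of Taylor coefficients, and so is $y(z_0)$. Cauchy's estimates applied to the uniform convergence $C_h \to C$ give uniform bounds on $C_h'/C_h$ in $h$; local Taylor expansions at $z_0$ then yield $|\arg y(z)| = O(|z-z_0|)$ and $|\arg C_h(z)| = O(|z-z_0|)$ uniformly, and $\arg e_h = \arg C_h + h\arg y$ gives $|\arg e_h(z)| \le c_0(h+1)|z-z_0|$. The main obstacle is the lower bound: controlling the $T_i$ in the alternative recurrence requires a lower bound on $|e_i|$ (to bound the P\'olya ratio $e_i(z^2)/e_i^2$) in tandem with the already-secured upper bound on $|u_i|$, and closing this dual control hinges on the slightly subtle inequality $y(z_0) > z_0^2/\rho$ set up at the outset.
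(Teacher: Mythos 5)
Your argument for the upper bound and for the existence of the analytic limit $C(z)$ is sound and essentially parallels the paper's: the paper unfolds the recurrence~\eqref{eq:rec_eh} into an explicit product-plus-sum for $e_{h+1}/y^{h+1}$ (their Equation~\eqref{ehyh}) and observes that, in a small disc around $z_0$, both the infinite product and the P\'olya sum converge geometrically (using $|y|>|z|$ and Lemma~\ref{lem:bound1}), whereas you establish boundedness by a bootstrap and then pass to the limit via summable differences. These are the same idea expressed differently. Your treatment of the argument bound---positivity of $y(z_0)$ and $C_h(z_0)$ at the real point, plus Cauchy estimates to get $|\arg C_h(z)|=O(|z-z_0|)$ uniformly in $h$---is also the paper's argument in only slightly different clothing.

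The genuine gap is in the lower bound $|C_h|\ge K_1$, and you have correctly identified it yourself as the main obstacle but not closed it. Your proposed route through the simplified alternative recurrence~\eqref{eq:simp_alt_rec} is circular: bounding the terms $T_i$ requires controlling $e_i(z^2)/e_i(z)^2$, which requires a \emph{lower} bound on $|e_i|$ of precisely the type you are trying to establish. A joint induction on $K_1\le|u_i|\le K_2$ does not obviously close because the resulting bound has the form $|y^h/e_h|\le A + B/K_1^2$, and the requirement $A+B/K_1^2\le 1/K_1$ only has a solution when $4AB<1$, a constraint on constants (depending on the disc and on $1/(1-|y|)$) that is not guaranteed. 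The inequality $y(z_0)>z_0^2/\rho$ that you invoke secures convergence of the P\'olya part of the series but does not resolve this circularity.

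The paper's lower-bound argument is cleverer and bypasses the alternative recurrence entirely. It works directly from the unfolded relation~\eqref{ehyh} evaluated at the positive real point $z_0$: there, \emph{every} term on the right-hand side is nonnegative, and in particular $C_{h+1}(z_0)\ge e_0(z_0)\prod_{i=0}^{h}\bigl(1-e_i(z_0)/(2y(z_0))\bigr)\ge e_0(z_0)\prod_{i=0}^{\infty}\bigl(1-e_i(z_0)/(2y(z_0))\bigr)>0$, the infinite product being convergent and strictly positive because $e_i(z_0)\to0$ geometrically. This gives $C(z_0)>0$ directly (not merely $C(z_0)\ge 0$), and then analyticity and uniform convergence of $C_h\to C$ propagate a uniform lower bound to a possibly smaller disc around $z_0$. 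You already have all the ingredients for this---the positivity at $z_0$, the unfolded form, and the geometric decay from Lemma~\ref{lem:bound1}---so the fix is to replace the alternative-recurrence attack with this direct positivity argument at $z_0$.
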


\begin{proof} Starting from the main relation \eqref{eq:rec_eh}
and   unfolding    only  the  $e_h$  that     is   factored, we obtain by induction
\begin{equation}\label{ehyh}
\frac{e_{h+1}}{y^{h+1}}=               e_0           \prod_{i=0}^{h}
\pran{1-\frac{e_i}{2y}}+ \frac{e_{h}(z^2)}{2y^{h+1}}  + \frac 1   {2y}
\sum_{i=0}^{h-1} \left[   \frac{e_{i}(z^2)}{y^{i}}\prod_{j=i+1}^{h}
\pran{1-\frac{e_{j}}{2y}}\right].
\end{equation} 
We let $C_h(z):=e_h(z)/y(z)^h$ and proceed to prove properties of these quantities.

\smallskip
$(i)$~\emph{Upper bound on~$C_h$ and existence of $C(z)$.}
When
$z$  lies in a  small  enough  neighbourhood of  $z_0\in(0,\rho)$,  the
convergence of $e_i$  to zero is geometric by  Lemma~\ref{lem:bound1},
and it remains so,  uniformly with respect to~$z$ restricted to
 a  small neighbourhood of~$z_0$.
Furthermore,   the inequality $|y(z)|>|z|$, which holds at~$z=z_0$,
persists,
by continuity,  for   $z$ in  a
suitably small neighbourhood of $z_0$.  
It  follows that both  the product and the sum
in the  right-hand side of~\eqref{ehyh} converge geometrically and uniformly, so that $C_h(z)\to  C(z)$ as
$h\to\infty$, where $C(z)$ is analytic  at $z_0$. 
These arguments also imply that $|C_h(z)|$ remains bounded from above by an
absolute constant: $|C_h(z)|<K_2$.

%

\smallskip

$(ii)$~\emph{Lower bound on~$C_h$.} We next observe that, in a small enough neighbourhood
of~$z_0$, the quantity $|C(z)|$ must be bounded from below.
Indeed, \emph{a contrario}, if this was not the case, then we would need to have
$C(z_0)=0$.
Now,  because of the convergence of~$C_h(z)$ to~$C(z)$, we would have $C_h(z_0)=o(1)$, implying 
$e_h(z_0)=o(y(z_0)^h)$. This last fact is finally seen to contradict Equation~\eqref{ehyh},
since the left hand side taken at~$z_0$ would tend to~$0$, while the
right hand side remains bounded from below by the positive quantity 
$e_0\prod_{i=0}^\infty(1-e_i/(2y))$ taken at~$z_0$. A contradiction has been reached.
Thus, we must have $|C(z)|>K_1^\star$ for some $K_1^\star>0$; hence 
the claimed inequality $|C_h(z)|>K_1$ for all~$h$ large enough, say $h> h_0$.
(For $h\le h_0$, we can complete the argument by referring again to Equation~\eqref{ehyh},
which precludes the possibility that $e_h(\zeta)=0$ for $\zeta\in(0,\rho)$.
A continuity argument then provides a small domain around~$z_0$ 
where $C_j(z)$ is bounded from below,
for all $j\in\{1,\ldots,h_0\}$.)

\smallskip
$(iii)$~\emph{Bound on the argument.}
Finally, the argument of $e_h$ can be expressed as follows:
\begin{equation}\label{surround}
\arg e_h = \Im( \log e_h ) = \Im(\log C_h) + h \Im( \log y ) \qquad (\operatorname{mod}~2\pi).
\end{equation}
We now consider a disc $|z-z_0|<R$ and 
momentarily examine the effect of letting $R\to0$.
By analyticity of $y(z)$ at~$z_0$ and since~$y(z_0)$ is positive real, we have
 $\Im(\log y(z)) = O(R)$.
Next, since $|C_h(z)|$ is bounded from above and below in
a small enough fixed neighbourhood of~$z_0$, $C_h(z_0)$ is positive real, 
and $C_h(z)\to C(z)$, we have, 
similarly, $\Im(\log C_h(z))=O(R)$, where the implied constant in~$O({}\cdot)$
can be taken  independent of~$h$. 
This means that, there exist constants $d_0,d_1>0$ such that,
provided~$R$ is chosen small enough, one has $|\arg(e_h(z))|<d_0R+d_1Rh$.
This last form implies the stated bound on the argument of~$e_h$.
\end{proof}

With Lemma~\ref{lem:eh_rho2} in hand, 
we can obtain a first set of properties of $e_h(z)$, which  
hold for $z$ in a sandclock~$\mathcal S(r_0,\theta_0)$ and 
for $h$ ``not too large''. 
These will be used in Proposition~\ref{prop:bowtie}
to derive an upper bound on $|e_N|$
(for some suitably chosen~$N$  depending on $z$),
to the effect  that $e_N$ eventually satisfies the criterion of Lemma~\ref{lem:criterion}. 
In the following, we only need to consider $z\in \mathcal S(r_0,\theta_0)$, with $\Im(z)\ge 0$, since we clearly have $e_h(\bar z)=\overline{e_h(z)}$, where $\bar z$ denotes the complex conjugate of $z$.

\begin{lemma}[Initial behaviour of $e_h$]\label{lem:init}
Suppose $\Im(z)> 0$. Define
the integer
\begin{equation}\label{defN}
N(z):=\left\lfloor \frac{\arccos(1/4)}{ \arg y(z)} \right\rfloor -2.
\end{equation}
Fix $\theta_0\le\frac{\pi}{8}$, with $\theta_0>0$.
There exists a constant $r_0>0$ such that,
 if $z$ lies in the sandclock $ \mathcal S(r_0,\theta_0)$, then, for all $h$ such that 
$1\le h\le N(z)$, the following inequalities hold:
\begin{equation}\label{eq:bound_arg}
\frac{|y|^{h+1}}{2(h+1)}<|e_{h}(z)| < 1/ 2 
\qquad \mbox{and}\qquad 
0\le \arg(e_{h}) \le (h+2) \arg(y).\end{equation}
Furthermore, one has $|e_h(z)|<1/5$, for $6\le h \le N(z)$.
\end{lemma}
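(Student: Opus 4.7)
The plan is to establish the three assertions simultaneously by strong induction on $h$, from $h=1$ up to $N(z)$. The upper modulus bound and the argument bound will be driven by the main recurrence~\eqref{eq:rec_eh}, while the lower modulus bound will be read off from the simplified alternative recurrence~\eqref{eq:simp_alt_rec}. Before starting the induction, I would shrink $r_0$ so that for $z\in\mathcal S(r_0,\theta_0)$ with $\Im(z)\ge 0$, the expansion~\eqref{singy} guarantees $|y(z)|<1$ and $\arg y(z)\in[0,\theta_1]$ with $\theta_1>0$ arbitrarily small (in particular $\arg y>0$ strictly in the open upper sandclock); simultaneously, $z^2$ lies in a small neighbourhood of $\rho^2\in(0,\rho)$ on which Lemma~\ref{lem:eh_rho2} applies (with $z_0=\rho^2$), yielding uniform estimates $|e_h(z^2)|\le K_2\,\beta_2^h$ and $|\arg e_h(z^2)|\le c_0(h+1)r_0$, where $\beta_2:=|y(\rho^2)|+o(1)<1$. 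The base cases $h=1$ (and $h=2$, if needed for the $1/5$ refinement) can be checked directly from the closed-form expressions and the expansion of $y$ at $\rho$, using that $e_0=y-z$ satisfies $|e_0|<1/2$ and $\arg e_0\le\tfrac32\arg y$ in a small enough sandclock.

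For the inductive step, write the main recurrence as $e_{h+1}/e_h=(y-e_h/2)(1+\xi_h)$ with $\xi_h:=e_h(z^2)/\bigl(2e_h(y-e_h/2)\bigr)$. Taking moduli, $|e_{h+1}|\le|y|\,|e_h|+\tfrac12|e_h|^2+\tfrac12 K_2\beta_2^h$; using $|y|<1$, the induction hypothesis $|e_h|<1/2$, and the geometric decay of the P\'olya term, a direct bookkeeping keeps $|e_{h+1}|<1/2$. The sharper bound $|e_h|<1/5$ for $h\ge 6$ is obtained by a numerical check of $e_6(\rho)$, propagated to the sandclock by continuity, combined with the same recurrence-driven induction.

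For the argument, $\arg e_{h+1}=\arg e_h+\arg y+\arg\bigl(1-e_h/(2y)\bigr)+\arg(1+\xi_h)$. The factor $\arg(1-e_h/(2y))$ is non-positive: the induction hypothesis $\arg e_h\ge 0$ and the smallness of $\arg y$ force $\Im(e_h/(2y))\ge 0$, so $1-e_h/(2y)$ has non-positive imaginary part. The P\'olya correction $|\arg(1+\xi_h)|$ is controlled using $|\xi_h|\le Ch\,\beta_2^h$, which follows from the induction hypothesis giving $|e_h|\ge|y|^{h+1}/(2(h+1))$ and from $|y-e_h/2|\ge|y|-1/4>1/2$. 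Summing these corrections yields $\arg e_{h+1}\le \arg e_0+(h+1)\arg y+K\sum_{i\le h}\beta_2^i$, where the final bounded series can be absorbed into the slack $2\arg y-\arg e_0$ by shrinking $r_0$ further; positivity $\arg e_{h+1}\ge 0$ follows from $\arg e_h\ge 0$, $\arg y\ge 0$, and the fact that the P\'olya term $e_h(z^2)/2$ (which is close to the positive quantity $e_h(\rho^2)/2$) tips $e_{h+1}$ only slightly, preserving $\Im e_{h+1}\ge 0$. The lower bound follows by taking moduli in~\eqref{eq:simp_alt_rec}: each summand is bounded by $|y|^i/(1-1/4-o(1))$ using $|e_i|<1/2$ and the tininess of the P\'olya terms, so $|y^h/e_h|\le 1/|e_0|+Ch$, which gives the claimed $|e_h|\ge|y|^{h+1}/(2(h+1))$ after a final adjustment of $r_0$.

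The hard part is the argument bound. The delicate point is that the slack $(h+2)\arg y-\arg e_h$ is of order $\arg y\asymp\sqrt{r_0}$, which goes to zero as $z\to\rho$, whereas the self-interaction and P\'olya contributions are \emph{a priori} not proportional to $\arg y$; one must exploit the alignment of imaginary parts (summarised by the sign analysis of $\Im(e_h/(2y))$) to show that their net effect is bounded by a constant multiple of $\arg y$ after summation. The stopping time $N(z)$ is exactly what makes this viable: enforcing $h\arg y\le\arccos(1/4)$ keeps $\Re(y^h)\ge|y^h|/4$, hence $|y-e_h/2|$ bounded below, so the iteration never approaches the degeneracy $y=e_h/2$ during the induction, and the geometric decay of the P\'olya contributions dominates any amplification.
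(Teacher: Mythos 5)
Your overall plan (strong induction over $h$ from $1$ to $N(z)$, after a preliminary shrinkage of $r_0$ and an appeal to Lemma~\ref{lem:eh_rho2} with $z_0=\rho^2$ to tame the P\'olya terms, with the alternative recurrence supplying the lower bound) is the right one and closely parallels the paper. But two of your three inductive propagation steps, as written, do not actually close.

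\emph{Modulus.} You bound $|e_{h+1}|\le|y||e_h|+\tfrac12|e_h|^2+\tfrac12K_2\beta_2^h$ and claim ``direct bookkeeping keeps $|e_{h+1}|<1/2$''. Since $|y|\to1$ as $z\to\rho$, the hypothesis $|e_h|<1/2$ gives only $|e_{h+1}|\lesssim \tfrac12+\tfrac18>\tfrac12$, so the bound is not propagated. The decrease of $|e_h|$ is \emph{not} free; the paper obtains it from the contraction property of $g(w)=w(1-w/2)$, namely $|g(w)|\le|w|$, which holds only on the restricted domain $\{|w|\le1,\ 0\le\arg w\le\arccos(1/4)\}$. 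In other words, the modulus induction is genuinely \emph{coupled} to the argument induction, and this coupling is exactly what the cutoff $N(z)$ (defined so that $(h+1)\arg y\le\arccos(1/4)$) is protecting. Your modulus step discards this coupling, so it fails. Also, $|e_0|=|y-z|$ satisfies $e_0(\rho)=1-\rho\doteq0.597>1/2$ (see~\eqref{e0sand}); your assertion $|e_0|<1/2$ is false, though it does not change the substance.

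\emph{Argument.} You factor $e_{h+1}/e_h=(y-e_h/2)(1+\xi_h)$ and bound $|\arg(1+\xi_h)|\le C|\xi_h|\le Ch\beta_2^h$, then hope to absorb $\sum_h Ch\beta_2^h$ into the slack ``$2\arg y-\arg e_0$''. But $\sum_h Ch\beta_2^h$ is a positive constant that does \emph{not} shrink with $r_0$, whereas $\arg y\asymp\sqrt{r_0}\to0$. So the absorption cannot be performed; the cumulative error dwarfs the slack. You acknowledge in your final paragraph that one must exploit the near-reality of the P\'olya term (``alignment of imaginary parts''), but your concrete bounds do not use it: to salvage the multiplicative route you would need to track $\Im\xi_h$ (not $|\xi_h|$), using Lemma~\ref{lem:eh_rho2} to get $|\arg\xi_h|=O(h\sqrt{r_0})$ and hence $\sum_h|\arg(1+\xi_h)|=O(\sqrt{r_0})=O(\arg y)$. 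The paper avoids this entirely by treating the P\'olya term \emph{additively} and applying the elementary observation that for $\zeta,\zeta'$ with arguments in $[0,\pi/2]$ one has $\arg(\zeta+\zeta')\le\max(\arg\zeta,\arg\zeta')$; it then bounds $\arg(e_h(z^2)/(2y))$ by $\tfrac{h}{2}\arg y$ using Lemma~\ref{lem:eh_rho2}, and the ``max'' of the two candidate arguments is $\le(h+2)\arg y$. This additive max-of-arguments device is cleaner than the multiplicative correction you propose, and is the key missing idea.

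Your lower-bound step via the simplified alternative recurrence is in the right spirit (the paper reduces instead to the pure iteration $f_h$ after observing that the P\'olya term only increases $|e_{h+1}|$, then derives $|y|^h/f_h\le 2+\tfrac32h$); both routes are comparable, but yours too ultimately relies on the argument bound to control $|1-(e_i/2y)[1-e_i(z^2)/e_i^2]|^{-1}$, so it inherits the gaps above.
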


\noindent
Observe that we can also assume, in a small enough sandclock,
\begin{equation}\label{e0sand}
\frac{1}{2}<\left|e_0(z)\right| < \frac23,
\end{equation}
since $e_0(\rho)=1-\rho$ has numerical value ${}\doteq0.59730$ 
and~$e_0(z)$ is continuous at~$z=\rho$.

\begin{proof} As a preamble, we note that $N(z)$ tends to infinity as $z\to\rho$,
since $y(\rho)=1$ is real, hence has argument~$0$.
Consider next the basic recurrence relation (\ref{eq:rec_eh}) rewritten as
\begin{equation}\label{eq:rec_ehy}
\frac{e_{h+1}} y = y \cdot \frac{e_h}y \cdot \pran{1-\frac {e_h/y} 2} + \frac{ e_h(z^2)}{2y}.
\end{equation}
The behaviour of the first term in the right-hand side of (\ref{eq:rec_ehy}) is dictated by properties of 
the mapping 
\begin{equation}\label{gdef}
g~:~w\mapsto w (1-w/2).
\end{equation}
(A very similar function appeared in the analysis of \citet[Lemma 3]{FlOd82}.) By a simple modification of the proof in \cite{FlOd82}, we can check elementarily the implication
\begin{equation}\label{eq:behav_g} 
\quad \left\{ \begin{array}{c} |w|\le 1 \\ 0 \le \arg w \le \arccos(1/4)\end{array}\right. \quad \Rightarrow \quad \left\{ \begin{array}{c} |g(w)|\le |w| \\ 0 \le \arg g(w) \le \arg w.\end{array}\right.
\end{equation}

\smallskip
$(i)$~\emph{Weak upper bounds on modulus and bounds on argument.}
We are first going to use \eqref{eq:behav_g} and  induction on~$h$ (with
$1\le h\le N(z)$), in order to establish 
a suitably weakened form of~\eqref{eq:bound_arg}; namely, 
\begin{equation}\label{eq:induc_eh}|e_h/y|< 1 \qquad \mbox{and} \qquad0\le \arg (e_h/y) \le (h+1) \arg y.\end{equation} 

We start with the basis of the induction relative to~\eqref{eq:induc_eh}, the case~$h=1$,
where $e_1=y-z-z^2$. Observe that $e_1(\rho)=1-\rho-\rho^2\doteq 0.43$,
so that $|e_1(z)|<1/2$ (and, \emph{a fortiori}, $|e_1/y|<1$)
is granted for~$z$ close enough to~$\rho$.
Write next $
z/\rho=1+re^{i\theta}$,
with $\theta$ close to $\pi/2$ and~$r$ a small
positive number. 
Then, by virtue of the singular expansion~\eqref{singy} of Lemma~\ref{lem:rho1},
we have, 
\[
y(z)=1+i\lambda\sqrt{r}e^{i\theta/2}+O(r),
\]
as $r\to0$, hence
\[
\frac{e_1}{y}=1-\rho-\rho^2+i\lambda(\rho+\rho^2)e^{i\theta/2}\sqrt{r}+O(r).
\]
Since~$\theta/2$ now lies in $(\pi/4-\pi/16,\pi/4+\pi/16)$, 
there results from the last expansion that
the argument of~$e_1/y$ is essentially a small positive multiple of~$\sqrt{r}$. 
A precise comparison of the arguments of~$y$ and $e_1/y$,
as provided by the last two displayed equations,
 confirms (routine details omitted)
that we can choose a small enough~$r_0$ 
such that,
in the sandclock~$\mathcal S(r_0,\pi/8)$, we have both~$|e_1/y|<1$ and $0<\arg(e_1/y)\le 
2\arg(y)$.


Suppose now that \eqref{eq:induc_eh} holds for all integers up to $h\le N(z)$.
In order to determine whether it also holds for $h+1$, we have to take into account
the P\'olya term, that is, the second term 
in the right-hand side of (\ref{eq:rec_ehy}). 
By possibly further restricting~$r_0$,
we can guarantee that,
for all $z\in \mathcal S(r_0,\pi/8)$, this second term does not contribute any increase in the argument of $e_{h}/y$. 
Indeed, observe that
for $z\in \mathcal S(r_0,\pi/8)$, we have $\arg(y) \ge \delta r^{1/2}$, with some $\delta>0$.
In addition, by Lemma~\ref{lem:eh_rho2}, Equation~\eqref{surround}, 
we have~$\arg e_h(z^2)\le c_0(h+1)|z^2-\rho^2|$, so that  $|z^2-\rho^2|=O(r)$
is of a smaller order than~$O(\sqrt{r})$.
Thus, in~\eqref{eq:rec_ehy}, the second (P\'olya)  term on the right hand side
of the equality has an argument which is of order $hr$, and, for $r$ small enough,
may be taken to satisfy
\[0\le \arg (e_h(z^2)/(2y)) \le  h/2 \cdot \arg(y).\]
Now, the simple geometry of parallelograms
implies that two complex numbers $\zeta$ and~$\zeta'$, whose arguments lie in~$[0,\frac{\pi}{2}]$,
satisfy $\arg(\zeta+\zeta')\le \max(\arg(\zeta),\arg(\zeta'))$. There results,
from the induction hypothesis, the chain of inequalities 
\begin{align*}
0\le \arg(e_{h+1}/ y) 
&\le\max\{\arg(e_h/y) + \arg(y), \arg (e_h(z^2)/y)\}\\ 
&\le\max\{(h+1) \arg y + \arg(y)\}\\
&\le(h+2) \arg(y).
\end{align*}
Note that the first inequality follows from the use of \eqref{eq:behav_g}. In particular, this step requires that $\arg(e_h/y)$ be lower than $\arccos(1/4)$, which we can only garantee as long as our upper bound $(h+1)\arg(y)$ is itself at most $\arccos(1/4)$. This is why we only proceed with the induction only as long as $h\le N(z)$.
At this stage, the induction is complete and~\eqref{eq:induc_eh} is established for $h\le N(z)$.

\smallskip
$(ii)$~\emph{Improved upper bound on modulus.}
The upper bound on the modulus provided by~\eqref{eq:induc_eh}, being
(slightly) weaker than the upper bound on~$|e_h|$ asserted in~\eqref{eq:bound_arg}, needs
to be strengthened.
The $y(z)$ and $e_h(z)$ are analytic, hence continuous, in
the domain $\mathcal D$ of~\eqref{defd} and all the sandclooks it contains.
Also, we have seen that $e_1(\rho)<1/2$, while, by definition, $e_1(\rho)>e_2(\rho)>\cdots$\,.
So, after  possibly
restricting~$r_0$ to a smaller value once more, for all $z\in \mathcal S(r_0,\pi/8)$, 
the inequality $|e_h(z)|\le 1/2$ is guaranteed to hold with  $h=1,\ldots,6$,
this by virtue of continuity. 
Next, if $h\ge 6$, the alternative recurrence and the fact that $|e_h/y|<1$
(asserted in~\eqref{eq:induc_eh} and proved in Part~$(i)$ above) imply,
via the triangle inequality
\[
\left|\frac{e_h}{y}\right| \le |y|\left|g(e_h/y)\right|+\left|\frac{e_h(z^2)}{2y}\right|,
\qquad\hbox{where}\quad  g(e_h/y)=\frac{e_h}{y}
\cdot\left(1-\frac{e_h}{2}\right)
\]
($g(w)$ is as defined in~\eqref{gdef}). Now, for $h< N(z)$, the quantity~$g(w)$ 
is taken at $w=e_h/y$, which is such that $|w|<1$ and $\arg(w)<\arccos(1/4)$, 
so that, by~\eqref{eq:behav_g}, 
\begin{equation}\begin{array}{lll}
|e_{h+1}/y| 
&\le& \ds |e_6(z)/y| + \frac 1 2 \cdot \sum_{i=6}^{h+1} |e_i(z^2)/y| \\
&\le& \ds  |e_6(\rho)| + K \sqrt r + \frac 1 2 \cdot \sum_{i=6}^\infty (\rho + 3 r)^i,
\end{array}
\end{equation}
for some constant $K$ ; here
 the last line makes use of
the inequality  $|e_i(z^2)|\le   (|z|^2/\rho)^i$ granted by  Lemma~\ref{lem:bound1}.   
It follows  easily that, for  $h\ge 6$,  
$$  |e_{h+1}/y|\le |e_6(\rho)| +
\frac 1 2  \cdot  \frac {\rho^6} {1-\rho}  +  2 K \sqrt r, $$  for
all $r\le r_0$, with~$r_0$ chosen  small enough. 
In particular, for $h\in[6,N(z)]$ and $r\le r_0$ small enough, we have $|e_h/y|<1/5$.
Combined with previous observatiosn regarding the initial values of $e_j(z)$,
this implies the inequality $|e_h/y|<\tfrac12$ for all $h\le N(z)$, as asserted.  



\smallskip

$(iii)$ \emph{Lower bound on modulus.}
It finally remains to establish  the lower bound on~$|e_h|$ in \eqref{eq:bound_arg}.
We start with 
the recurrence relation (\ref{eq:rec_ehy}).
For $h\le N(z)$, the additional P\'olya term $e_h(z^2)$ only contributes to making $|e_{h+1}|$ larger. Indeed, for $z\in \mathcal S(r_0,\theta_0)$,
by Lemmas~\ref{lem:eh_rho2} and the upper bound 
on arguments proved in Part~$(i)$, both terms are such that, for $h<N(z)$,
$$|e_{h+1}/y| \ge |y| \cdot |e_h/y| \cdot \pran{1-\frac{|e_h/y|}2}.$$
Since $x\mapsto x (1-x/2)$ is increasing in $[0,1]$, we have $|e_h/y|\ge f_h$, , for all $h\ge 0$,
where the sequence $(f_h)_{h\ge 0}$ is defined by 
$$f_0=\frac{|e_0|}{|y|}=\frac{|y-z|}{|y|} \qquad \mbox{and} \qquad f_{h+1}= |y| \cdot f_h \cdot \pran{1-\frac{f_h}2}.$$
(The latter recurrence relation is 
precisely the one analysed by \citet{FlOd82} in the case of simply generated trees.) 
For $r_0$ small enough, a process analogous to the derivation of the alternative recurrence
in Lemma~\ref{lem:init} yields
\begin{equation}\label{attheend}
\frac{|y|^h}{f_h} 
= \frac 1{f_0} + \frac 1 2 \cdot \sum_{i=0}^{h-1} |y|^i + \frac 1 2 \cdot \sum_{i=0}^{h-1} \frac{f_i}{1-f_i/2} \cdot |y|^i 
\le \frac 1 {f_0} + \frac 3 2 \cdot \sum_{i=0}^{h-1} |y|^i 
\le 2 + \frac{3h}2.
\end{equation}
This last last bound directly implies the lower bound on~$|e_h|$ 
asserted in~\eqref{eq:bound_arg}.
\end{proof}

\subsection{Existence of a convergence sandclock.}\label{finsand}
We can now turn to a proof of the main result of this section, Proposition~\ref{prop:bowtie}
stated below,
which establishes the existence of a sandclock in which
the $e_h$ converge to~$0$. This proof follows 
the lines of the analogous statement \cite[Proposition 4]{FlOd82},
 where the iteration is ``pure''. In the present  context,
we need once more to control the effect of the P\'olya terms,
which can be done thanks to an easy auxiliary result, Lemma~\ref{lem:polya_term}. 


\begin{lemma}\label{lem:polya_term}
There exist $r_0,\theta_0>0$ small enough, so that, for $z\in \mathcal S(r_0,\theta_0)$ and for all~$h$
satisfying 
$0\le h\le N$, with $N\equiv N(z)$ as specified in~\eqref{defN},
one has
\[
\frac{|e_h(z^2)|}{|e_h(z)^2|} \le \frac12\cdot \left(\frac{4}{5}\right)^h.
\]
\end{lemma}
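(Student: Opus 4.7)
The desired inequality is a sandwich estimate: an upper bound on $|e_h(z^2)|$ divided by the square of a lower bound on $|e_h(z)|$. The crucial observation is that when $z$ lies in a small sandclock at $\rho$, the argument $z^2$ sits strictly inside the original disc of convergence (since $\rho^2 \doteq 0.162 < \rho$), so Lemma~\ref{lem:bound1} applies directly to the P\'olya term and delivers a clean geometric estimate. Meanwhile, the lower bound on $|e_h(z)|$ is exactly what Lemma~\ref{lem:init} provides throughout the allowed range $1 \le h \le N(z)$.

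\textbf{Main calculation.} First shrink $r_0$ so that $|z^2| < \rho$ for every $z\in\mathcal S(r_0,\theta_0)$, which is immediate by continuity. Lemma~\ref{lem:bound1} then yields $|e_h(z^2)| \le h^{-1/2}(|z|^2/\rho)^h$ for $h\ge 1$, and Lemma~\ref{lem:init} gives $|e_h(z)|^2 \ge |y(z)|^{2(h+1)}/(4(h+1)^2)$. Combining,
\begin{equation*}
\frac{|e_h(z^2)|}{|e_h(z)|^2} \;\le\; \frac{4(h+1)^2}{\sqrt{h}\,|y(z)|^{2}}\left(\frac{|z|^2}{\rho\,|y(z)|^2}\right)^{\!h}.
\end{equation*}
At $z=\rho$ the base of the geometric factor evaluates to $|z|^2/(\rho|y|^2) = \rho \doteq 0.403$, which is strictly less than $4/5$. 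By continuity of $y$ on $\mathcal D$, I can further shrink $r_0$ and $\theta_0$ so that this base is bounded above in the sandclock by some fixed $\gamma$ with $\rho < \gamma < 4/5$. Writing $\gamma = (4/5)\mu$ with $\mu<1$, the right-hand side becomes at most $K(h+1)^2 h^{-1/2}\mu^h \cdot (4/5)^h$ for a constant $K$; since the polynomial-times-geometric prefactor tends to $0$, there is an absolute index $h_0$ such that the bound $\tfrac12(4/5)^h$ holds for \emph{all} $h\ge h_0$, uniformly in the sandclock.

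\textbf{Handling small $h$ and putting it together.} The residual task is the finite set $0\le h < h_0$. For $h=0$ one has $e_0(z) = y(z)-z$ which is continuous and nonzero at $\rho$ (with $|e_0(\rho)|>1/2$ by \eqref{e0sand}), and $|e_0(\rho^2)|/|e_0(\rho)|^2 = (1-2\rho-\rho^2)/(1-\rho)^2 \doteq 0.091 < \tfrac12$; for each $1\le h <h_0$, Lemma~\ref{lem:init} applied at $z=\rho$ shows that $|e_h(\rho)|$ is bounded away from $0$ while $|e_h(\rho^2)|$ is much smaller (exponentially so in $h$), so the ratio is strictly less than $\tfrac12(4/5)^h$ at $\rho$. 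Continuity of each $e_h$ at $\rho$ then extends the inequality to a neighbourhood; intersecting these finitely many neighbourhoods with the sandclock fixed above yields a single $r_0,\theta_0$ that works for all $h\in[0,N(z)]$ simultaneously. The principal obstacle is precisely this coordination of the large-$h$ and small-$h$ regimes under a \emph{single} choice of sandclock parameters, but it is resolved because $h_0$ is determined purely by the geometric ratio $\mu$, so only a finite intersection is required.
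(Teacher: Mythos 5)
Your proposal follows the paper's strategy essentially step for step: combine the upper bound on $|e_h(z^2)|$ from Lemma~\ref{lem:bound1} with the lower bound on $|e_h(z)|$ from Lemma~\ref{lem:init} to obtain a polynomial-times-geometric bound with base close to $|z|^2/(\rho|y|^2)$, observe that at $z=\rho$ this base equals $\rho\doteq 0.40<4/5$, shrink the sandclock by continuity, dispose of all $h\ge h_0$ by the decaying geometric factor, and treat the finitely many small $h$ separately via a check at $\rho$ plus a continuity argument. This is precisely the structure of the paper's proof (the paper uses the concrete choices $0.45$ for the contracted base and $h_0=13$, but those are cosmetic).

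There is one step you have glossed over, and it is worth flagging because your phrasing suggests it is an analytic consequence of the lemmas when in fact it is not. For $1\le h<h_0$ you write that Lemma~\ref{lem:init} ``shows that $|e_h(\rho)|$ is bounded away from $0$ while $|e_h(\rho^2)|$ is much smaller \dots so the ratio is strictly less than $\frac12(4/5)^h$ at $\rho$.'' The bounds the lemmas actually furnish are far too crude to deliver this for small $h$. Indeed, Lemma~\ref{lem:init} at $z=\rho$ (where $y(\rho)=1$) gives only $|e_h(\rho)|>\frac1{2(h+1)}$, and Lemma~\ref{lem:bound1} gives $|e_h(\rho^2)|\le h^{-1/2}\rho^h$, so the resulting upper bound on the ratio is of the order $4(h+1)^2h^{-1/2}\rho^h$. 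Comparing with $\frac12(4/5)^h$, one would need $8(h+1)^2 h^{-1/2}<(0.8/\rho)^h\doteq 1.99^h$, which fails for every $h\le 9$ or so. So the inequality for small $h$ at $z=\rho$ does not follow from the stated lemmas; it is a genuine numerical fact about the actual values $e_h(\rho)$ and $e_h(\rho^2)$ (which are considerably smaller and larger, respectively, than the lemma bounds), and it must be verified by direct computation, as the paper does for $j=0,\ldots,12$. Your $h=0$ computation is exactly the right idea; you simply need to state that the analogous finite check is done for all $h<h_0$ rather than attributing it to Lemma~\ref{lem:init}. With that correction, the proof is complete and matches the paper's.
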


\begin{proof}
Set $z=\rho+re^{i\theta}$,
with  $z\in\mathcal S(r_0,\theta_0)$, for $r_0,\theta_0>0$ taken small enough,
which will be successively constrained, as the need arises.
The inequality
\begin{equation}\label{up0}
\frac{|e_h(z^2)|}{|e_h(z)^2|} \le\frac{4(h+1)^2}{y^2}
\left| \frac{z^2 }{\rho y^{2}}\right|^h, \qquad 0\le h \le N,
\end{equation}
combines the  upper bound on~$e_h(z^2)$ provided by Lemma~\ref{lem:bound1}
(with~$z$ in the statement to be replaced by~$z^2$) and the lower bound on~$e_h(z)$ 
guaranteed by Lemma~\ref{lem:init}. 

Now, at $z=\rho$, the upper bound~\eqref{up0}
 takes the form $4(h+1)^2\rho^h$, where $\rho\doteq 0.40269$,
so that its decay is about $0.4^h$. By continuity of 
the exponential rate $|z^2/(\rho y^2)|$, 
there exists a small sandclock such that the decrease is less than $4(h+1)^2 0.45^h$ (say).
Furthermore, 
we verify easily 
that this last quantity is less than $\frac12\cdot 0.8^h$ for all $h\ge13$.
Thus, the statement is established for~$h$ large enough ($h\ge13$).
On the other hand, examination of initial values 
shows that the ratio $e_j(\rho^2)/e_j(\rho)$
decreases rapidly from a value of about 0.0543, at $j=0$, 
to about 7.8\,$10^{-10}$, at $j=12$;
furthermore, we observe numerically that $2\cdot 0.8^{-j}e_j(\rho^2)/e_j(\rho)$ 
is always less than $1/{9}$, for $j=0\,\ldots,12$.
Thus, by continuity again,
in a small  enough sandclock,  we must  have 
$|e_j(z^2)/e_j(z)|<\frac12\cdot 0.8^j$ for 
$j=0,\ldots,12$.  
%
\end{proof}

\begin{proposition}[Convergence in a ``sandclock'' around $\rho$]\label{prop:bowtie}
There exist constants $r_0,\theta_0>0$ such that the sequence $\{e_h(z), h\ge 0\}$ converges to zero for all $z$ in the sandclock~$\mathcal S(r_0,\theta_0)$.
\end{proposition}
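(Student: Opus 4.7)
The overall plan is to show that after $N(z)$ iterations the quantity $|e_N(z)|$ has become so small (compared with $1-|y(z)|$) that the convergence criterion of Lemma~\ref{lem:criterion} becomes applicable uniformly throughout a suitably small sandclock; we then conclude via that criterion. By conjugation symmetry, it suffices to treat $z\in \mathcal S(r_0,\theta_0)$ with $\Im z>0$; write $z=\rho+re^{i\theta}$ with $\pi/2-\theta_0<\theta<\pi/2+\theta_0$.

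First, I would establish a sharp upper bound on $|e_N(z)|$ by applying the simplified alternative recurrence~\eqref{eq:simp_alt_rec} at $h=N(z)$, where $N(z)$ is as defined in~\eqref{defN}. The input is precisely what has been collected in Subsection~\ref{inisand}: by Lemma~\ref{lem:polya_term}, the P\'olya correction $e_i(z^2)/e_i(z)^2$ is bounded by $\tfrac12(4/5)^i$ on $0\le i\le N$, and by Lemma~\ref{lem:init} the factors $(1-(e_i/2y)[1-e_i(z^2)/e_i^2])^{-1}$ remain uniformly bounded in modulus. Consequently~\eqref{eq:simp_alt_rec} can be compared with its pure-iteration counterpart, so that
\[
\frac{y^N}{e_N}=\frac{1}{e_0}+\frac{1-y^N}{2y(1-y)}+\mathcal E(z),\qquad |\mathcal E(z)|\le K_0,
\]
with $K_0$ an absolute constant. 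The singular expansion~\eqref{singy} gives $|1-y(z)|\asymp\sqrt r$, and the definition of $N(z)$ forces $N\arg y\to\arccos(1/4)$, hence $|y|^N$ is bounded away from both $0$ and $1$. Solving for $e_N$ then yields the key bound
\[
|e_N(z)|\le K\,|y(z)|^N\,|1-y(z)|\le K'\bigl(1-|y(z)|\bigr),
\]
for uniform constants $K,K'$.

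Next, I would verify the three inequalities~\eqref{eq:criterion} with $m=N(z)$. Set $\epsilon(z):=1-|y(z)|>0$ and choose
\[
\alpha:=4K'\epsilon(z),\qquad \beta:=1-\tfrac{1}{4}\epsilon(z).
\]
After possibly shrinking $K'$ (by shrinking $r_0$ so that the implied constants in Step~1 become as small as needed), the first condition $|e_N|<\alpha$ follows from the bound just obtained, while $|y|+\alpha/2=1-(1-2K')\epsilon<\beta$ is immediate. For the third condition, note that $|z|^2/\rho<1$ uniformly in a small enough sandclock; since $N(z)\to\infty$ as $z\to\rho$, the term $(|z|^2/\rho)^N$ decays faster than any polynomial in $\sqrt r$, so $\alpha\beta+(|z|^2/\rho)^N<\alpha$ holds after a final shrink of $r_0$. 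Lemma~\ref{lem:criterion} then yields $e_h(z)\to 0$, with the geometric estimate~\eqref{crit2}; uniformity in $z$ follows because all constants have been chosen independently of $z\in\mathcal S(r_0,\theta_0)$.

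The main obstacle is the first step: one needs simultaneously that the pure-iteration approximation to~\eqref{eq:simp_alt_rec} is accurate enough to pin down $|e_N|$ to order $1-|y|$ (rather than merely $O(1)$), and that $|y^N|$ stays bounded away from $0$ so the denominator $1-y^N$ in the inversion does not swallow the gain. The first hinges on the geometric decay of $e_i(z^2)/e_i^2$ from Lemma~\ref{lem:polya_term}; the second is the precise reason the definition~\eqref{defN} of $N(z)$ involves $\arccos(1/4)$ rather than any other cutoff, forcing $\arg(y^N)$ to a fixed interior angle and hence $|y^N|$ to a universal constant in $(0,1)$. Once this quantitative bound on $|e_N|$ is in hand, everything else is bookkeeping with continuity and compactness.
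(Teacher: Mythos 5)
Your overall strategy matches the paper's: apply the alternative recurrence of Lemma~\ref{lem:rec_alt} at $h=N(z)$, derive an upper bound on $|e_N|$ of order $1-|y|$, and then feed that into the convergence criterion of Lemma~\ref{lem:criterion}. The paper works with the extended form~\eqref{eq:alt_rec} rather than~\eqref{eq:simp_alt_rec}, but this is only a reorganisation. However, your first step contains a genuine gap that cannot be patched by shrinking the sandclock.

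The claim that $\mathcal E(z)$ is uniformly $O(1)$ is false. Your $\mathcal E$ absorbs, among other things, the corrections arising from replacing $(1-u_i)^{-1}$ by $1$ with $u_i=\frac{e_i}{2y}\bigl[1-\tfrac{e_i(z^2)}{e_i^2}\bigr]$. At this stage of the argument the only available upper bound on $|e_i|$ for $i$ in the intermediate range is $|e_i|<1/5$ (Lemma~\ref{lem:init}); the much sharper bound $|e_i|=O(1/i)$ is Lemma~\ref{lem:up_uniform}, which is proved \emph{after} and \emph{using} the present proposition. Consequently the summand $y^i u_i(1-u_i)^{-1}$ contributes roughly $\tfrac{1}{10}|y|^i$ each, and the whole sum is of order $\tfrac{1-|y|^N}{1-|y|}\asymp\epsilon^{-1}$, the same order as the main term $M$, not $O(1)$. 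The paper handles this by showing (part of term~$B$) that this contribution has a \emph{small coefficient}, at most $\tfrac{3}{50\cos t}\,\frac{1-|y|^N}{1-|y|}$, and then noting that $\tfrac{1}{2}-\tfrac{3}{50\cos t}>0$ near $t=-\pi/4$; the $\arccos(1/4)$ cutoff is exactly what guarantees $|y|^N$ lands in a fixed interval inside $(0,1)$, making $1-e^{\varphi\cot t}$ a definite positive constant. Without this quantitative comparison, the bound $|e_N|\le K'(1-|y|)$ does not follow from an $O(1)$ error.

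A second, related gap: you assert that $K'$ can be made as small as desired by shrinking $r_0$. It cannot. The true value of $|e_N|/(1-|y|)$ converges, as $z\to\rho$ in the sandclock, to a fixed positive constant $\tfrac{5}{2}\cdot e^{-\varphi}/(1-e^{-\varphi})\approx 0.91$; shrinking $r_0$ does not reduce it. With $K'\approx 0.91$, your choice $\alpha=4K'\epsilon$ and $\beta=1-\epsilon/4$ fails the second condition $|y|+\alpha/2<\beta$, since $1-\epsilon+2K'\epsilon>1$. The paper avoids this by taking $\alpha$ to be the \emph{exact} bound (not a multiple of it) and then verifying numerically that $\cos t - \tfrac{5}{4}\,e^{\varphi\cot t}/(1-e^{\varphi\cot t})>\tfrac14$ near $t=-\pi/4$, so that $\beta:=1-\epsilon/4$ works. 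Your treatment of the third condition is fine, but the first two need the sharper, non-shrinkable constants of the paper's argument.
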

\begin{proof} 
It suffices to verify that, for $h=N\equiv N(z)$ as
specified in Equation~\eqref{defN},
the quantity $e_N$ satisfies the convergence criterion
of Lemma~\ref{lem:criterion},
which then grants us convergence of the~$e_j$ to~$0$ for~$j>N$.
 For this purpose, we 
appeal to the alternative recurrence stated in Lemma~\ref{lem:rec_alt}
\begin{equation}\label{eq:rec_labeled}
	\frac{y^h}{e_h} 
	= \underbrace{\frac 1 {2 y} \frac{1-y^h}{1-y}}_M
 + \underbrace{\frac 1{e_0}- \sum_{i=0}^{h-1}\frac{y^{i-1} e_i(z^2)}{2 e_i^2}}_{A} 
	+ \underbrace{\frac 1{4 y^2}\sum_{i=1}^{h-1}  
\frac{y^{i} e_i\left[1-\frac{e_i(z^2)}{e_i^2}\right]^2}
{1-\frac{e_i}{2y}\left[1-\frac{e_i(z^2)}{e_i^2}\right]}}_{B}
\end{equation}
and devise an asymptotic lower bound for the right-hand side.
(Observe that we can indeed use the relation since,
 by Lemmas~\ref{lem:init} and \ref{lem:polya_term}, for all $i=0, \dots, N$, 
the denominators do not vanish.)

Write $1-y(z)=\epsilon e^{it}$. As in Lemma~\ref{lem:init}, we assume without loss of generality that $\Im(z)>0$. We need 
to establish properties of the various quantities, which intervene 
in~\eqref{eq:rec_labeled};
this, in a small sandclock, that is, for small $\epsilon>0$ and $t$ close to $-\pi/4$. 
The following expansions are valid uniformly for $t\in [-\pi/4 -\delta, -\pi/4 +\delta]$ with $0<\delta<\pi/4$ when $\epsilon \to0$:
\setlength\arraycolsep{1.4pt}
\begin{equation}\label{eq:value_N}
\begin{array}{r l l}
1-|y|&=&\epsilon \cos t +O(\epsilon^2),\\
\arg (y) &=& -\epsilon \sin t +O(\epsilon^2),
\end{array}
\quad \mbox{and} \quad
\begin{array}{rl l}
	N(z) &=& -\varphi/(\epsilon \sin t) + O(1)\\
	1-|y|^N &=& 1- e^{\varphi \cdot \cot t} + O(\epsilon),
\end{array}
\end{equation}
\setlength\arraycolsep{1em}
where $\varphi:=\arccos(1/4)$.

The first term~$M$ of the right-hand side of \eqref{eq:rec_labeled} 
will be seen to bring the main contribution. It satisfies,
as $\epsilon\to0$:
$$|M|=\left|\frac 1 {2 y} \frac{1-y^N}{1-y}\right|= \frac 1 {2|y|}\cdot \frac{|1-y^N|}{|1-y|}\ge \frac1 2 \frac{1-|y|^N}{|1-y|}= \frac{1-e^{\varphi \cdot \cot t}}{2\epsilon}+O(1),$$

Next, regarding the term~$A$,  we have
$$|A|\le \frac 1 {|e_0|} + \left|\frac 1 {2y} \sum_{i=0}^{N-1}\frac{y^i e_i(z^2)}{e_i(z)^2}\right| \le \frac 1 {|e_0|} +\frac 1 {2 |y|} \sum_{i=0}^{N-1} \left |\frac {e_i(z^2)}{e_i(z)^2} \right| = O(1),$$
since, by Lemma~\ref{lem:polya_term}, the summands decrease geometrically.

It now remains to analyse $B$. 
We split it further: by Lemma~\ref{lem:polya_term}, for all $i$ such $18\le i \le N$, 
we have $|e_i(z^2)|/|e_i(z)^2| \le 1/100$.
Then, by Lemma~\ref{lem:init}, and for $\epsilon$ small enough, we obtain
\begin{eqnarray*}
|B| \le \frac {9} {4(1-\epsilon)} \cdot \frac{\pran{\frac 3 2}^2}{1-\frac{1/2}{2-2\epsilon} \frac 3 2} + \frac{1/5} {4(1-\epsilon)} \cdot \frac{\pran{\frac {101}{100}}^2}{1-\frac{1/5}{2-2\epsilon} \frac {101} {100}} \frac{1-|y|^N}{1-|y|}< 22 + \frac 3 {50} \cdot \frac{1-|y|^N}{1-|y|}.
\end{eqnarray*}
It follows that
\begin{equation}\label{eq:ynen}\frac{|y^N|}{|e_N|} \ge \frac{1-e^{\varphi \cdot \cot t}}\epsilon \pran{\frac 12 -\frac 3 {50 \cos t}} + O(1) > \frac 2 5 \cdot \frac{1-e^{\varphi \cdot \cot t}}\epsilon,
\end{equation}
where the last inequality holds for all $z\in \mathcal D$ such that $\epsilon<\epsilon_0$ and $|t+\pi/4|< \delta_0$, as soon as both $\epsilon_0$ and $\delta_0$ are small enough. 

We    can  now  return     to    the   criterion  for      convergence
(Lemma~\ref{lem:criterion}) and  verify     that in a  small    enough
sandclock the  conditions  in \eqref{eq:criterion}  are  satisfied for
$m=N$  and some well chosen parameters  $\alpha$ and $\beta$. Equation
(\ref{eq:ynen})  provides the  required upper  bound  on $e_N$,  which
fixes our  choice for $\alpha$:  $$|e_N| \le \alpha:=\frac{5}{2} \cdot
\frac{\epsilon  \cdot   e^{\varphi  \cot  t}}{1-e^{\varphi  \cdot \cot
t}}\,.$$    We  now focus         on    the second     condition    in
Lemma~\ref{lem:criterion}. From \eqref{eq:value_N} and (\ref{eq:ynen})
we have, for $\epsilon>0$ small enough,
\begin{equation*}
|y| + \frac{\alpha} 2 \le 1- \epsilon \cdot \pran{\cos t  - \frac{5}{4} \cdot \frac {e^{\varphi \cdot \cot t}} {1-e^{\varphi \cdot \cot t}}} +O(\epsilon^2).
\end{equation*}
Next, one can verify that there  exists $\delta_0>0$ such that for all 
$t\in [-\pi/4-\delta_0,-\pi/4  +  \delta_0]$, we have  $$\cos t- \frac
{5}{4}\cdot \frac {e^{\varphi \cdot  \cot t}} {1-e^{\varphi \cdot \cot
t}}>\frac 1{4}.$$ Thus,  for   all $\epsilon>0$ small enough,   we can
choose   $\beta\in(0,1)$, so that   the     first two conditions    in
Lemma~\ref{lem:criterion} are satisfied; namely,
\begin{equation}\label{eq:crit_values}|e_N| \le\alpha =\frac{5}{2} \cdot \frac{\epsilon \cdot  e^{\varphi \cot t}}{1-e^{\varphi \cdot \cot t}} \qquad \mbox{and} \qquad |y| + \frac{\alpha}2 < \beta:=1- \frac {\epsilon}{4}.\end{equation}
One then easily verifies that the third condition also holds for small enough $\epsilon>0$: here,  $\alpha(1-\beta)=\Omega(\epsilon^2)$, and, by (\ref{eq:value_N}), we have $(|z|^2/\rho)^N=o(\epsilon^2)$. So
for $\epsilon$ small enough, $(|z|^2/\rho)^N < \alpha (1-\beta)$. This shows that the criterion for convergence of Lemma~\ref{lem:criterion} is satisfied with values of $\alpha$ and $\beta$ specified in (\ref{eq:crit_values}). As a consequence, $e_h(z)\to 0$ for all $z$ in a small enough sandclock.
\end{proof}

\section{\bf Main approximation}\label{sec:estimates_eh}

In this section, we 
develop precise quantitative estimates of $e_h(z)$ near the singularity~$\rho$
and in a sandclock; these  estimates serve as the main ingredient 
required for developing limit laws for height in the next section.

\begin{proposition}[Main estimate for $e_h$ in a sandclock]\label{prop:eh_sim}
There exist $r_1,\theta_1>0$ and $K, K'>0$ such that for all $z\in \mathcal S(r_1,\theta_1)$ and all $h\ge 1$,
\begin{equation}\label{prop3}
\frac{y^h}{e_h} = \frac 1 2 \frac{1-y^h}{1-y} + R_h(z), \quad \mbox{where} \quad |R_h(z)| \le K \min\left\{\log \frac 1{1-|y|}, \log(1+h)\right\}.
\end{equation}
Furthermore, $|R_h-R_{h+1}|<K'/h$.
\end{proposition}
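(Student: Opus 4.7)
The plan is to begin with the extended alternative recurrence of Lemma~\ref{lem:rec_alt}, Equation~\eqref{eq:alt_rec}, which expresses $y^h/e_h$ as the near-target main term $\frac{1}{2y}\frac{1-y^h}{1-y}$ plus three auxiliary pieces: the constant $1/e_0$; the P\'olya sum $-\sum_{i=0}^{h-1} y^{i-1} e_i(z^2)/(2 e_i^2)$; and the ``main correction'' $\frac{1}{4y^2}\sum_{i=1}^{h-1} y^i e_i\cdot[1-e_i(z^2)/e_i^2]^2/(1-\tfrac{e_i}{2y}[1-e_i(z^2)/e_i^2])$. Replacing the prefactor $1/(2y)$ by $1/2$ absorbs the bounded discrepancy $(1-y^h)/(2y)=O(1)$, so the task reduces to showing that each of the three auxiliary pieces is $O(\log\min(h,1/(1-|y|)))$ uniformly on a sufficiently small sandclock.

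The term $1/e_0$ is $O(1)$ by the continuity bound~\eqref{e0sand}. For the P\'olya sum, Lemma~\ref{lem:polya_term} directly gives a geometrically decaying bound of order $(4/5)^i$ on each summand as long as $i\le N(z)$, so this portion contributes $O(1)$; for the tail $i>N(z)$ (when $h>N$), the convergence supplied by Proposition~\ref{prop:bowtie} together with the geometric upper bound $|e_i(z^2)|\le(|z|^2/\rho)^i/\sqrt i$ from Lemma~\ref{lem:bound1} keeps the sum geometric in a sufficiently thin sandclock, since $|z|^2/\rho$ can be made strictly smaller than the decay rate of $e_i$. For the main correction, the denominators are kept uniformly bounded away from zero by Lemmas~\ref{lem:init} and~\ref{lem:polya_term}, reducing the task to bounding $\sum|y^i e_i|$. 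The lower bound $|e_i|\ge|y|^{i+1}/(2(i+1))$ from Lemma~\ref{lem:init}, together with a matching upper bound $|e_i|=O(|y|^i/i)$ established either by an independent iteration argument parallel to part~(iii) of Lemma~\ref{lem:init} or by bootstrapping the very estimate being proved at smaller indices, yields $|y^i e_i|=O(1/i)$ in the regime $i\lesssim N(z)$ and exponential decay beyond, whence $\sum|y^i e_i|=O(\log\min(h,N(z)))=O(\log\min(h,1/(1-|y|)))$.

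For the increment bound, the one-step identity derived in the proof of Lemma~\ref{lem:rec_alt} gives
\[
R_{h+1}-R_h=\frac{y^{h-1}}{2}\cdot\frac{1-e_h(z^2)/e_h^2}{1-\tfrac{e_h}{2y}\left[1-e_h(z^2)/e_h^2\right]}-\frac{y^h}{2}=\frac{y^{h-1}(1-y)}{2}+O\bigl(|y|^{h-1}|e_h|\bigr)+O\bigl(|y|^{h-1}(4/5)^h\bigr),
\]
by a first-order expansion of the middle expression. Each of these three terms is uniformly $O(1/h)$: the first because $|1-y|\cdot|y|^{h-1}$ equals $|1-y|$ when $h(1-|y|)$ is bounded and is exponentially small otherwise; the second by the upper bound $|e_h|=O(|y|^h/h)$; and the third trivially geometric.

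The principal obstacle is establishing the matching upper bound $|e_i|=O(|y|^i/i)$ throughout the sandclock, which powers both of the delicate estimates above. This requires transferring the pure-iteration analysis of~\cite{FlOd82} to the complex setting while contending with the P\'olya perturbation $e_i(z^2)/2$ in~\eqref{eq:rec_eh}, whose non-zero argument interacts with that of $e_i$; the bounds of Lemma~\ref{lem:eh_rho2} and the argument estimate $0\le\arg(e_h)\le(h+2)\arg(y)$ of Lemma~\ref{lem:init} must be combined to rule out destructive cancellations that could degrade the expected order of decay.
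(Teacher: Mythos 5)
Your overall plan matches the paper's: decompose $y^h/e_h$ via the extended alternative recurrence~\eqref{eq:alt_rec}, absorb the $1/(2y)\mapsto 1/2$ discrepancy into $R_h$, bound the $1/e_0$ and P\'olya-sum pieces by constants, and control the ``main correction'' sum through estimates on $|y^i e_i|$. Your computation of $R_{h+1}-R_h$ via the one-step form of the recurrence is correct and in fact more careful than the paper's displayed formula, which silently drops the $-\tfrac12 y^{h-1}(1-y)$ piece coming from the $1/(2y)\to 1/2$ adjustment (that piece is $O(1/h)$ anyway, as you observe).

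The genuine gap is precisely the one you flag as ``the principal obstacle'': the uniform upper bound $|e_h(z)|=O(1/h)$ on a fixed sandclock, which in the paper is a standalone result (Lemma~\ref{lem:up_uniform}) proved \emph{before} Proposition~\ref{prop:eh_sim}. Everything delicate in your proposal hinges on it: the logarithmic bound on $\sum |y^i e_i|$, the $O(1/h)$ control of the second term in the increment, and the ``geometric decay beyond $N$'' for the P\'olya sum (which also needs the uniform \emph{lower} bound of Lemma~\ref{lem:low2}, not just the lower bound of Lemma~\ref{lem:init}, since the latter only holds for $i\le N$). The paper's proof of Lemma~\ref{lem:up_uniform} has real content — lower-bounding $|y^h/e_h|$ via~\eqref{eq:rec_labeled}, then a separate quantitative estimate of $|y|^h$ via convexity ($|y|^h\le 1-\delta h\epsilon$ for $h\le N'$), and a case split at $h\approx N$ to cover all $h$ — none of which appears in your sketch. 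The alternative you float, ``bootstrapping the very estimate being proved at smaller indices,'' is circular unless set up as a genuine strong induction with a verification that the implied constants close; since $R_i$ has modulus $O(\log i)$ which is $o(i)$, such an induction is plausible in principle, but it is not a free observation and would need to be carried out. Also, the claimed form of the upper bound, $|e_i|=O(|y|^i/i)$, is too strong to hold uniformly: for $i\gg 1/|1-y|$ the main estimate gives $|e_i|\approx 2|1-y|\,|y|^i\gg |y|^i/i$; what holds (and suffices) is $|e_i|=O(1/i)$, the form the paper actually proves. So the architecture is right, but the load-bearing estimate remains to be established.
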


In order to prove this proposition,
we need a better control
on  error terms, which can be achieved by extending  the bounds obtained  in
Section~\ref{sec:singularity}  for  $h>N$,  knowing now  that    the $e_h$   converge
(Proposition~\ref{prop:bowtie}).    The  proof   
requires   the  bounds   to  be
\emph{uniform} both in the distance  to the  singularity $|z-\rho|$ \emph{and}  in
the height~$h$, as expressed by Lemmas~\ref{lem:low2} and~\ref{lem:up_uniform} below. 
The bound~(\ref{eq:estim}) 
below 
 serves as a useful complement to the lower bound in (\ref{eq:bound_arg}),
only holds for $h\le N$.

\begin{lemma}[Uniform lower bound for $|e_h|$]\label{lem:low2}
For any $\delta\in(0,1)$, there exist constants $r_1,\theta_1>0$ such that
 if $z\in \mathcal S(r_1,\theta_1)$ then one has 
\begin{equation}\label{eq:estim}|e_h(z)|\ge \frac{(1-\delta)^{h+2}}{2(h+1)},
\quad\hbox{for all $h\ge 0$}.
\end{equation}
\end{lemma}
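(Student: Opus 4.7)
The plan is to combine the lower bound already supplied by Lemma~\ref{lem:init} in the initial range $h\le N(z)$ with an inductive extension, anchored on the main recurrence~\eqref{eq:rec_eh}, that propagates the bound to all $h>N(z)$. Since the target is monotone decreasing in $\delta$, I restrict without loss of generality to small $\delta$; the sandclock radius $r_1$ will be tuned as a function of~$\delta$.

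For $h\le N(z)$, continuity of $y$ at $\rho$ together with $y(\rho)=1$ allows me to choose $r_1,\theta_1$ small enough that $|y(z)|\ge 1-\delta$ throughout $\mathcal{S}(r_1,\theta_1)$. Lemma~\ref{lem:init} then delivers at once
\[
|e_h(z)|\;\ge\;\frac{|y(z)|^{h+1}}{2(h+1)}\;\ge\;\frac{(1-\delta)^{h+1}}{2(h+1)}\;\ge\;\frac{(1-\delta)^{h+2}}{2(h+1)}.
\]

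For $h>N(z)$, I would induct on $h$, the value at $h=N(z)$ being furnished by Stage~1. The main recurrence~\eqref{eq:rec_eh} and the reverse triangle inequality give
\[
|e_{h+1}|\;\ge\;|y|\,|e_h|\,\bigl|1-e_h/(2y)\bigr|\;-\;\tfrac12|e_h(z^2)|.
\]
Once $h>N(z)$, Proposition~\ref{prop:bowtie} together with Lemma~\ref{lem:criterion} guarantees $|e_h|\le Ch\beta_0^h$ for some $\beta_0<1$, so that $|e_h/(2y)|$ is small and $|1-e_h/(2y)|\ge 1-\delta/2$ in a small enough sandclock. Combined with $|y|\ge 1-\delta/2$, this yields the inductive recurrence $|e_{h+1}|\ge (1-\delta)\,|e_h|-\tfrac12|e_h(z^2)|$. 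Inserting the hypothesis and the P\'olya estimate $|e_h(z^2)|\le(|z|^2/\rho)^h$ from Lemma~\ref{lem:bound1}, the target at level $h+1$ reduces to
\[
(|z|^2/\rho)^h\,(h+1)(h+2)\;\le\;(1-\delta)^{h+3},
\]
which, since $|z|^2/\rho\approx\rho<1-\delta$ for small $\delta$, holds for all $h\ge h^\star(\delta)$ with some threshold $h^\star(\delta)$. Because $N(z)\to\infty$ as $z\to\rho$ by~\eqref{defN}, I can further constrain $r_1$ so that $N(z)\ge h^\star(\delta)$ throughout $\mathcal{S}(r_1,\theta_1)$; the induction then launches already in the asymptotic regime.

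The main obstacle is precisely the interplay between inductive slack and P\'olya correction. The slack between $(1-\delta)^{h+3}/(2(h+1))$ and $(1-\delta)^{h+3}/(2(h+2))$ is only $(1-\delta)^{h+3}/(2(h+1)(h+2))$, itself decaying in~$h$, so the P\'olya correction must decay strictly faster. This is indeed the case: its rate $(|z|^2/\rho)^h\approx \rho^h$ is geometric with base strictly smaller than $1-\delta$, while the slack decays only as $(1-\delta)^h/h^2$. The calibration $N(z)\ge h^\star(\delta)$ ensures the induction starts past the crossover, and this is what forces $r_1$ to be taken small as a function of~$\delta$.
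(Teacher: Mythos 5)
Your two-stage structure---anchor on Lemma~\ref{lem:init} for $h\le N(z)$, then propagate forward---is exactly the paper's. Stage~1 is identical. In Stage~2 you induct directly on $|e_h|\ge (1-\delta)^{h+2}/(2(h+1))$, whereas the paper normalizes by $(\rho+r)^h$ and inducts on the stable invariant $|e_h|/(\rho+r)^h>2/\delta$, recovering the asserted bound by unfolding and reinserting the Stage~1 value at $h=N$. Your variant works but is more delicate because your inductive quantity decays, which is what forces the comparison $(1-\delta)^{h+3}\ge (h+1)(h+2)(|z|^2/\rho)^h$; your observation that $N(z)\to\infty$ (from~\eqref{defN}) lets one shrink the sandclock to start past the crossover $h^\star(\delta)$, and the WLOG reduction to small $\delta$ is legitimate since $(1-\delta)^{h+2}$ is decreasing in~$\delta$.

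There is, however, a genuine misstep in how you justify the factor $|1-e_h/(2y)|\ge 1-\delta/2$ for $h>N(z)$. You invoke the geometric tail bound $|e_h|\le Ch\beta_0^h$ from Equation~\eqref{crit2} of Lemma~\ref{lem:criterion}. That bound is true but useless at the very place you need it. In the sandclock one has $\beta_0=\max(\beta,|z|^2/\rho)=\beta=1-\epsilon/4$ (from~\eqref{eq:crit_values}) and $N(z)\sim -\varphi/(\epsilon\sin t)\asymp 1/\epsilon$ (from~\eqref{eq:value_N}), so at $h\approx N$ one finds $h\beta_0^h\asymp \epsilon^{-1}e^{-c}$, which \emph{diverges} as $\epsilon\to 0$. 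Shrinking the sandclock therefore makes your cited upper bound weaker, not tighter, and it cannot deliver $|e_h/(2y)|\le\delta/2$. The bound you actually need---and the one the paper's proof uses---is $|e_h|\le\alpha$ for \emph{all} $h\ge N$, with $\alpha$ as in~\eqref{eq:crit_values}. This is an immediate consequence of Part~$(ii)$ of the proof of Lemma~\ref{lem:criterion}: once the three conditions of~\eqref{eq:criterion} hold at $m=N$ they propagate to every $h\ge N$, and the first of them is precisely $|e_h|<\alpha$. Since $\alpha=O(\epsilon)=O(|1-y|)$, this quantity does shrink with the sandclock, giving $|e_h/(2y)|\le\delta/2$ uniformly for $h\ge N$. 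With this substitution, your argument closes correctly.
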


\begin{proof}
Let $\delta\in(0,1)$. We have $|y|>1-\delta/4$ provided $r:=|z-\rho|<r_0$ small enough. Then, by Lemma~\ref{lem:init}, the estimate \eqref{eq:estim} holds for $h\le N$. 

We thus only need to consider now the case $h> N$. 
Assume further that  $z\in \mathcal S(r_0,\theta_0)$, as in Proposition~\ref{prop:bowtie}. 
Then, $|e_h|\le \alpha$, for $\alpha$ as in (\ref{eq:crit_values}). 
The recurrence relation (\ref{eq:rec_eh}) implies
\begin{equation}\label{7bis}
|e_{h+1}|\ge |y| \, |e_h|\,\pran{1-\frac{|e_h|}{2|y|}} - \frac{|e_h(z^2)|}2
\ge |y| \pran{1-\frac{\alpha}{2|y|}} \cdot |e_h| - \frac{|e_h(z^2)|}2.
\end{equation}
However, by~\eqref{eq:crit_values}, we have
$|y|+ \alpha /2<1$ so that $|y|-\alpha/2>1-\delta/2$.
Lemma~\ref{lem:bound1}, which serves  to bound the P\'olya term $|e_h(z^2)|$, then yields
$|e_{h+1}|\ge (1-\delta/2) |e_h| - (\rho+r)^h.$
Therefore dividing both side of the recurrence relation by $(\rho+r)^{h+1}$, we obtain  for $h\ge N$,
\begin{equation}\label{eq:induc}
\frac{|e_{h+1}|}{(\rho+r)^{h+1}} \ge \pran{\frac{1-\delta/2}{\rho+r}} \frac{|e_h|}{(\rho+r)^h} - \frac 1{\rho+r}.
\end{equation}

The remainder of the proof then consists in extracting the desired bound (\ref{eq:estim}) 
from the latter relation by unfolding the recurrence from $h$ down to $N$. 
To this effect, recall that, by Lemma~\ref{lem:init},
$|e_N|> y^{N+1}/(2(N+1))$ and $N<K/\sqrt r$, for some constant $K$.
Hence, we can set $r$ to a value small enough that, 
\begin{equation}\label{eq:induc_N}
\frac{|e_N|}{(\rho+r)^N}>\frac 2 \delta \qquad \mbox{and} \qquad \frac{1-\delta}{\rho+r}>1.
\end{equation}
Then, for such $r$,
using (\ref{eq:induc}) and (\ref{eq:induc_N}), it is easly verified by induction on~$h$
(with $h\ge N$) that $|e_h|/(\rho+r)^h>2/\delta$.
Using this last bound in (\ref{eq:induc}) gives, for $h\ge N$:
$$\frac{|e_{h+1}|}{(\rho+r)^{h+1}} ~\ge~ \pran{\frac{1-\delta}{\rho+r}} \frac{|e_h|}{(\rho+r)^h} ~\ge~  \pran{\frac{1-\delta}{\rho+r}}^{h+1-N} \frac{|e_N|}{(\rho+r)^N}.$$

We can finally
recover the information on $|e_h|$ by means of the lower bound for $|e_N|$ in Lemma~\ref{lem:init}. 
For all $h> N$, we then have
$$|e_h| ~\ge~ |e_N| \cdot (1-\delta)^{h-N+1} ~\ge~ \frac{(1-\delta)^{h+2}}{2(N+1)} ~\ge~ \frac{(1-\delta)^{h+2}}{2(h+1)},$$
and the proof is complete.
\end{proof}

We can now develop a uniform upper bound for $|e_h|$ when $z\in \mathcal S(r_1,\theta_1)$.

\begin{lemma}[Uniform upper bound for $|e_h(z)|$]\label{lem:up_uniform}
There exist constants $r_1,\theta_1>0$ and $c_1>0$
such that, for any $h\ge 1$, and $z\in \mathcal S(r_1,\theta_1)$, we have 
\[
|e_h(z)|\le \frac{c_1}{h}\,.
\]
\end{lemma}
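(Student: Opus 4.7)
The plan is to apply the extended alternative recurrence~\eqref{eq:alt_rec} of Lemma~\ref{lem:rec_alt}, which writes $y^h/e_h = M+A+B$ with main term $M = \frac{1}{2y}\cdot\frac{1-y^h}{1-y}$. The strategy is to lower bound $|M|$ and show that $|A|$ and $|B|$ grow strictly slower than $|M|$; combined with the decay of $|y|^h$, this will yield $|y^h/e_h| \ge c\,\min(h, 1/|1-y|)$, which directly translates into $|e_h| \le c_1/h$.

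Parametrize $1-y(z) = \epsilon e^{it}$ as in Proposition~\ref{prop:bowtie}, so $\epsilon \to 0$ and $t$ stays close to $-\pi/4$ as $z\to\rho$ in the sandclock; in particular $|y|<1$ there, with $|y|^h \le \exp(-h\epsilon\cos t)(1+o(1))$. Using $|1-y^h|\ge 1-|y|^h$,
\[
\left|\frac{1-y^h}{1-y}\right| \ge \frac{1-e^{-ch\epsilon}}{\epsilon} \ge c'\min(h,1/\epsilon),
\]
uniformly in $h$ and in a small enough sandclock. For the P\'olya ratio $|e_i(z^2)/e_i^2|$ appearing in $A$ and $B$, Lemma~\ref{lem:polya_term} gives geometric decay for $i\le N(z)$, while for $i>N(z)$ the uniform lower bound $|e_i|\ge (1-\delta)^{i+2}/(2(i+1))$ of Lemma~\ref{lem:low2} combined with $|e_i(z^2)| \le (|z|^2/\rho)^i/\sqrt{i}$ (Lemma~\ref{lem:bound1}) again yields geometric decay, provided $\delta$ and $r_1$ are small enough. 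Consequently $|A| = O(1)$.

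For $B$, the denominator $1-(e_i/2y)[1-e_i(z^2)/e_i^2]$ stays bounded away from~$0$, since $|e_i|\le 1/5$ for $6\le i\le N$ by Lemma~\ref{lem:init} and $|e_i| = O(\epsilon)$ for $i>N$ by Proposition~\ref{prop:bowtie}. Splitting the sum in $B$ at an index $i_0$ (beyond which $|e_i(z^2)/e_i^2|\le 1/100$) and again at $i=N$, the first few indices contribute $O(1)$, the middle range contributes at most $(1/5+o(1))\,|M|$ thanks to the improved bound of Lemma~\ref{lem:init}, and the tail $i>N$ contributes $O(1)$ since $|y^i e_i|$ decays geometrically. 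Thus $|B| \le (1/5+o(1))|M| + O(1)$, and one obtains $|y^h/e_h| \ge (4/5)|M|-O(1) \ge c''\min(h,1/\epsilon)$ whenever $\min(h,1/\epsilon)$ is large enough. Therefore
\[
|e_h(z)| \le \frac{|y(z)|^h}{c''\min(h,1/\epsilon)}.
\]
When $h\epsilon\le 1$, $|y|^h = O(1)$ and the bound is $O(1/h)$; when $h\epsilon > 1$, $|y|^h \le e^{-ch\epsilon}$, so $|e_h| \le \epsilon\,e^{-ch\epsilon} \le 1/(ech)$ via the boundedness of $u\mapsto ue^{-u}$ on $[0,\infty)$. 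The initial range of small $h$ is absorbed into $c_1$ using $|e_h|\le 1/2$ from Lemma~\ref{lem:init}.

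The main obstacle is controlling $|B|$ by a strict fraction of $|M|$ uniformly in $h$, since a naive bound would give $|B|$ of the same order as $|M|$. This forces the combined use of the tight geometric decay of the P\'olya ratio (Lemma~\ref{lem:polya_term}, making $[1-e_i(z^2)/e_i^2]^2 \approx 1$) together with the refined bound $|e_i|\le 1/5$ of Lemma~\ref{lem:init}, and the simultaneous control of both across the transition $h\approx N(z)$ between the initial and convergence regimes.
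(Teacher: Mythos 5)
Your proposal is correct and follows essentially the same path as the paper's proof: both start from the extended alternative recurrence $y^h/e_h = M + A + B$, lower-bound the modulus of $M$ by roughly $(1-|y|^h)/(2\epsilon)$ with $1-y=\epsilon e^{it}$, show $|A|=O(1)$ using the geometric decay of the P\'olya ratio (via Lemmas~\ref{lem:bound1}, \ref{lem:low2} and/or~\ref{lem:polya_term}), bound $|B|$ by a strict fraction of $|M|$ plus $O(1)$, and then translate the resulting inequality $|y^h/e_h|\gtrsim \min(h,1/\epsilon)$ into $|e_h|=O(1/h)$ by a case analysis. The only cosmetic difference is in that final step: you split at $h\epsilon=1$ and optimize $\epsilon\,e^{-ch\epsilon}\le 1/(ech)$, while the paper splits at $h\approx N(z)$, first showing $|y|^h\le 1-\delta h\epsilon$ for $h\le N'$ via convexity of $|y|^h$, then handling $h\ge N$ by maximizing $\epsilon(1-\epsilon/2)^h$ over $\epsilon$; these are two routine encodings of the same estimate.

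One small caveat worth flagging: your claimed constant $(1/5+o(1))|M|$ for the middle range of $B$ is sharper than what the paper actually establishes (the paper's bound on the middle range is roughly $\frac{2}{5\cos t}|M|\approx 0.57|M|$, since it only uses $|e_i|\le 1/2$ there), but your tighter bound is legitimately obtained from the $|e_i|<1/5$ improvement of Lemma~\ref{lem:init}; in either case the fraction is strictly below $1$, which is all that is needed. Also, your displayed $|y|^h\le\exp(-h\epsilon\cos t)(1+o(1))$ should really be stated as $|y|^h\le\exp(-c h\epsilon)$ for some fixed $c>0$ and $\epsilon$ small (since the multiplicative $(1+o(1))$ is not uniform in $h$), but this is exactly how you then use it, so there is no gap.
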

\begin{proof}
Write $1-y=\epsilon e^{it}$ for some $\epsilon>0$ and $t$. 
It suffices to prove that the result holds for all such $z$ provided $\epsilon$ 
is small enough and $t$ close enough to $-\pi/4$. Observe that $\epsilon$ is of the order 
of $\sqrt{z-\rho}$. 

Our starting point is again \eqref{eq:rec_labeled}, which we now use to obtain an upper bound on $|e_h|$. The first term $M$ is such that 
\begin{equation}\label{eq:unif1}
|M|=\left|\frac 1 {2 y} \frac{1-y^h}{1-y}\right|= \frac 1 {2|y|}\cdot \frac{|1-y^h|}{|1-y|}\ge \frac{1-|y|^h}{2|1-y|}= \frac{1-|y|^h}{2\epsilon}.
\end{equation}

On the other hand, for all $h\ge 0$ and $\epsilon>0$ small enough, by Lemmas \ref{lem:bound1} and~\ref{lem:low2}, the first error term $A$ in \eqref{eq:rec_labeled} satisfies
$$|A|
\le \frac 1 {|e_0|} + \frac 1 {2 |y|} \sum_{i=0}^{h-1} \left |\frac {e_i(z^2)}{e_i(z)^2} \right| \le \frac 1 {|e_0|} + \frac 1{2(1-\epsilon)} \sum_{i=0}^\infty 4 (i+1)^2\pran{\frac{\rho+\epsilon}{(1-\epsilon)^2}}^i.$$
There exists $\epsilon_1>0$ such that for all $\epsilon<\epsilon_1$ the geometric term in the series above is at most $2\rho<1$; together with the fact that $e_0=y-z=1-\rho+O(\epsilon)$, this implies that $|A|\le 11/(1-2\rho)^3$.

We now bound the second error term $B$ in (\ref{eq:rec_labeled}). Note first that,
 for all $\epsilon$ small enough, we have $|e_i|\le 1/2$ for all $i\ge 0$: 
for $h\le N$, this is implied by Lemma~\ref{lem:init}, while for $h\ge N$ we have $|e_h|< \alpha< 2(1-|y|)=O(\epsilon)$. Furthermore, by Lemmas~\ref{lem:bound1} and~\ref{lem:low2}, for all $\epsilon<\epsilon_1$ small enough $|e_i(z^2)|/|e_i(z)^2|< 1/100,$
for $i\ge h_0$ depending only on $\epsilon_1$. It follows that 
\[
|B|
~\le~ \frac 3 2  h_0 + \frac18 \cdot \frac{(101/100)^2}{1-\frac 1 4 \cdot \frac {101}{100}} \cdot \sum_{i=h_0+1}^h |y|^{i-2}
~\le~ \frac 3 2 h_0 + \frac 1 5 \cdot \frac{1-|y|^h}{1-|y|} \, \label{eq:unif3}.
\]
As a consequence, using Lemma~\ref{lem:rec_alt}, and combining the bounds 
just obtained on $|A|$ and $|B|$ with (\ref{eq:unif1}), one sees that, for all $h\ge 0$,
\begin{equation}\label{eq:last}
\frac{|y^h|}{|e_h|} \ge \frac{1-|y|^h}{\epsilon} \pran{\frac 1 2 - \frac 1{5 \cos t}} - h_0 - \frac {11} {(1-2\rho)^3}> \frac{1-|y|^h}{5\epsilon}  - h_0 - \frac {11}{(1-2\rho)^3},
\end{equation}
for $|\pi/4+t|$ small enough.

The relation above provides a decent upper bound on $|e_h|$ provided that $|y^h|$ is small enough. With this in mind, we now prove an upper bound on $|y^h|$ for all $h\ge 0$. First, when $h$ is not too large, $|y|^h$ should decrease at least linearly in $h$: we show that for some small enough $\delta>0$, $|y|^h\le 1-\delta h \epsilon$ for all $h\le N$. For some fixed $z$, the sequence $(|y|^h, h\ge 0)$ is convex; thus if $|y|^{N'}\le 1-\delta N' \epsilon$ for some $N'\ge N$, then $|y|^h\le 1-\delta h \epsilon$ for all $0\le h\le N$. Recall that $\varphi=\arccos(1/4)$; we now prove that we might take $N':=-2\varphi/(\epsilon \sin t)$. By (\ref{eq:value_N}), for $\epsilon$ small enough, $|y| \le 1- \frac \epsilon 2 \cos t$ and $N < N'$ and
$$|y|^{N'}\le \pran{1+\frac{\epsilon\cos t}2}^{N'} \le \exp\pran{-\frac{N'\epsilon \cos t}2} = e^{\varphi \cot t}.$$
However, for $|t+\pi/4|<1/100$, then $e^{\varphi \cot t}<1/2$, so that we can pick $\delta>0$ such that
$$e^{\varphi \cot t}< 1 + \frac  {2\delta\varphi} {\sin t}= 1- \delta N' \epsilon.$$
%
It follows by (\ref{eq:last}) that there exists $\delta>0$ small enough such that
$|e_h| \le 10/(\delta h)$, for $0 \le h \le N'$,
for all $|\pi/4+t|<1/100$ and $\epsilon>0$ small enough.

On the other hand, if $h\ge N$ and $\epsilon>0$ is small enough and $|\pi/4+t|<1/100$, then $1-|y|^h\ge 1-2e^{\varphi \cot t}> 1/4$ by (\ref{eq:value_N}).  As a consequence,
$$
|e_h| \le 40 \epsilon |y|^h \le 40 \epsilon (1-\epsilon\cos t+O(\epsilon^2))^h \le 40 \epsilon (1-\epsilon/2)^h,
$$
for all $\epsilon$ small enough and $t$ close enough to $-\pi/4$. Now, seen as a function of $\epsilon$, the maximum of the right-hand side above is obtained for $\epsilon=2/(h+1)$, which implies that
$|e_h| \le 80/(h+1)$, for $h \ge N.$
Finally, by Lemma~\ref{lem:init}, and the bounds above, the result follows by choosing $c_1=\max\{h_0, 10/\delta, 80\}$.
\end{proof}

\begin{proof}[Proof of Proposition~\ref{prop:eh_sim}]The proof consists in using Lemma~\ref{lem:up_uniform} above to bound the error terms in \eqref{eq:rec_labeled} for $z\in \mathcal S(r_2,\theta_2)$, with $r_2=\min\{r_0,r_1\}$ and $\theta_2=\min\{\theta_0,\theta_1\}$. 
For some constants $c_2$ and $c_3$, we have
\[
|A|+|B|\le\frac {11}{(1-2\rho)^3}+c_2 \pran{1 + \sum_{i=1}^{h}\frac{|y^i|}{i}} \le  c_3\min\left\{\log\pran{\frac 1{1-|y|}}, 1+\log h \right\},
\]
which proves the main statement of
Proposition~\ref{prop:eh_sim}. Finally, since $A$ and $B$ are partial sums, we obtain 
$$|R_h-R_{h+1}| =  \left|\frac{y^{h-1}}2  \frac{e_h(z^2)}{e_h(z)^2} + \frac{y^{h-2}}4 e_h \left[1-\frac{e_h(z^2)}{e_h(z)^2}\right]^2  \pran{1-\frac {e_h}{2y} \left[1-\frac{e_h(z^2)}{e_h(z)^2}\right]}^{-1} \right|,$$
a quantity which is easily seen to be uniformly $O(1/h)$,
thanks to Lemmas~\ref{lem:low2} and \ref{lem:up_uniform}.
\end{proof}

\section{\bf Asymptotic analysis and distribution estimates}\label{asy-sec}

The basis of our estimates relative to the distribution of height is 
the main approximation of~$e_h$ in Proposition~\ref{prop:eh_sim},
which is valid in a \emph{fixed sandclock} at~$\rho$. Given its importance,
we repeat it under the simplified form:
\begin{equation}\label{mainap}
e_h(z)\equiv~y(z)-y_h(z)\approx {2}\frac{1-y}{1-y^h}y^h.
\end{equation}
(Here, the symbol~``$\approx$'' is to be loosely interpreted in the sense of
``approximately equal''.) This approximation acquires a precise meaning,
when $z$ remains fixed and $h$ tends to infinity, in which case  
it expresses the geometric convergence of~$e_h(z)$ to~0 (since $|y|<1$);
also, when~$h$ remains fixed and  $z$ tends to~$\rho$,
it reduces to  the  numerical approximation $e_h(\rho)\approx 2/h$,
whose accuracy increases with increasing values of~$h$. In other words,
the precise version of~\eqref{mainap} provided by Proposition~\ref{prop:eh_sim}
consistently covers, in a uniform manner, the case when \emph{both} $z\to\rho$
and~$h\to\infty$. (Analogues of the formula~\eqref{mainap} surface in
the case of general plane trees in~\cite{BrKnRi72}, plane binary trees~\cite{FlOd82},
and labelled Cayley trees in~\cite{ReSz67}.)

The exploitation of the enhanced versions of~\eqref{mainap} relies on
Cauchy's coefficient formula~\eqref{eq:cauchy}.
The contour~$\gamma$ in Cauchy's integral~\eqref{eq:cauchy} will be
comprised of several arcs and line segments\footnote{
	In order to
	have well-defined determinations of square roots, 
	one may think of the two segments as in fact joined by 
	an infinitesimal arc of a circle that passes to the \emph{left} 
	of the singularity~$\rho$.
}  
that lie \emph{outside} of the disc $|z|\le\rho$ 
and taken in the union of a suitable sandclock
(as granted by Proposition~\ref{prop:eh_sim}) and of 
a tube,
overlapping 
with the sandclock (where properties of Proposition~\ref{prop:away} are in effect).
The strategy just described
belongs to the general orbit 
of singularity analysis methods expounded in~\cite[Ch.~VI--VII]{FlSe09}.
We propose to apply it to the height-related generating functions~$e_h(z)$
(weak limit, Theorem~\ref{clt}) and~$e_{h-1}(z)-e_{h}(z)$ (local limit law, Theorem~\ref{llt}). 

Before proceeding with the proof of Theorem~\ref{clt}, recall that we aim at showing that for any fixed~$x>0$, we have
\[
\lim_{n\to\infty} \Pr( H_n\ge \lambda^{-1}x \sqrt{n})=\Theta(x), 
\qquad \lambda:=\sqrt{2\rho+2\rho^2y'(\rho^2)},
\]
where \qquad $\ds
\Theta(x):=\sum_{k\ge1} 
(k^2 x^2-2)e^{-k^2 x^2/4}.
$

\begin{figure}\small

\begin{center}
\newcommand{\red}[1]{\color{red}{#1}}
\setlength{\unitlength}{1.1truecm}
\begin{picture}(8.7,6.0)
\put(0,0){\includegraphics[width=7truecm]{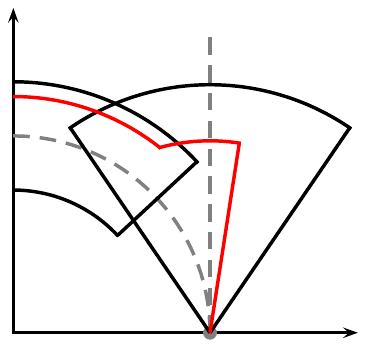}}
\put(0.2,0){$\bf 0$}
\put(4.0,0){$\rho$}
\thicklines
\put(0.0,0.35){\vector(0,1){4.0}}
\put(-0.4,2.4){$\rho_n$}
\put(3.4,5.7){$\Re(z)=\rho$}
\put(6.4,4.6){$\rho+r_2e^{i\pi/2-i\theta_1}$}
\put(6.15,4.55){\vector(-2,-1){2.0}}
\put(5.90,5.05){\vector(-2,-1){3.0}}
\put(6.1,5.15){$\rho+r_2e^{i\pi/2+i\theta_2}= \rho_n e^{i\eta_2}$}
\put(4.2,3.2){\red{\hbox{\fbox{$\gamma_1$}}}}
\put(3.7,3.8){\red{\hbox{\fbox{$\gamma_4$}}}}
\put(0.7,3.9){\red{\hbox{\fbox{$\gamma_3$}}}}
\put(2.05,3.15){\circle*{0.2}}
\put(2.05,3.3){$z_0$}
\put(2.75,3.45){\circle*{0.2}}
\put(2.85,3.20){$z_1$}
\end{picture}
\end{center}

\caption{\label{fig:tube-issue}
Fine details of the Cauchy integration contour~$\gamma$ in the vicinity of~$\rho$.}
\end{figure}

\begin{proof}[Proof of Theorem~\ref{clt}]
We aim at using  Cauchy's formula~\eqref{eq:cauchy} with a well-chosen\footnote{%
	It might be that none of the tubes corresponding to Proposition~\ref{prop:away}
includes  points to the right of the vertical line $\Re(z)=\rho$,
hence the need to insert ``joins'' $\gamma_4$ and $\gamma_5$.
(The discussion of this case was inadvertently omitted from 
the earlier version~\cite{BrFl08}.) An alternative would be to
make use of a contour that is squeezed in between the circle
$|z|=\rho$ and the vertical line $\Re(z)=\rho$ 
(this is done in~\cite{ReSz67}, where the circle itself is used);
 but then 
the near stationarity of the modulus 
of the Cauchy kernel, $|z|^{-n}$, makes it technically harder,
or at least less transparent, 
to translate approximations of generating functions
into coefficient estimates. 
}
integration    contour  ~$\gamma$.     The  reader    should   consult
Figures~\ref{fig:hankel} and~\ref{fig:tube-issue}.  First, we choose 
\emph{a priori} a sandclock $\cal S$, whose existence is granted by
Proposition~\ref{prop:bowtie} and such that the approximation properties of
Proposition~\ref{prop:eh_sim} hold. By design, this sandclock contains
\emph{in its interior} a small arc of the  circle $\{|z|=\rho\}$.
Choose arbitrarily a point $z_0$ on this small arc, with $z_0\not=\rho$, $\Im(z_0)>0$,
and  set $z_0=r_2e^{i\pi/2+i\theta_0}$.
Proposition~\ref{prop:away} guarantees the existence of a tube $\cal T$
that has $z_0$ \emph{in its interior} and for which the convergence $e_h\to0$ is ensured.
We have now determined a sandclock and a partially overlapping tube, whose union will be seen to
contain the contour~$\gamma$ (where $e_h\to0$) and whose intersection contains $z_0=r_2e^{i\theta_0}$.


The contour $\gamma$ is essentially a Hankel contour escaping from $\rho$ along 
rectilinear portions $\gamma_1$ and $\gamma_2$ such that
\[
\gamma_1=\bar\gamma_2=\left\{\rho+\xi e^{i\pi/2-i\theta_1}~:~ \xi\in [0,r_2]\right\},
\]
where $\theta_1$ is chosen positive and strictly less than the half-angle of the sandclock $\cal S$.
By design, the segments $\gamma_1$ and $\gamma_2$ lie entirely inside 
the sandclock $\mathcal S$.

The component~$\gamma_3$ of the contour is a subarc of the circle
\[
C_n:=\{z~:~|z|=\rho_n\}, \qquad\hbox{where}\quad 
\rho_n:=\rho\left(1+\frac{\log^2 n}{n}\right).
\]
Precisely, let $z_1\equiv z_1(n)$ be the intersection point in the upper half-plane of the circle $C_n$ and
the circle $\{|z|=r_2\}$. When $n$ gets large, this point~$z_1$ comes closer and closer to~$z_0$,
so that, for all~$n$ large enough, it must belong to the intersection $\cal S\cap \cal T$.
In other words, we can write
\[
z_1=\rho_n e^{i\eta_2}=\rho+r_2e^{i\pi/2+i\theta_2},
\]
where $\eta_2=\eta_2(n)$ and $\theta_2\equiv\theta_2(n)$ both depend on~$n$
and tend to finite limits as $n\to+\infty$ (in particular, $\theta_2\to\theta_0$).
Then we take
$$\gamma_3=\left\{\rho_n e^{i\theta}: \theta\in [\eta_2, 2\pi -\eta_2]\right\},$$
and for $n$ large enough, the arc~$\gamma_3$ entirely lies in the tube $\cal T$.


We can finally complete the contour to make it connected, with joining arcs $\gamma_4$ and $\gamma_5$,
which are arcs of $\{|z-\rho|=r_2\}$ defined by
$$\gamma_4 = \bar \gamma_5 =\{r_2 e^{i\pi/2 + i\theta}: \theta
\in[-\theta_1,\theta_2]\},$$
so that both arcs lie inside the sandclock~$\cal S$.

\smallskip
\emph{Outer circular arc $\gamma_3$.}
By Proposition~\ref{prop:away}, we have $e_h(z)\to0$ uniformly on $\gamma_3$ as $h\to\infty$. In particular, all moduli $|e_h(z)|$
are bounded by an absolute\footnote{In what follows, we use generically~$K,K_1,\ldots$ to denote 
\emph{absolute positive constants}, not necessarily of the same value at different occurrences.}
 constant $K$. On the other hand the Cauchy kernel $z^{-n}$ is small
on the contour, so that
\begin{equation}\label{outer}
\left|\int_{\gamma_3} e_h(z)\, \frac{dz}{z^{n+1}}\right|<K_1\rho^{-n} \exp\left(-\log^2 n\right).
\end{equation}

\smallskip
\emph{Join portions $\gamma_4,\gamma_5$.} By Proposition~\ref{prop:bowtie}, 
one has $e_h\to 0$ uniformly on $\gamma_4\cup \gamma_5$ as $h\to\infty$. In particular $|e_h(z)|\le K_2$ for some absolute constant $K_2$. By definition, for all $z\in\gamma_4\cup \gamma_5$, $|z|\ge \rho_n$ so that,
for the same reasons as in~\eqref{outer},
\begin{equation}\label{eq:join}
\left| \int_{\gamma_4\cup \gamma_5} e_h(z) \, \frac{dz}{z^{n+1}} \right| \le K_3 \rho^{-n} \exp(-\log^2 n).
\end{equation}

\smallskip
\emph{Outer rectilinar parts of $\gamma_1$ and $\gamma_2$.} 
Let $\mathcal D_n:=\{|z-\rho|\ge \delta_n\}$, 
with
\[
\delta_n=\frac{\log^2 n}{n}.
\]
Note that for $z\in \gamma_1 \cap \mathcal D_n$, we have $|z|\ge \rho + \delta_n \sin \theta_1$. For the same reason as before,
\begin{equation}\label{eq:outer_rect}
\left| \int_{(\gamma_1\cup \gamma_2)\cap \mathcal D_n} \, e_h(z) \, \frac{dz}{z^{n+1}} \right| \le K_4 \rho^{-n} \exp(-K'_4
\log^2 n).
\end{equation}

The total contribution  of the outer circular  arc $\gamma_3$, of both
join portions $\gamma_4$ and  $\gamma_5$, and of the outer rectilinear
parts $\gamma_1\cap  \mathcal D_n,\gamma_2\cap \mathcal  D_n$ are thus
\emph{exponentially small} compared to~$y_n$, hence totally negligible in the present
context.

\smallskip
\emph{Inner rectilinear parts of $\gamma_1$ and $\gamma_2$.} 
This is where action takes place.
From now on, we operate with the normalization
\[
h=\lambda^{-1}x\sqrt{n},
\]
where~$x$ is taken to range over a fixed compact interval of $\R_{>0}$.
We now focus on the portions of $\gamma_1$ and $\gamma_2$ lying outside $\mathcal D_n$.   We denote them 
by $\tilde \gamma_1$ and $\tilde \gamma_2$, respectively, and
note that all their points are at a distance from~$\rho$ that tends to~$0$,
as $n\to+\infty$. Our 
objective is to replace~$e_h$ by the simpler quantity
\begin{equation}\label{wheh}
\wh e_h(z) \equiv \wh e_h := 2\frac{1-y}{1-y^h} y^h,
\end{equation}
as suggested by Proposition~\ref{prop:eh_sim}.
Along~$\tilde \gamma_1,\tilde \gamma_2$, the singular expansion of $y(z)$ applies, so that
$1-y=O((\log n)/n^{1/2})$ and the error term~$R_h(z)$ from Equation~\eqref{prop3} is $O(\log n)$.
There results that $(1-y^h)/(1-y)$ is always 
 at least as large in modulus as $K_5\sqrt{n}/\log n$ (this, by a study of the variation of
$|1-e^{-h\tau}|/|1-e^{-\tau}|$), and we have
\begin{equation}\label{ap1}
\frac{y^h}{e_h}= 
\frac{y^h}{\wh e_h}\left(1+O\left(\frac{\log^2n}{\sqrt{n}}\right)\right).
\end{equation}

It proves convenient  to define
\begin{equation}\label{Ehn}
E(h,n):=\frac{1}{2i\pi} \int_{\tilde \gamma_1\cup\tilde \gamma_2} \wh e_h \, \frac{dz}{z^{n+1}},
\end{equation}
and to make the change of variables
\begin{equation}\label{chg}
z=\rho\left(1-\frac{t}{n}\right), \qquad dz=-\frac{\rho}{n}dt\,.
\end{equation}
With this rescaling,
the point~$t$ then starts from $-i\rho^{-1}n \delta_n e^{-i\theta_1}$, loops to the right of the origin, then steers away  to $i\rho^{-1}n\delta_n e^{i\theta_1}$.
Given the singular expansion of~$y(z)$ in~\eqref{singy},
we have on the small arcs~$\tilde \gamma_1,\tilde \gamma_2$,
\begin{equation}\label{ap11}
z^{-n}=\rho^{-n} e^t \left(1+O\left(\frac{\log^4 n}{n}\right)\right),
\qquad
y(z)= 1- \lambda \sqrt{\frac{t}{n}}+O\left(\frac{t}{n}\right),
\end{equation}
and, with $h=\lambda^{-1}x\sqrt{n}$ and $|t|\le K_6 \log^2n$, since $\delta_n=\log^2 n/n$:
\begin{equation}\label{ap12}
y^h =  \exp\left(- x\sqrt{t}\right)\left(1+O\left(\frac{t}{\sqrt{n}}\right)\right)
= \exp\left(- x\sqrt{t}\right)\left(1+O\left(\frac{\log^2 n}{\sqrt{n}}\right)\right).
\end{equation}
We also find\footnote{The expression $\log^\star n$ represents an unspecified positive
power of $\log n$.}, for the range of values of~$t$ corresponding to~$\tilde \gamma_1,\tilde \gamma_2$: 
\begin{eqnarray}\label{ap2}
\frac{1-y^h}{1-y}
&=&\frac{1-\exp(-x \sqrt t) (1+t/\sqrt n)}{\lambda \sqrt {t/n}} \pran{1+O\pran{\frac{\log^\star n}{\sqrt n}}}\nonumber \\
&=& \left[\sqrt n \cdot \frac{1-\exp(-x \sqrt t)}{\lambda \sqrt t} + O(\sqrt t)\right] \pran{1+O\pran{\frac{\log^\star n}{\sqrt n}}}\\
&=&\left[\sqrt{n}\cdot \frac{1-\exp(- x\sqrt{t})}{\lambda\sqrt{t}}\right]
\left(1+O\left(\frac{\log^\star n}{\sqrt{n}}\right)\right).\nonumber
\end{eqnarray}

The approximations~\eqref{ap11},~\eqref{ap12}, and~\eqref{ap2}
motivate 
considering, as an approximation of~$E(h,n)$ in~\eqref{Ehn}, the contour integral
\begin{equation}\label{Ix}
J(X):=\cau\int_\cal L \frac{\exp(-X\sqrt{t})}{1-\exp(-X\sqrt{t})}
\sqrt{t} e^t\, dt =\cau \sum_{k\ge1}\int_\cal L {\exp(-kX\sqrt{t})} \sqrt{t}
e^t\, dt,
\end{equation}
where $\cal L$ goes from $-\infty+i\infty$ to $-\infty-i\infty$ and winds to the 
right of the origin.
We now make $J(X)$ explicit.
Each integral on the right side of~\eqref{Ix} can be evaluated by the change of variables $w=i\sqrt{t}$, 
equivalently, $t=-w^2$. 
By completing the square and flattening the image contour~$\cal L'$ onto the real line,
we obtain:
\begin{equation}\label{Jx}
J(X)=\frac{1}{4\sqrt{\pi}}\sum_{k\ge1} e^{-k^2X^2/4}(k^2X^2-2).
\end{equation}

From the chain of approximations  in Equations~\eqref{wheh} to~\eqref{Ix}, we are 
then led to expect the approximation
\[
E(h,n) \sim 2\lambda \rho^{-n} n^{-3/2} J(x),
\]
which is justified next.

\smallskip
\emph{Error management.} 
In order to justify the replacement of~$e_h$ by~$\wh e_h$, following~\eqref{ap1}
and~\eqref{Ehn}, we  observe  the estimate 
\begin{equation}\label{apfin}
\left|\int_{\tilde \gamma_1\cup\tilde \gamma_2} 
|y|^h \frac{ |1-y|}{|1-y^h|}\,  \frac{|dz|}{|z|^{n+1}}\right|
=O\left (\rho^{-n} \frac{\log^4n}{n^{3/2}}\right).
\end{equation}
This results from the discussion of the lower bound on $(1-y^h)/(1-y)$ that 
follows~\eqref{wheh},  the inequality $|y^h|\le1$, 
and the fact that the length of the integration interval is $O(\log^2n/n)$.
%
The error in our approximation has three sources: the two successive replacements
\begin{equation}\label{aneq}
e_h \mapsto \wh e_h,
\qquad \frac{1-y^h}{1-y} \mapsto \frac{1-\exp(-\lambda x\sqrt{t})}{\lambda\sqrt{t/n}}
\end{equation}
and the integration on a finite contour. 
We have, for $z\in\tilde\gamma_1\cup\tilde\gamma_2$:
\[
e_h = \hat e_h \pran{1+O\pran{\frac{\log^2 n}{\sqrt n}}} =  2\lambda \sqrt{\frac t n }\cdot \frac{\exp(-x \sqrt t)}{1-\exp(- x \sqrt t)} \cdot \pran{1+O\pran{\frac{\log^\star n} {\sqrt n}}}.
\]
Finally, the infinite extension of the contour only entails an additive error term of 
the form $O(\exp(- K \log^4 n))$, since  
\[\int_{\log^2 n}^\infty e^{-w^2} dw =O(e^{-\log^4 n}).\]
This implies, 
for $h=\lambda^{-1} x\sqrt{n}$:
\[
e_{h,n}\equiv [z^n] e_h(z) =2\lambda \rho^{-n}n^{-3/2} J( x)
+O\left(\rho^{-n}\frac{\log^\star n}{n^2}\right).
\]
The explicit form of~$J(X)$ in~\eqref{Jx} and
the asymptotic form of~$y_n$ (Lemma~\ref{lem:rho1})
jointly yield
the statement.
\end{proof}

\begin{figure}\small
\begin{center}
\includegraphics[width=6.0truecm]{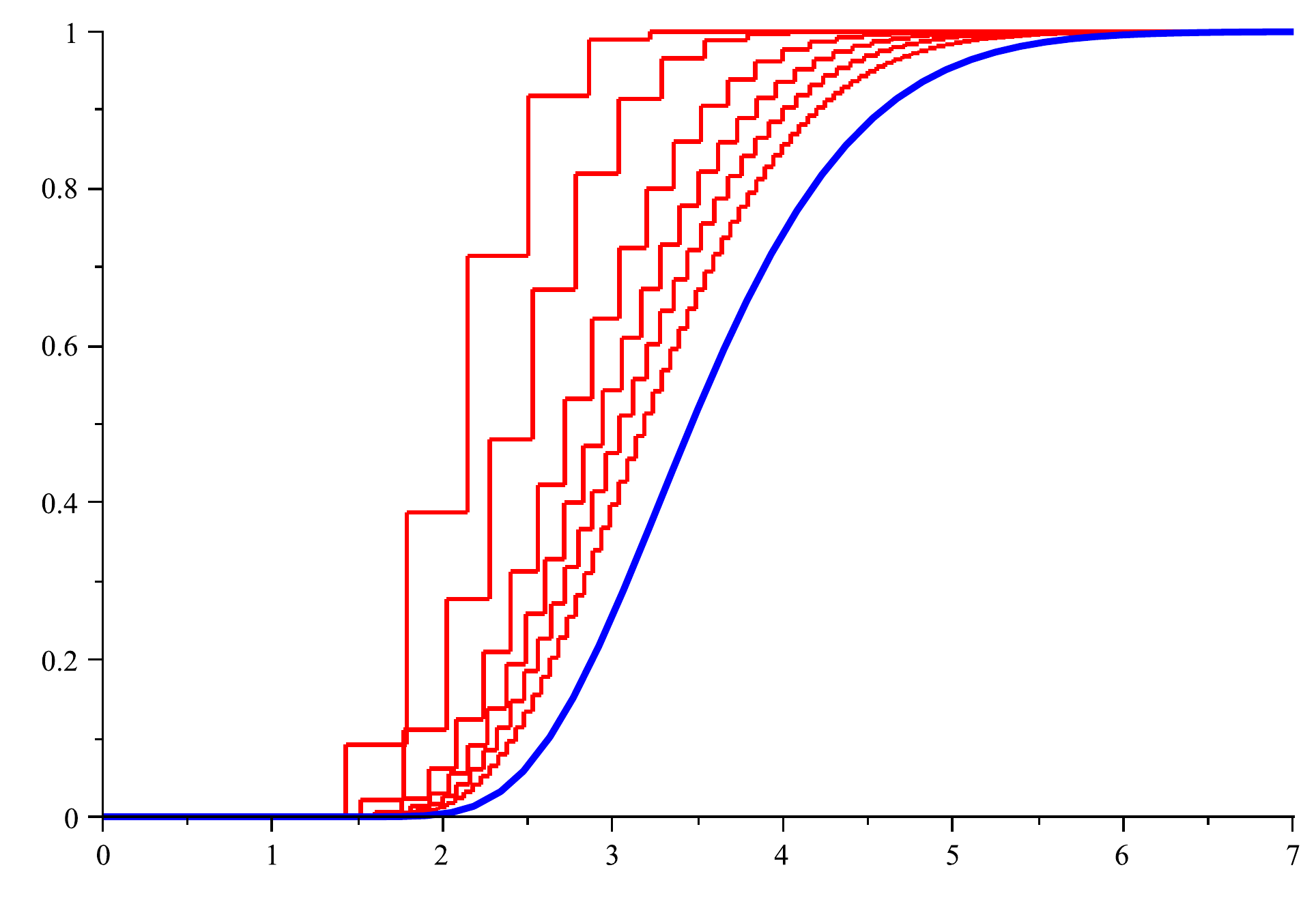}
\end{center}

\vspace*{-0.4truecm}

\caption{\label{height-cdf:fig}\small
The normalized distribution functions $\Pr(H_n\le \lambda^{-1}x\sqrt{n})$,
for $n=10,20,50,100,200,500$, as a function of~$x$, and the limit 
 distribution function $1-\Theta(x)$, where $\Theta(x)$ is specified in Theorem~\ref{clt}.}
\end{figure}

The main message of     the proof of Theorem~\ref{clt}  is    twofold:
$(i)$~for  any    ``reasonable''   expression  involving~$e_h$,    \emph{the
estimation of the Cauchy coefficient formula can be limited to a small
neighbourhood of~$\rho$} (parts   $\tilde\gamma_1$ and~$\tilde\gamma_2$), 
since the other parts of the contour~$\gamma$ have exponentially negligible contributions;
$(ii)$~\emph{the approximation provided by Proposition~\ref{prop:eh_sim} and Equation~\eqref{prop3} is 
normally sufficient to derive first-order asymptotic estimates.}

The convergence in law expressed by Theorem~\ref{clt} is illustrated by Figure~\ref{height-cdf:fig}.
The proof of the theorem points to an error term,
in the convergence to the limit, that is of the form $O((\log^a n)/\sqrt{n})$,
with an unspecified exponent~$a$. As a matter of fact, the value $a=1$ is suggested by the logarithmic 
character of the error term in~\eqref{prop3} of Proposition~\ref{prop:eh_sim}. Convergence is,
at any rate, somewhat slow, a fact that is perceptible from Figure~\ref{height-cdf:fig}.

On an other register, the distribution function $1-\Theta(x)$
belongs to the category of elliptic theta functions~\cite[Ch.~XXI]{WhWa27}, which are of the rough form\footnote{
Do $q=e^{-x^2/4}$ and $z=0$ to recover~$\theta(x)$.} $\sum q^{k^2}e^{2ikz}$
and are well-known to satisfy transformation formulae~\cite[p.~475]{WhWa27}. 
Regarding~$\Theta(x)$, such formulae provide an alternative form,
which we state for the \emph{density function}, $\vartheta(x):=-\Theta'(x)$:
\begin{equation}\label{alttheta}
\vartheta(x)=\frac{8\sqrt{\pi^3}}{x^3}\vartheta\left(\frac{4\pi}{x}\right).
\end{equation}

Theorem~\ref{llt} states that the $H_n$ indeed satisfies a local limit law with density function $\vartheta(\,\cdot\,)$: for $x$ in a compact set of $\R_{>0}$ 
and $h=\lambda^{-1}x\sqrt{n}$
an integer, there holds uniformly
\[
\Pr(H_n=h)\sim\frac{\lambda}{ \sqrt{n}}\vartheta(x),\]
where\qquad $\ds \vartheta(x)=-\Theta'(x)=
(2x)^{-1}\sum_{k\ge1} 
(k^4 x^4-6k^2 x^2)e^{-k^2 x^2/4}$.

\begin{proof}[Proof of Theorem~\ref{llt}]
We abbreviate the discussion, since it is technically very similar to the proof of
 Theorem~\ref{clt}: only the approximations near $z=\rho$ differ.
Proceeding in this way, based on Proposition~\ref{prop:eh_sim},
we can justify approximating the number of trees
of height exactly~$h$ and size~$n$ by the integral
\[
\frac{1}{2i\pi}\int_{\tilde\gamma_1\cup\tilde\gamma_2} \left(\wh e_{h-1}-\wh e_{h}\right)\, \frac{dz}{z^{n+1}},
\]
with $\wh e_h$ as defined in~\eqref{wheh}. We have
\begin{equation}\label{eq:gh_approx}
\wh e_{h-1} - \wh e_{h}= 2y^{h-1} \frac{(1-y)^2}{(1-y^h)(1-y^{h-1})}.
\end{equation}
The approximations~\eqref{ap12} and~\eqref{ap2} then motivate considering the
quantity
\[
J_1(X):=\cau\int_\cal L \frac{\exp(-X\sqrt{t})}{(1-\exp(-X\sqrt{t}))^2}
t e^t\, dt\,.
\]
One  then finds (with the auxiliary estimate~$|R_h-R_{h+1}|=O((\log^\star n)/\sqrt{n})$ 
provided by Proposition~\ref{prop:eh_sim}):
\[
y_{n,h}-y_{n,h+1}=2\lambda^2 \rho^{-n} n^{-2} 
J_1( x)+O\left(\rho^{-n}\frac{\log^\star n}{n^{5/2}}\right).
\]
On the other hand, differentiation under the integral sign yields $J_1(X)=-J'(X)$,
which proves the statement.
\end{proof}

\begin{figure}\small
\begin{center}
\includegraphics[width=6.5truecm]{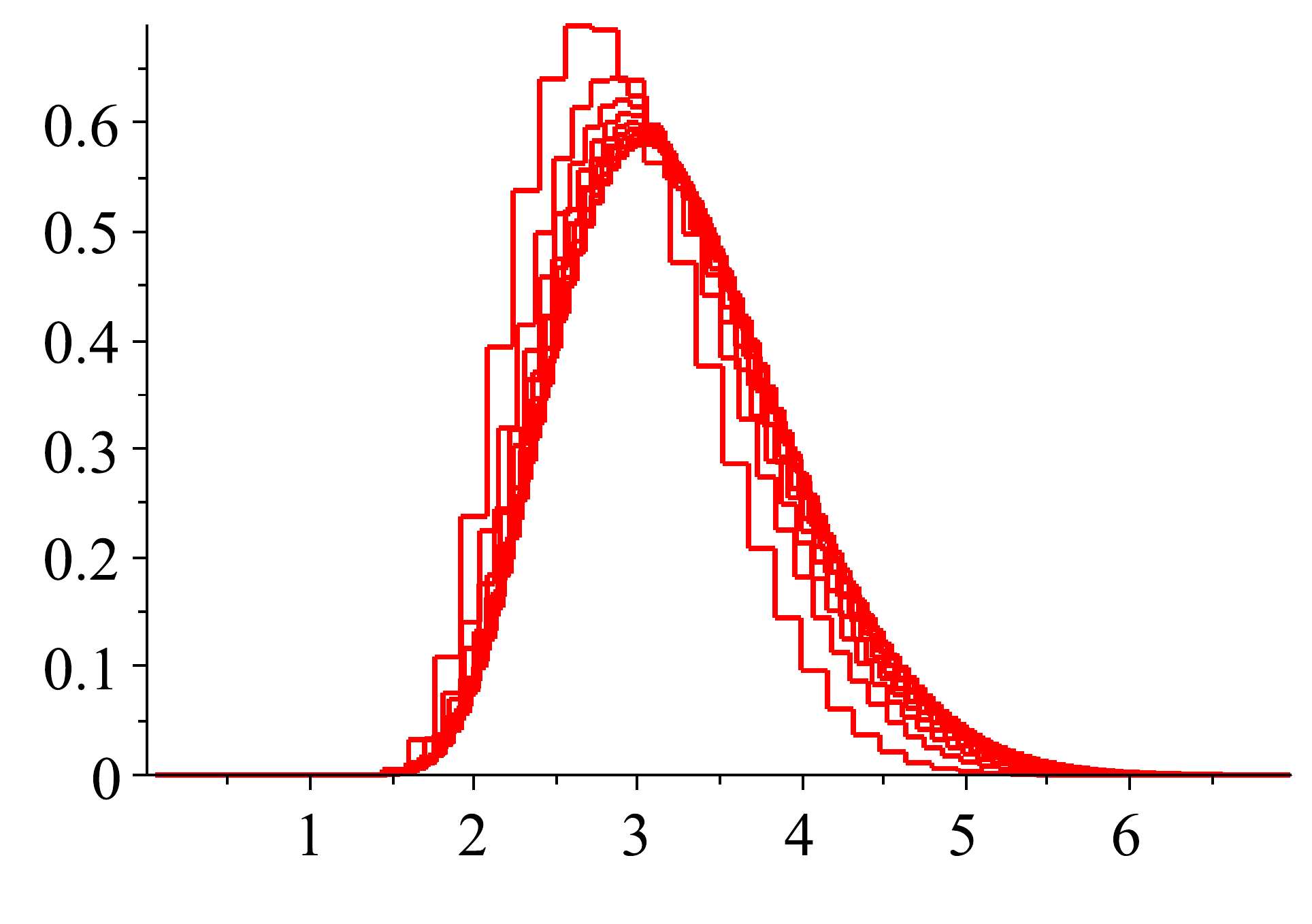}
\includegraphics[width=5.8truecm]{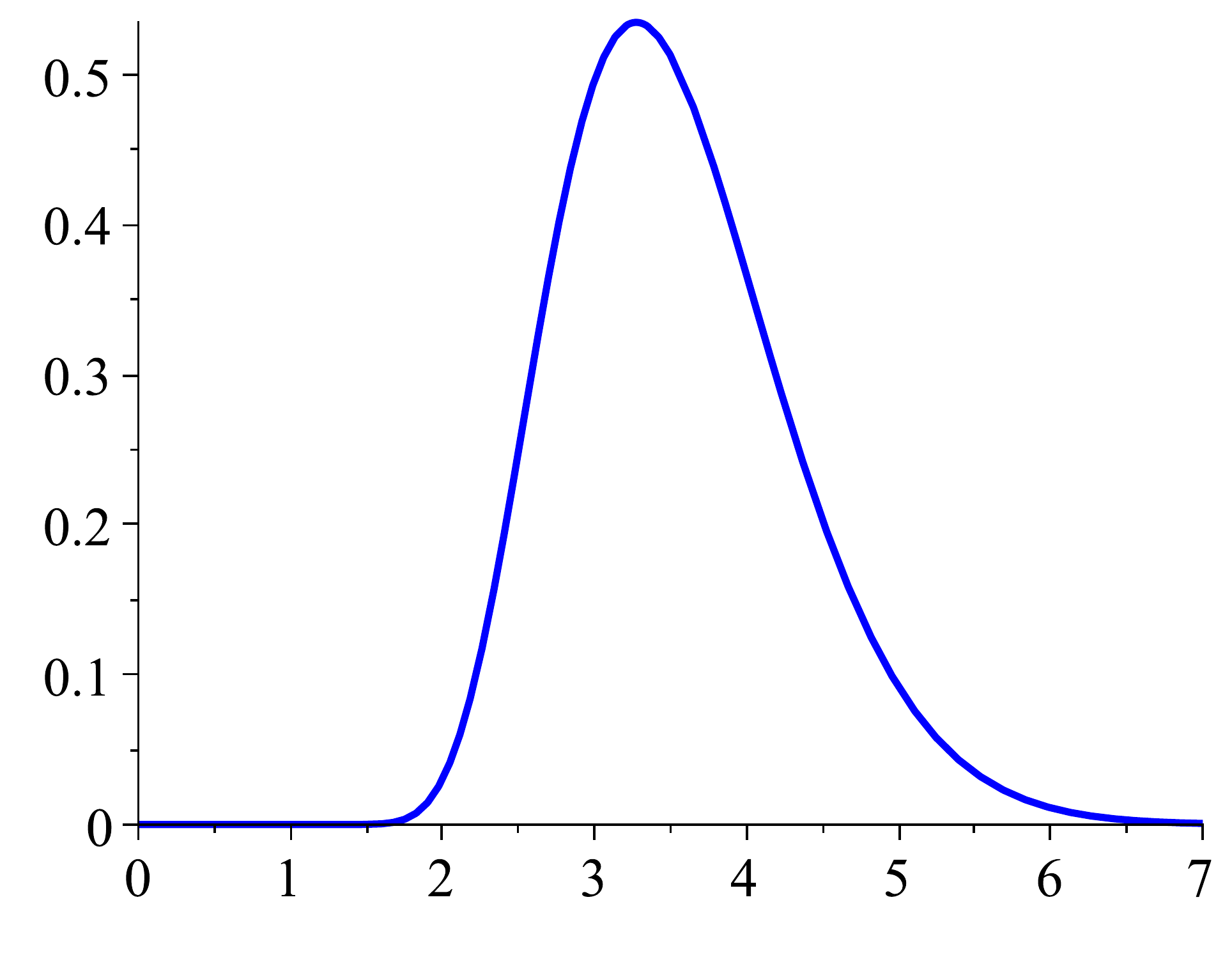}
\end{center}
\caption{\label{dens-fig}\small
\emph{Left}: the normalized histograms of the distribution of height $\Pr(H_n=h)$
(as a function of~$x$, with $h=\lfloor\lambda^{-1}x\sqrt{n}\rfloor$),
for $n=100,200,\ldots,500$.
\emph{Right}: the limit density $\theta(x)=-\Theta'(x)$.}
\end{figure}

Figure~\ref{dens-fig} displays the normalized histograms of the distribution of height
and a plot of the corresponding limit density.

Revisiting the proof of Theorems~\ref{clt} and~\ref{llt}
shows that one can allow $x$ to become either small or large,
albeit to a limited extent. Indeed, it can be checked,
for instance, that allowing $x$ to get as large as $O(\sqrt{\log n})$
only introduces extra powers of~$\log n$ in error estimates. 
However, such extensions are limited by the fact that the main theta term eventually becomes
smaller than the error term. We state (compare with~\cite[Th.~1.1]{FlGaOdRi93}):

\begin{theorem}[Moderate deviations]\label{thm:moderate}
There exist constants $A,B,C>0$ such that for $h=(x/\lambda)\sqrt{n}$ with
$A/\sqrt{\log n}\le x\le A\sqrt{\log n}$ and $n$ large enough, there holds
\begin{equation}\label{mld}
\left|\Pr(H_n\ge \lambda^{-1} x\sqrt{n})-\Theta(x)
\right|
\le \frac{C}{n^B}.
\end{equation}
In particular, if~$x\to\infty$ in such a way that  $x\le A\sqrt{\log n}$,
then, uniformly,
\[
\Pr(H_n\ge \lambda^{-1} x \sqrt{n})
\sim  x^2 e^{-x^2/4}.
\]
\end{theorem}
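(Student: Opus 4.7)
The plan is to rerun the proof of Theorem~\ref{clt} and track the dependence on~$x$ in every error term, uniformly as $x$ varies over $[A/\sqrt{\log n},\,A\sqrt{\log n}]$. I first observe that the Cauchy integration contour $\gamma=\tilde\gamma_1\cup\tilde\gamma_2\cup\gamma_3\cup\gamma_4\cup\gamma_5$ and the bounds on its outer pieces remain in force without change: the estimates~\eqref{outer},~\eqref{eq:join} and~\eqref{eq:outer_rect} contribute only $O(\rho^{-n}\exp(-c\log^2 n))$, a bound which is independent of~$h$ and hence uniform in~$x$. After division by $y_n$, those parts generate an error that is super-polynomially small, hence absorbed into any $O(n^{-B})$ target.

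The heart of the argument is the integral on $\tilde\gamma_1\cup\tilde\gamma_2$. Here I would carry through the chain of approximations~\eqref{wheh}--\eqref{ap2} and verify its uniformity in~$x$: one has $|y^h|\le 1$ on the contour (since $|y|\le 1$ there), and $|x\sqrt t|=O((\log n)^{3/2})$ for $|t|\le\log^2 n$, which keeps the expansion $y^h=e^{-x\sqrt t}(1+O((\log^\star n)/\sqrt n))$ meaningful. The lower bound $x\ge A/\sqrt{\log n}$ ensures $h\ge 1$, so Proposition~\ref{prop:eh_sim} applies with its uniform error bound, and the standard lower estimate $|(1-y^h)/(1-y)|\ge K\sqrt n/\log n$ underlying~\eqref{ap1} remains valid. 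The conclusion is
\begin{equation*}
\Pr(H_n\ge\lambda^{-1}x\sqrt n)=\Theta(x)+O\pran{\frac{\log^\star n}{\sqrt n}},
\end{equation*}
uniformly in~$x$ over the stated range. Choosing any $B<1/2$ then absorbs the logarithmic factor and yields~\eqref{mld}.

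For the second assertion, only the $k=1$ term of $\Theta(x)=\sum_{k\ge 1}(k^2x^2-2)e^{-k^2x^2/4}$ contributes to leading order as $x\to\infty$, since the $k\ge 2$ tail is $O(x^2 e^{-x^2})$; thus $\Theta(x)\sim x^2 e^{-x^2/4}$. For this to survive the additive error $O(n^{-B})$ one needs $x^2 e^{-x^2/4}\gg n^{-B}$; but for $x\le A\sqrt{\log n}$ the left-hand side is at least $A^2(\log n)\cdot n^{-A^2/4}$, so it suffices to choose $A>0$ small enough that $A^2/4<B$, which is compatible with $B<1/2$ whenever $A<\sqrt 2$.

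The hard part, and the reason the admissible window has the stated shape, will be confirming uniformity in~$x$ of every quantity entering the proof of Theorem~\ref{clt}: the bound on $|R_h|$ from Proposition~\ref{prop:eh_sim}, the lower bound on $|(1-y^h)/(1-y)|$, and the relative errors on $y^h$ and $(1-y^h)/(1-y)$ must hold with constants independent of $x\in[A/\sqrt{\log n},\,A\sqrt{\log n}]$. The benign feature is that $|y^h|\le 1$ along the contour, so the growth of~$h$ introduces no exponentially large factor; the binding constraint is the trade-off between taking $A$ small (so that $x^2 e^{-x^2/4}$ outweighs $n^{-B}$ for all $x$ in the window) and taking $A$ large (to enlarge the usable portion of the tail), which forces the numerical condition $A^2/4<B$ in the argument.
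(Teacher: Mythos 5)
Your proposal is correct and matches the paper's approach, which is to rerun the Cauchy-integral analysis underlying Theorem~\ref{clt} while tracking the $x$-dependence of the error terms. The paper itself offers no detailed proof of Theorem~\ref{thm:moderate} beyond a two-sentence remark that widening $x$ to $O(\sqrt{\log n})$ only introduces extra $\log$-factors and that the window is bounded by the point where the theta term drops below the error term; your write-up is a more explicit version of exactly that sketch, and in particular your observation that the constant $A$ must satisfy $A^2/4<B$ (to keep $x^2e^{-x^2/4}$ above the additive error $n^{-B}$) correctly articulates the trade-off the paper only alludes to.
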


Similar estimates hold for the local law.
These estimates can furthermore be supplemented by (very) large deviation estimates in the style
of~\cite[Th.1.4]{FlGaOdRi93}: 

\begin{theorem}[Very large deviations]\label{thm:large_deviations}
There exists a continuous increasing function~$I(u)$ satisfying $I(u)>0$ for $0<u\le 1$ and
such that, given any fixed~$\delta>0$, 
one has for all~$x\in[\delta,1-\delta]$ and all~$n$
\[
\Pr(H_n\ge x n)\le K n^{3/2} e^{-n I(x)},
\]
where~$K$ only depends on~$\delta$.
\end{theorem}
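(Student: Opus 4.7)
The plan is to bound $\Pr(H_n\ge xn)=e_{h,n}/y_n$ via Cauchy's coefficient formula applied on a \emph{circle} $|z|=r$ strictly inside the disc of convergence, exploiting the geometric decay of $e_h(z)$ on that circle furnished by the convergence criterion (Lemma~\ref{lem:criterion}), and then optimising over~$r$. This follows the standard saddle-point strategy, parallel to \cite{FlGaOdRi93} for ordered binary trees.

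\emph{Step 1: the rate function.} I would define
\[
I(x):=-\log \inf_{r\in(0,\rho]} F_x(r),\qquad F_x(r):=\frac{\rho\,y(r)^x}{r}.
\]
One checks $F_x(\rho)=1$, while as $r\to 0^+$, $y(r)\sim r$ gives $F_x(r)\sim \rho r^{x-1}\to\infty$ for $x\in(0,1)$. By the singular expansion~\eqref{singy}, $y'(\rho^-)=+\infty$, whence $F_x'(\rho^-)=+\infty$ and $F_x(r)<1$ for $r$ slightly below $\rho$. Thus the infimum is attained at an interior point $r^{\star}(x)\in(0,\rho)$, is strictly less than $1$, and $I(x)>0$ for $x\in(0,1]$. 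Monotonicity comes from the identity $F_{x_2}(r)=F_{x_1}(r)\,y(r)^{x_2-x_1}<F_{x_1}(r)$ for $x_2>x_1$ and $r<\rho$; continuity follows from the joint continuity of $(x,r)\mapsto F_x(r)$ and a standard compactness argument.

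\emph{Step 2: uniform geometric bound on $e_h$ along the circle.} Fix $r\in(0,\rho)$ and $\epsilon>0$. I would establish
\[
\sup_{|z|=r}|e_h(z)|\le C(r,\epsilon)\,h\,(y(r)+\epsilon)^h,\qquad h\ge1.
\]
Indeed, for $|z|=r$, the positivity of the Taylor coefficients of $y$ gives $|y(z)|\le y(|z|)=y(r)<1$, and $|z|^2/\rho=r^2/\rho\le y(r)$ since $y(r)\ge r$ on $[0,\rho]$. Lemma~\ref{lem:criterion} thus applies with $\beta$ arbitrarily close to $y(r)$, yielding a bound of the form $Ch\beta_0^h$ with $\beta_0\le y(r)+\epsilon$. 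To make the constant $C$ and threshold $m$ \emph{uniform in $z$} on the circle, one argues by compactness: at each $z_0$ with $|z_0|=r$, the conditions~\eqref{eq:criterion} hold for some $m(z_0)$, and by continuity they persist on a small open neighbourhood of $z_0$; a finite subcover provides a single $m$ (and a single $C$) valid on the whole circle.

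\emph{Step 3: Cauchy bound and optimisation.} Cauchy's formula on $|z|=r$ combined with $y_n\ge c_0 n^{-3/2}\rho^{-n}$ (Lemma~\ref{lem:rho1}) yields, with $h=\lceil xn\rceil$:
\[
\Pr(H_n\ge xn)=\frac{e_{h,n}}{y_n}\le \frac{C(r,\epsilon)\,h}{c_0}\,n^{3/2}\left(\frac{\rho(y(r)+\epsilon)^x}{r}\right)^n.
\]
For $x\in[\delta,1-\delta]$, the rate $I$ is bounded below by some $I_0:=\min_{[\delta,1-\delta]}I>0$. Choosing $r=r^\star(x)$ and $\epsilon$ small gives $\rho(y(r)+\epsilon)^x/r\le \exp(-I(x)+\eta)$ for any prescribed $\eta>0$. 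The polynomial factor $hn^{3/2}=O(n^{5/2})$ is absorbed into $e^{n\eta}$ for $n\ge n_0(\eta)$, amounting to a harmless redefinition of $I$ as $I-2\eta$ (still positive if $\eta<I_0/2$). Finitely many small $n$ are handled by enlarging $K$.

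\emph{Main obstacle.} The critical point is obtaining, in Step~2, the sharp geometric rate $y(r)$ rather than $r/\rho$. The naive bound $|e_h(z)|\le h^{-1/2}(r/\rho)^h$ from Lemma~\ref{lem:bound1}, plugged into Cauchy, produces a factor $(r/\rho)^{h-n}=(\rho/r)^{n-h}$ which \emph{grows} exponentially in $n-h$. The sharper rate $y(r)$ is essential because, near $\rho$, $y(r)<r/\rho$: from~\eqref{singy}, $1-y(r)\sim\lambda\sqrt{1-r/\rho}$, which exceeds $1-r/\rho$ for $r$ near $\rho$. This sharper rate is accessible only via the convergence criterion of Lemma~\ref{lem:criterion}, whose uniform application on the full circle depends on the compactness argument just outlined.
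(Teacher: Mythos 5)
Your overall strategy---a saddle-point/exponential bound on $e_{h,n}/y_n$, optimised over a parameter $r\in(0,\rho)$, with the rate function $I(x)=-\log\inf_{r}\rho\,y(r)^x/r$---is exactly the one the paper uses, and your rate function coincides with theirs. However, Step~2 contains a genuine gap that propagates into Step~3. By routing the geometric bound through Lemma~\ref{lem:criterion} on a full circle $|z|=r$, you can only achieve the rate $\beta_0=y(r)+\epsilon$ with $\epsilon>0$ fixed (the criterion requires $\beta>|y|+\alpha/2>y(r)$ strictly, and at the real point $z=r$ the modulus $|y(z)|$ equals $y(r)$). Plugging this into Cauchy gives an extra factor $\bigl((y(r)+\epsilon)/y(r)\bigr)^{xn}=e^{c\epsilon n}$ for fixed $\epsilon$, which cannot be absorbed into a bounded constant $K$; your fix---replace $I$ by $I-2\eta$ with $\eta<\tfrac12\min_{[\delta,1-\delta]}I$---makes the rate function depend on $\delta$, whereas the theorem asserts a \emph{single} $I$, with only $K$ allowed to depend on $\delta$. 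Your ``Main obstacle'' paragraph is also off the mark in claiming the rate $y(r)$ is accessible ``only via Lemma~\ref{lem:criterion}.''

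The paper avoids the circle and the criterion entirely. Since $e_h$ has nonnegative Taylor coefficients, for real $r\in(0,\rho)$ one has directly $e_{h,n}\le e_h(r)/r^n$ (no Cauchy contour or supremum over $|z|=r$). Then, because all terms in recurrence~\eqref{eq:rec_eh} are nonnegative at real $r$, one may drop the quadratic correction and bound the P\'olya term with Lemma~\ref{lem:bound1} to obtain $e_{h+1}(r)\le y(r)e_h(r)+(r^2/\rho)^h$; unfolding gives $e_h(r)\le K\,y(r)^h$ with $K$ uniform over $h$ and $r\in(0,\rho)$, since $y(r)>r+r^2+r^3\ge r^2/\rho$ keeps the geometric series $\sum_i (r^2/(\rho\,y(r)))^i$ bounded. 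This yields the clean bound $\Pr(H_n\ge xn)\le K'n^{3/2}(y(r)^x\rho/r)^n$ with no $\epsilon$-slack and no extra factor of $h$, after which the optimisation over $r$ (justified by the paper via a convexity argument showing $J(r)=y(r)^x/r$ is unimodal) produces the theorem as stated. If you replace your Step~2 and the Cauchy-on-a-circle idea by this positivity-of-coefficients bound plus the recurrence unfolding, the rest of your argument goes through and matches the paper.

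Two lesser remarks. You show the infimum defining $r^\star(x)$ is attained in $(0,\rho)$, but not that it is unique; the paper proves unimodality of $J(r)=y(r)^x/r$ via the monotonicity of the logarithmic derivative $J'/J=x\bigl(ry'(r)/y(r)-1/x\bigr)$, which rests on a positivity/variance observation. And your monotonicity argument for $I$ from $F_{x_2}(r)=F_{x_1}(r)\,y(r)^{x_2-x_1}$ is correct and slightly slicker than what the paper says, though the paper establishes strictly more (the full shape of $J$).
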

\begin{proof}
We propose
to use saddle point bounds~\cite[p.~246]{FlSe09}: for \emph{any} $r\in (0,\rho)$,
one has
\begin{equation}\label{eq:saddle_first}
\p{H_n\ge h} = \frac{e_{h,n}}{y_n} 
\le \frac{1}{y_n}\left(\frac{e_h(r)}{r^n}\right).
\end{equation}

The first  step  is to obtain an   upper bound on  $e_h(r)$,  for $r\in
(0,\rho)$.  For   such  $r$,  all  terms  in  the  recurrence relation
(\ref{eq:rec_eh}) are non-negative and expanding the relation with the
help of Lemma~\ref{lem:bound1} yields, for  all $h\ge 0$,
the inequality $$ e_{h+1}(r)
\le     y(r)    e_h(r)    +    \pran{\frac{r^2}\rho}^h     \le  y(r)^h
\pran{\sum_{i=1}^h  \pran{\frac{r^2}{\rho  y(r)}}^i +    e_1(r)}.   $$
However, it is  easily verified that for  all $r\in (0,\rho)$, we have
$y(r)> r+r^2+r^3 \ge r^2/\rho$.   As a consequence, the   series above
converges  and   there  exists a universal     constant  $K$ such that
\[
e_{h}(r) \le K y(r)^h,\quad \hbox{for $h\ge 0$ and $r\in (0,\rho)$}.
\]
The last estimate, the saddle point bound (\ref{eq:saddle_first}),
 and Lemma~\ref{lem:rho1}
yield,  in the region $h=xn$,
$$\p{H_n\ge h} \le K' n^{3/2} \pran{y(r)^x \frac \rho r}^n,$$
 for some other universal constant $K'$ and for any $r\in (0,\rho)$.

The goal is now to make an \emph{optimal choice} of 
the value of~$r$. For $x$ kept fixed and regarded as a parameter, we consider
$$J(r,x):=\frac{y(r)^x} r$$
as a function of $r$, and henceforth abbreviated as $J(r)$. We have $J(0)=+\infty$ and $J(\rho)=\rho^{-1}$. The point, to be justified shortly, is that $J(r)$ decreases from $+\infty$ to some minimal value $J(\xi)$, when $r=\xi$; then it increases again to $\rho^{-1}$ for $r\in (\xi, \rho)$. In particular, we must have $J(\xi)<\rho^{-1}$, which suffices to imply a non-trivial exponential bound on the probabilities.

The unimodality of $J(r)\equiv J(r,x)$ results from the usual convexity properties of generating functions (see~\cite{DeZe93} or~\cite[pp.~250 and 580]{FlSe09}). Indeed it suffices to observe that the logarithmic derivative (all derivatives being taken with respect to $r$), namely,
$$\frac{J'(r)}{J(r)}=x\pran{\frac{r y'(r)}{y(r)}-\frac 1 x},$$
varies monotonically from $x-1\le 0$ to $+\infty$, as $r$ varies from $0$ to $\rho$. This last fact is a consequence of the positivity of 
$$v:=\frac{\partial}{\partial r} \pran{\frac{ry'(r)}{y(r)}-\frac 1 x},$$
itself granted, since $V=rv$ is the variance of a random variable $X$ with probability generating function 
$\E{u^X}={J(ru)}/{J(r)}$.

In summary, from the preceding considerations, the system
$$I(x) = x\log y(\xi) - \log \xi + \log \rho\qquad\text{with}\qquad \xi=\xi(x) \text{~such that~} x\xi y'(\xi)-y(\xi)=0
$$
uniquely determines a function $I(x)$, which precisely satisfies the properties asserted in Theorem~\ref{thm:large_deviations}.
\end{proof}
Finally, the approximation of~$e_h$ by~$\wh e_h$ in~\eqref{wheh} is
good enough to grant us access to moments (cf also~\cite{FlOd82}) stated in Theorem~\ref{thm:moments}: as $n\to\infty$, we have
\begin{equation*}
\E{H_n}\sim \frac 2 \lambda \sqrt{\pi n}
\qquad\mbox{and}\qquad
\mathbb{E}[H_n^r]\sim r(r-1)\zeta(r) \Gamma(r/2) \left(\frac{2}{\lambda}\right)^r n^{r/2},\quad r\ge 2
.\end{equation*}

\begin{proof}[Proof of Theorem~\ref{thm:moments}]The problem reduces to estimating generating functions
of the form 
\[
M_r(z)=2 (1-y)^2 \sum_{h\ge1}h^r \frac{y^h}{(1-y^h)^2},
\]
which are accessible to the Mellin transform technology~\cite{FlGoDu95}, upon
setting~$y=e^{-t}$. If we let $F_r(t)=\sum_{h\ge 1}h^r \frac{e^{-ht}}{1-e^{-ht}}$, then the Mellin transform $F^\star_r(s)$ is given by 
$$F^\star_r(s)=\zeta(s-r) \zeta(s-1) \Gamma(s),$$
and is valid in the fundamental strip $s>r+1$. The information relative to the distribution is concentrated around the singularity, hence for values of $y$ such that $y\to1$, or equivalently $t\to 0$. The asymptotics of $F_r(t)$ as $t\to 0$ correspond to the singular expansion of its Mellin transform $F_r^\star(s)$ to the left of the strip.

For $r\ge 2$, the main contribution is due to the simple pole at $s=r+1$, which has residue $\zeta(r)\Gamma(r+1)$. It follows that 
$$F_r(t) \sim \zeta(r)\Gamma(r+1) t^{-r-1}\qquad r\ge 2.$$
Since $1-y\sim \lambda \sqrt{1-z/\rho}$, and $y=e^{-t}$, we have $t\sim \lambda \sqrt{1-z/\rho}$ and 
$$M_r(z)\sim 2 \zeta(r) \Gamma(r+1) \lambda^{-r+1} (1-z/\rho)^{-(r-1)/2}.$$
Singularity analysis theorems imply
$$[z^n]M_r(z)\sim 2 \zeta(r) \lambda^{-r+1} \Gamma(r+1)\rho^{-n} \frac{n^{-(r+1)/2}}{\Gamma((r-1)/2)}.$$
The duplication formula for the Gamma function, 
combined with  the estimate for $y_n$,
then yields:
$$\E{H_n^r} \sim \frac{[z^n] M_r(z)}{y_n} \sim \pran{\frac 2 \lambda}^{r} \zeta(r) r(r-1) \Gamma(r/2) n^{r/2}\qquad r\ge 2.$$
When $r=1$, the Mellin transform $F_r^\star(s)$ has a double pole at $s=2$ and the 
asymptotic form of $F_r(t)$ at zero involves logarithmic terms. We then obtain, as $n\to\infty$,
$$\E{H_n} \sim 2 \lambda^{-1} \sqrt{\pi n}$$
using similar arguments
\end{proof}



\section{\bf The diameter of unrooted trees}\label{sec:diam}

In  this  section,   we put to use     the   approximations of
Section~\ref{sec:estimates_eh} in order to quantify  extreme distances in random unrooted
trees. Developments parallel those of Riordan~\cite{Riordan60},
as regards formal generating functions, and 
especially Szekeres~\cite{Szekeres82}, as regards asymptotic developments.

In the class
$\mathcal Y$  of rooted binary  trees, every node  has total degree three or
one,  except for the  root, which  has   degree two.  Consider  now the  class
$\mathcal  U$ of  \emph{unrooted} ternary trees  where  each node  has
degree   either three or one,  without   exception (no special root node is now distinguished). 
Let $\mathcal U_n$
be comprised of the elements of $\mathcal U$ with $n$ nodes of degree one, the
leaves, which determine \emph{size}, hence $(n-2)$ nodes of degree three.
Denote by $u_n$  the  number of  such trees. 
The trees of $\mathcal  U$ of size at most~8 are
displayed in Figure~\ref{fig:utrees}.
We  write the generating  function of~$\cal U$ as
$u(z):=\sum_{n\ge 0} u_n  z^n$,
so that
$$u(z)=z^2+z^3+z^4+z^5+2z^6+2z^7+4z^8+6z^9+11z^{10}+18z^{11}+\cdots,$$ 
and the  coefficients
constitute sequence A000672 of Sloane's
\emph{On-line Encyclopedia of Integer Sequences}~\cite{Sloane08}.

\begin{figure}
	\includegraphics[width=.9\linewidth]{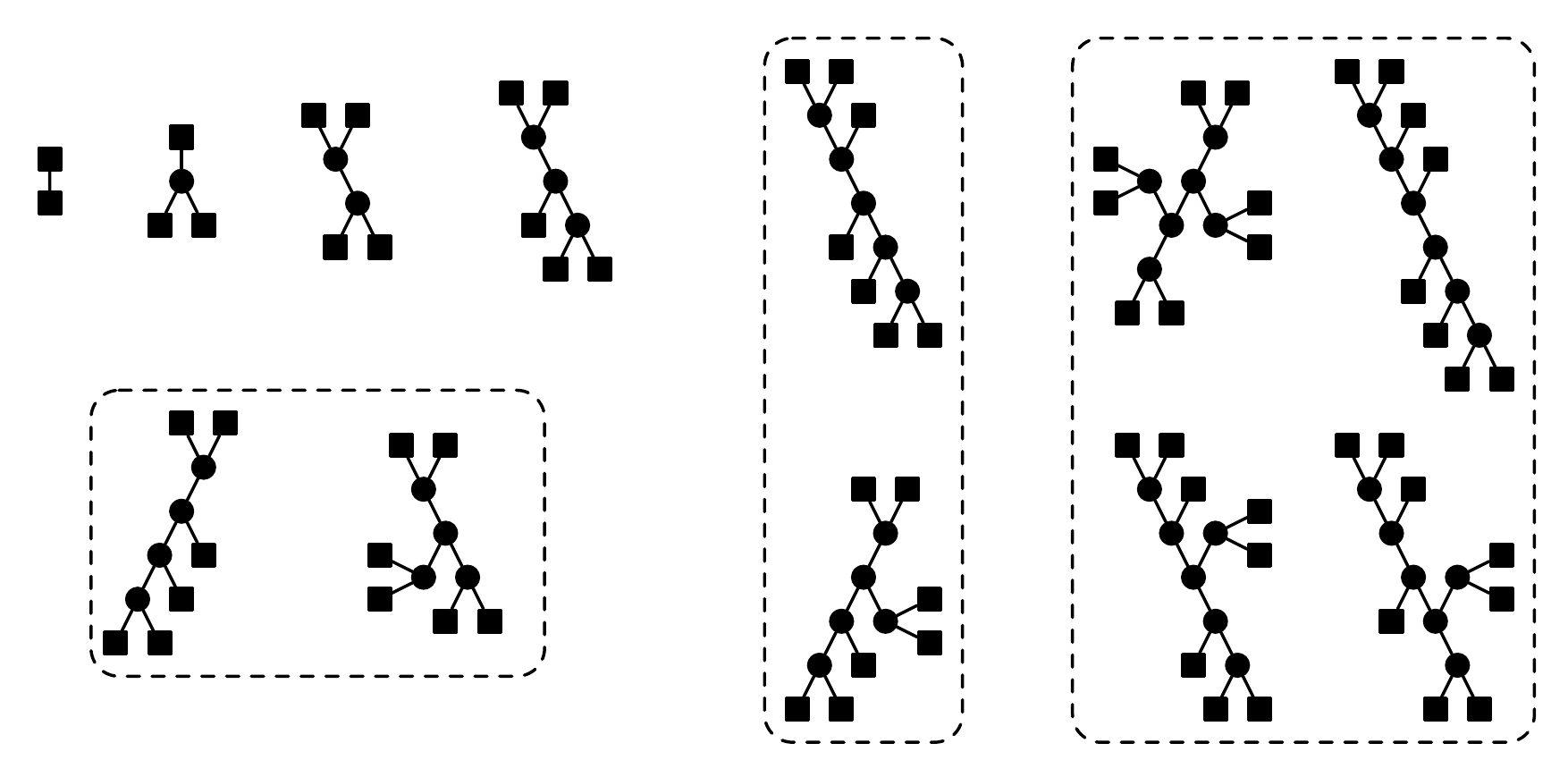}
	\caption{\label{fig:utrees}The unlabelled trees of sizes from~2 to~8, 
with external nodes (leaves) represented by squares.} 
\end{figure}

Using considerations  about the  dissimilarity characteristic of trees found in Otter's work~\cite{Otter48}
and developed in \cite{BeLaLe98,HaPa73}, we obtain
\begin{equation}\label{eq:uz}u(z)=
z^2+u^\bullet(z)-\frac 1 2 y(z)^2+\frac 12 y(z^2),\end{equation}
where $u^\bullet(z)$ is the generating  function of  unrooted trees
with  a distinguished node. (Note  that  because of the special  degree
condition  in rooted  trees  $u^\bullet(z)\ne y(z)$.) The distinguished
node might be a leaf or a node of degree three, which leads to
\begin{equation}\label{eq:uzdot}
u^\bullet(z)=zy(z)+\frac 1 6 y(z)^3 + \frac 1 2 y(z^2)y(z) + \frac 1 3 y(z^3).
\end{equation}
The    equations  (\ref{eq:uz}) and   (\ref{eq:uzdot}) fully characterize~$u(z)$ and,
together with
Lemma~\ref{lem:rho1}, they determine  the singular expansion  of $u(z)$. The
following    classical lemma  reduces to   simple
manipulations  based   on Lemma~\ref{lem:rho1},  supplemented  by  routine
singularity analysis of the generating function.

\begin{lemma}\label{lem:asympt_un}
The generating function $u(z)$ of unrooted ternary trees
expands in a neighbourhood of $\rho$ as
follows $$u(z)=  \mu_0  +  \mu_1 (1-z/\rho)   +  \frac 1  3  \lambda^3
(1-z/\rho)^{3/2}   +   O\pran{(1-z/\rho)^2},$$     for some  constants
$\mu_0,\mu_1\in      \R$       and        $\lambda=\sqrt{2\rho+2\rho^2
y'(\rho^2)}$. Furthermore, the number $u_n$ of unrooted trees of size~$n$
 satisfies the asymptotic estimate
$$u_n=
\frac{\lambda^3}{4\sqrt        \pi}    \cdot     n^{-5/2}    \rho^{-n}
\pran{1+O\pran{\frac 1n}}.$$
\end{lemma}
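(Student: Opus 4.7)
My plan is to substitute the singular expansion of $y(z)$ from Lemma~\ref{lem:rho1} into the defining equations~\eqref{eq:uz} and~\eqref{eq:uzdot}, track cancellations carefully, and then apply singularity analysis to extract the coefficient asymptotics. The first observation is that $y(z^2)$ and $y(z^3)$ are \emph{analytic} in a neighbourhood of $\rho$, since the radius of convergence of $y$ is $\rho$ and $\rho<1/2$, so that $\rho^2,\rho^3 < \rho$. Thus these two terms contribute only to the regular (analytic) part $\mu_0+\mu_1(1-z/\rho)+O((1-z/\rho)^2)$ of the expansion, and the singular behaviour of $u(z)$ is entirely controlled by the three terms $zy(z)$, $y(z)^2$ and $y(z)^3$.

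Writing $w:=\sqrt{1-z/\rho}$ as a local uniformizer, Lemma~\ref{lem:rho1} together with the representation $y(z)=1-\sqrt{\rho-z}\cdot q(z)$ (which is implicit in the proof of that lemma, with $q$ analytic at $\rho$ and $\sqrt{\rho}\,q(\rho)=\lambda$) shows that $y$ has an expansion involving only odd powers of $w$ beyond the constant $1$:
\[
y(z) = 1 - \lambda w + c_3 w^3 + O(w^5),
\]
for some constant $c_3$. From this I would compute mechanically:
\begin{align*}
zy(z)          &= \rho - \rho\lambda w - \rho w^2 + \rho(\lambda+c_3)w^3 + O(w^4),\\
\tfrac12 y(z)^2 &= \tfrac12 - \lambda w + \tfrac12 \lambda^2 w^2 + c_3 w^3 + O(w^4),\\
\tfrac16 y(z)^3 &= \tfrac16 - \tfrac{\lambda}{2} w + \tfrac{\lambda^2}{2}w^2 + \tfrac{3c_3-\lambda^3}{6}w^3 + O(w^4).
\end{align*}
For $\tfrac12 y(z^2)\,y(z)$, I expand $y(z^2)=A_0+A_1 w^2+O(w^4)$ with $A_0=y(\rho^2)$ and $A_1=-2\rho^2 y'(\rho^2)=2\rho-\lambda^2$, the latter identity coming from the definition of $\lambda$ in Lemma~\ref{lem:rho1}; multiplying yields a contribution $\tfrac12 A_0 c_3 - \tfrac12 A_1\lambda$ to the $w^3$ coefficient.

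Collecting all contributions to the coefficient of $w^3$ in $u(z)$, the $\rho\lambda$ term from $zy(z)$ cancels against $-\tfrac12 A_1\lambda = \tfrac12(\lambda^2-2\rho)\lambda$, leaving $\lambda^3/2$; the $-\lambda^3/6$ from $\tfrac16 y^3$ then combines to $\lambda^3/3$; and the remaining $c_3$ terms aggregate with factor $\rho+\tfrac12 y(\rho^2)-\tfrac12$, which \emph{vanishes} by Otter's defining relation $\rho+\tfrac12 y(\rho^2)=\tfrac12$ of Lemma~\ref{lem:rho1}. This is the critical cancellation: it removes the dependence on $c_3$ and produces exactly the stated coefficient $\tfrac13\lambda^3$ in front of $(1-z/\rho)^{3/2}$. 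The constants $\mu_0,\mu_1$ come from the analytic part and can in principle be written down from $y(\rho),y(\rho^2),y(\rho^3)$ and their derivatives, but the statement does not require them explicitly.

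Once the singular expansion is established, the asymptotic form of $u_n$ follows from singularity analysis~\cite[Ch.~VI]{FlSe09}: since $u(z)$ is analytic in a $\Delta$-domain at $\rho$ (inherited from the one for $y(z)$ in the proof of Lemma~\ref{lem:rho1}) and differs from its analytic part by $\tfrac13\lambda^3(1-z/\rho)^{3/2}+O((1-z/\rho)^2)$, the transfer theorem gives
\[
[z^n]\,u(z) \sim \frac{\lambda^3}{3}\cdot\frac{n^{-5/2}}{\Gamma(-3/2)}\,\rho^{-n} = \frac{\lambda^3}{4\sqrt{\pi}}\,n^{-5/2}\rho^{-n},
\]
using $\Gamma(-3/2)=\tfrac{4\sqrt{\pi}}{3}$. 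The $O(n^{-7/2}\rho^{-n})$ remainder produced by the transfer of the $O((1-z/\rho)^2)$ term yields the claimed $1+O(1/n)$ relative error. The only slightly delicate step is the algebraic cancellation just described; everything else is routine bookkeeping plus a direct application of singularity analysis.
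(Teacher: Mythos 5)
Your proof is correct and carries out exactly the ``simple manipulations based on Lemma~\ref{lem:rho1}, supplemented by routine singularity analysis'' that the paper invokes without spelling out. The representation $y(z)=1-\sqrt{\rho-z}\,q(z)$ with $q$ analytic (so that $y-1$ contains only odd powers of $w=\sqrt{1-z/\rho}$), the identity $-2\rho^2 y'(\rho^2)=2\rho-\lambda^2$, the vanishing of the coefficient of $c_3$ by Otter's relation $\rho+\frac12 y(\rho^2)=\frac12$, and the residual coefficient $\frac13\lambda^3$ are all checked and correct.

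One small slip in the last paragraph should be fixed. By the standard transfer theorem, an $O((1-z/\rho)^2)$ remainder valid in a $\Delta$-domain transfers to $O(\rho^{-n}n^{-3})$, i.e.\ a relative error $O(n^{-1/2})$ against the main term $n^{-5/2}\rho^{-n}$, not the $O(\rho^{-n}n^{-7/2})$ you wrote. To obtain the $1+O(1/n)$ of the statement, use the structure you already established: since $y(z)-1$ involves only odd powers of $w$, the local expansion of $u(z)$ has the form $(\text{even powers of }w)+\frac13\lambda^3 w^3+O(w^5)$, and you also showed the $w^1$ coefficient vanishes. The even powers are polynomials in $(1-z/\rho)$ and contribute nothing to $[z^n]u(z)$ for $n$ large, so the genuine half-integer remainder beyond $w^3$ is $O((1-z/\rho)^{5/2})$, which does transfer to $O(\rho^{-n}n^{-7/2})$ and gives the relative error $O(1/n)$. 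With that addendum the argument is complete.
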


We now turn to  the analysis of the  diameter of unrooted trees.  
A \emph{diameter} in a graph or a tree is any simple path of maximal length and we also refer
to the common length of all diameters as the \emph{diameter} of the tree.
Let $u_{d,n}$ be the number of unrooted trees on
$n$     leaves   with  diameter     exactly   equal   to   $d$, and let
$u_{d}(z)=\sum_{n\ge 0} u_{d,n} z^n$  denote the associated generating
function\footnote{
We reserve~$n$ for the size of trees, so that $u_n\equiv [z^n]u(z)$
is the number of trees of size~$n$;
we make use of indices~$d,2h,2h+1$ for diameter and occasionally abbreviate $u_d(z),\ldots$,
as $u_d,\ldots$, so that no ambiguity should occur.}.       To    simplify   notations,    we   set
\[
g_h(z):=e_{h-1}(z)-e_h(z),
\]
which is the  generating  function of  rooted
unlabelled binary trees having height \emph{exactly} $h$.

We have $u_1(z)=z^2$ and $u_2(z)=z^3$. 
Unrooted trees  of size at  least~4 may be recursively decomposed into
sets of rooted trees;  the decomposition depends on  the parity of the
diameter  $d$.   If $d=2h+1$ is odd,  with $d\ge3$, all diameters share a unique edge
(bicentre)  that splits  the tree   into a pair of two  rooted trees  of  height
exactly  $h$ each, so that
\begin{equation}\label{uodd}
 u_{2h+1}(z)  =  \frac 1 2 g_h(z)^2+
\frac 1 2 g_h(z^2).  
\end{equation}
On the other  hand, trees with even  diameter
$d=2h$, with $d\ge4$,  decompose   into three  rooted trees around   a central vertex
(center), with two of the trees of height exactly~$h$ and a third subtree of height at most~$h$:
\begin{align}\label{ueve}
u_{2h}(z)  
&= \frac 1 6 g_{h-1}(z)^3 + \frac 1 2 g_{h-1}(z^2) g_{h-1}(z) + \frac 1 3 g_{h-1}(z^3) \nonumber \\
&\quad + \frac 1 2 g_{h-1}(z)^2 y_{h-2}(z)+ \frac 12 g_{h-1}(z^2) y_{h-2}(z).
\end{align}
In this way, one can enumerate the trees of odd and even diameter (the ``bicentred'' and ``centred'' trees), 
whose generating functions start,
respectively, as
\[
\begin{array}{lll}
u^{\operatorname{odd}}(z)&=&z^2+z^4+z^6++z^7+2z^8+2z^9+6z^{10}+8z^{11}+
\cdots\\
u^{\operatorname{even}}(z)&=&z^3+z^5+z^6+z^7+2z^8+4z^9+5z^{10}+10z^{11}+
\cdots,
\end{array}
\]
with coefficients forming sequences A000673 and A000675 of Sloane's \emph{Encyclopedia}.

We now turn to singular asymptotics in
a $\Delta$--domain\footnote{
	To be precise, we only need to consider the \emph{part} of a $\Delta$-domain that is 
	interior to a $\gamma$-contour of the type introduced in the previous section.}
(see~\cite[\S{VI.3}]{FlSe09} and Equation~\eqref{Deltadef},
and Figure~\ref{fig:tube-issue}). 
As usual, the P\'olya terms in~\eqref{uodd}, \eqref{ueve},
which are the ones containing functional terms involving~$z^2$ or~$z^3$, will turn out to be 
of negligible effect.
Indeed, Lemma~\ref{lem:bound1} guarantees, for $|z|<\sqrt{\rho}$:
\begin{equation}\label{bndpol}
\left|g_h(z^2)\right|\le \left|e_{h-1}(z^2)\right| \le \frac{1}{\sqrt{h-1}}\left(\frac{|z|^2}{\rho}\right)^h.
\end{equation}
Thus, fixing some $R$ with $\rho<R<\sqrt{\rho}$, we have for some $C>0$ and $A\in(0,1)$:
\begin{equation}\label{polyab}
\left|g_h(z^2)\right|< C \cdot A^h,
\end{equation} whenever $z$ lies in a suitable $\Delta$--domain anchored at~$\rho$,
and the same bound on the right of~\eqref{polyab} 
obviously holds for $g_h(z^3)$.
In other words, the P\'olya terms 
involving $z^2$ and $z^3$ are exponentially small. 
This gives us, relative to~\eqref{uodd} and \eqref{ueve} and for~$z$ in a $\Delta$--domain, the estimate
\begin{equation}\label{eq:ud_odd}
u_{2h+1}(z) = \frac 1 2 g_h(z)^2 + O(A^h)
\end{equation}
and, similarly,
\begin{equation}\label{eq:ud_eve}
u_{2h}(z) = \frac 1 2 g_{h-1}(z)^2 y_{h-2}(z) + \frac 1 6 g_{h-1}(z)^3 
+O(A^h).
\end{equation}
The latter asymptotic form may  be further simplified:
by Lemmas~\ref{lem:rho1} and~\ref{lem:up_uniform}, 
for $z\to\rho$ in a sandclock, we have
$$y-e_h=1-O(\sqrt{1-z/\rho})-e_h, \qquad 
|g_h| \le |e_{h-1}| = O(1/h),$$
and it follows that, in this sandclock, 
\begin{equation}\label{eq:ud_even}
u_{2h}(z) = \frac 1 2 g_{h-1}(z)^2 \pran{1+O(1/h)+O(\sqrt{1-z/\rho})}.
\end{equation}
(The cubic term $\frac{1}{6}g_{h-1}(z)^3$ essentially corresponds to trees having a centre 
from which
	there spring three trees of equal height; such configurations are still negligible, 
	but now polynomially, rather than exponentially.)
Additionally,
in a tube, \emph{all} terms in~\eqref{eq:ud_odd} and~\eqref{eq:ud_eve} are exponentially small, 
by virtue of Equation~\eqref{crit2} of Lemma~\ref{lem:criterion} and 
Proposition~\ref{prop:away}; the induced contributions for coefficients 
are thus going to be exponentially small, and we do not need to discuss these any further.

In a way similar to the  asymptotic simplification~(\ref{eq:gh_approx})  of $e_{h}-e_{h+1}\equiv g_{h+1}$,  the
estimates of (\ref{eq:ud_odd})  and (\ref{eq:ud_even}) now suggest to
introduce   the  following   approximation of $u_d$,
\begin{equation}\label{uddef}
\hat
u_{d}:=2  (1-y)^4 \frac{y^{d}}{(1-y^{d/2})^4},
\end{equation}
\emph{regardless of the the
parity}  of   $d$: we have (in a sandclock)
\begin{equation}\label{uddef2}
u_d  = \hat  u_d
\pran{1+O(1/d)+O(\sqrt{1-z/\rho})}.
\end{equation}

Following the    line  of   proof of  Theorems~\ref{clt},   \ref{llt},
and~\ref{thm:moments},  it is now  a  routine matter  to  work out the
consequences,  at  the   level     of  coefficients, of      the  main
approximations~\eqref{uddef} and~\eqref{uddef2}.   Note that, since we
have access to generating functions  of diameter \emph{exactly}~$h$, we start
with   a  \emph{local  limit}   law,  then  proceed   to estimate   the
distribution function by summation. Figure~\ref{diam-fig}
presents supporting numerical data for the 
local limit law of diameter.

\begin{figure}\small
\begin{center}
\includegraphics[width=5.8truecm]{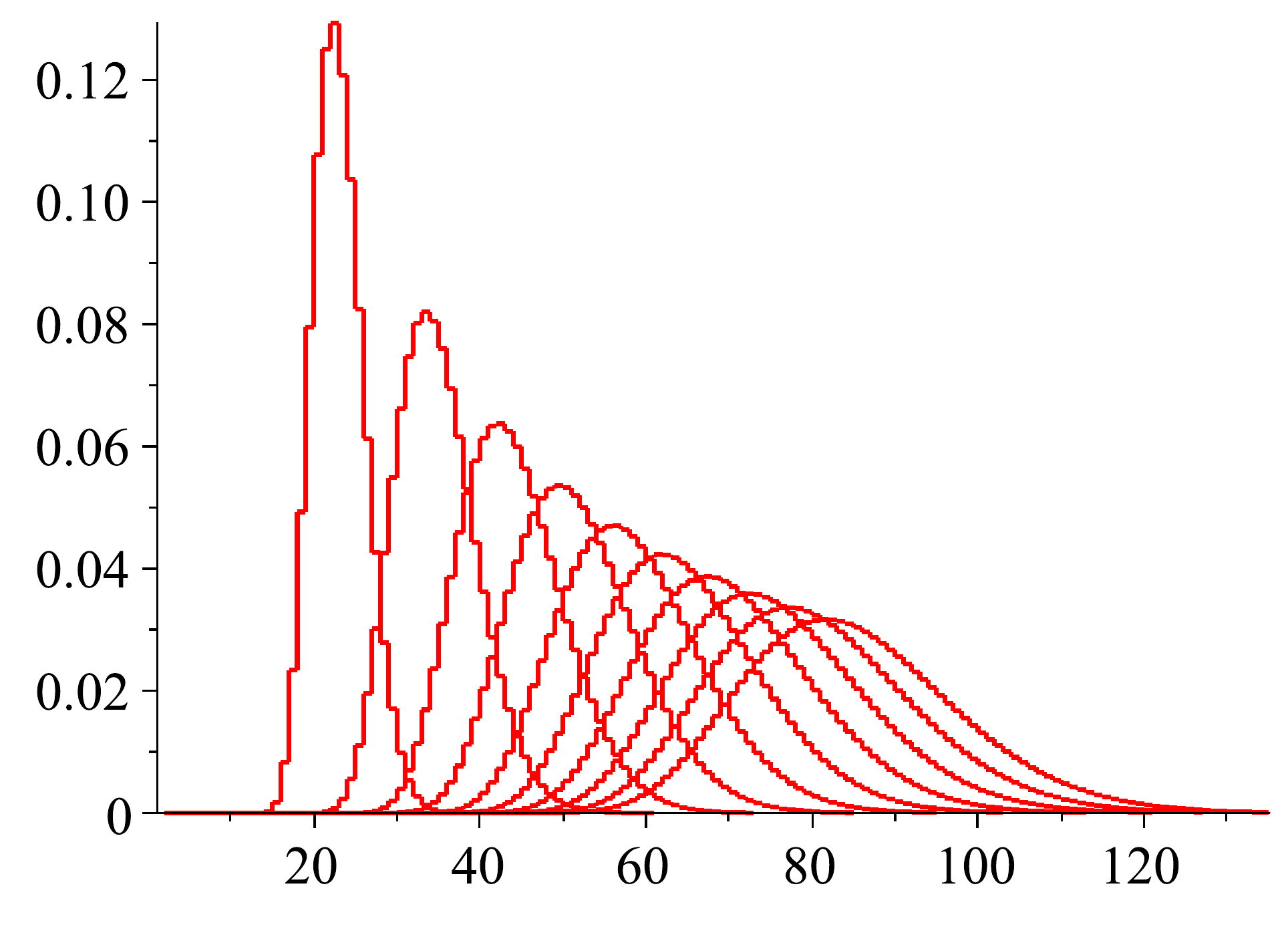}\quad
\includegraphics[width=5.8truecm]{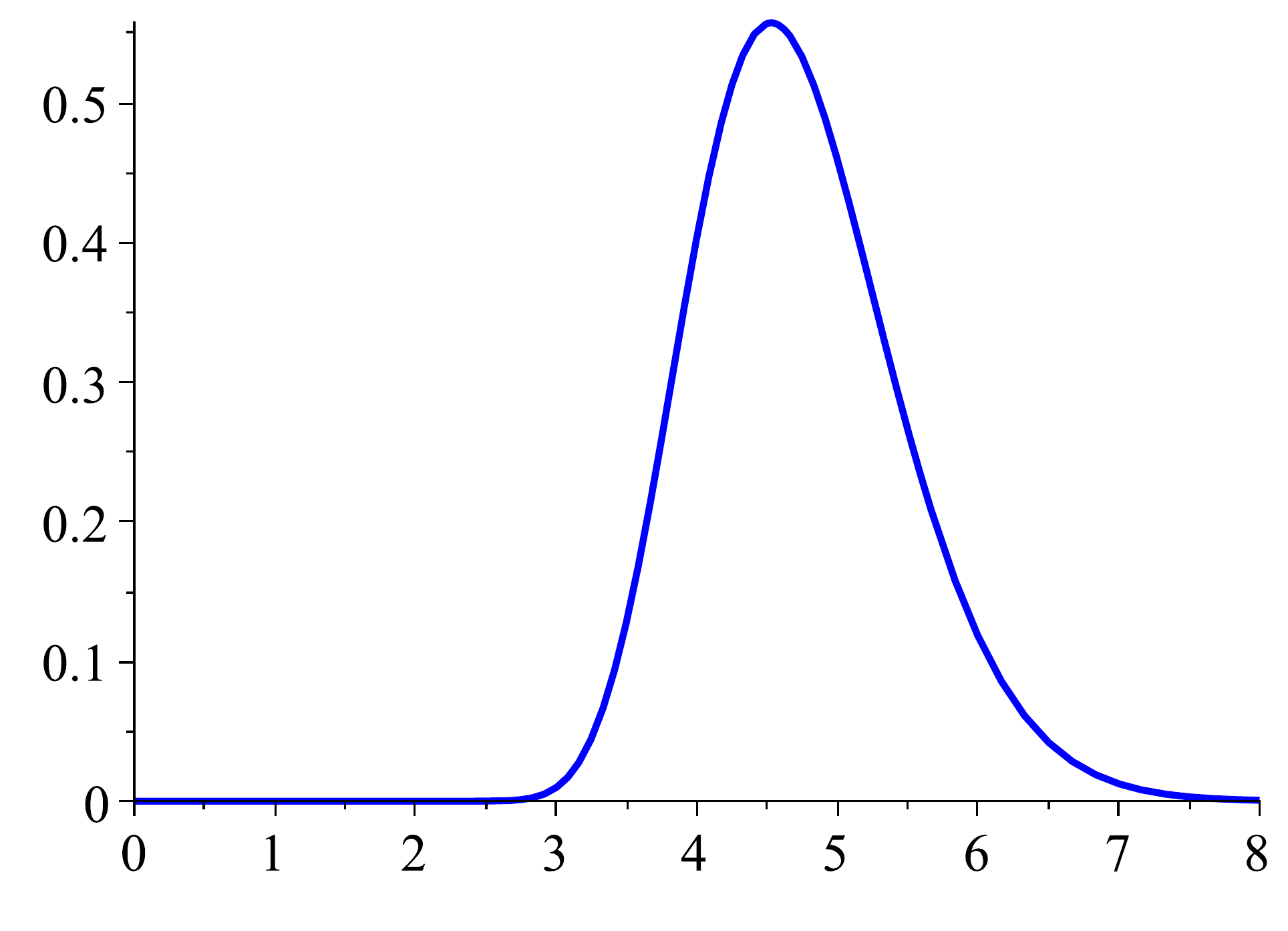}
\end{center}
\caption{\label{diam-fig}\small
\emph{Left}: the raw histograms of the distribution of diameter
in unrooted trees,
for $n=50,100,150,\ldots,500$. 
\emph{Right}: a plot of the limit density function $\widetilde\theta(x)$
of Theorem~\ref{locdiam-thm}.}
\end{figure}

\begin{theorem}[Local limit law for diameter] \label{locdiam-thm}
The diameter $D_n$  of an unrooted  tree  sampled from  $\mathcal U_n$
uniformly at  random  satisfies  a local  limit  law:  for $x$  in any
compact set of  $\R_{>0}$,  uniformly, with $(x/\lambda) \sqrt{n}$ an integer, one has: 
$$\lim_{n\to\infty}
\p{D_n  = 
(x/\lambda)\sqrt  n } 
= \frac{\lambda}{\sqrt{n}}\widetilde\vartheta(x)
$$
where \qquad $\ds \widetilde\vartheta(x)=
\frac 1 {768} \sum_{k\ge 1} k(k^2-1) 
\pran{k^5 x^5-80 k^3 x^3 + 960 k
x} e^{-k^2x^2 /16}.$
\end{theorem}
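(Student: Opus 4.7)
The plan is to extend the singularity-analysis machinery of the proofs of Theorems~\ref{clt} and~\ref{llt} to the generating function $u_d(z)$, using the asymptotic simplification $u_d\sim\hat u_d$ recorded in~\eqref{uddef2} and then dividing by the asymptotic form of $u_n$ supplied by Lemma~\ref{lem:asympt_un}. Since $u_n\sim\tfrac{\lambda^3}{4\sqrt\pi}n^{-5/2}\rho^{-n}$ and the target density is of order $n^{-1/2}$, the goal is to establish that $[z^n]u_d(z)\sim\tfrac{2\lambda^4}{n^3}\rho^{-n}\widetilde J(x/2)$ for a suitable Hankel integral $\widetilde J$ to be identified.

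I would apply Cauchy's formula with the very same contour $\gamma_1\cup\cdots\cup\gamma_5$ of Figure~\ref{fig:tube-issue}. By Propositions~\ref{prop:away}--\ref{prop:bowtie} one has $|e_h|,|g_h|\to 0$ uniformly on $\gamma_3,\gamma_4,\gamma_5$ and on the outer portions of $\gamma_1,\gamma_2$; combined with the smallness of the Cauchy kernel $|z|^{-n}$ and with the exponential bound~\eqref{polyab} on the P\'olya terms, this forces the contributions of these portions to be exponentially smaller than $\rho^{-n}/n^{5/2}$, hence negligible after division by $u_n$. The main term therefore comes entirely from the near-$\rho$ segments $\tilde\gamma_1,\tilde\gamma_2$.

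On $\tilde\gamma_1\cup\tilde\gamma_2$ I parameterise $z=\rho(1-t/n)$ with $|t|\le K\log^2 n$, and fix $d=x\sqrt n/\lambda$ (so $d/2=x\sqrt n/(2\lambda)$). The singular expansion~\eqref{singy} gives, uniformly in~$t$,
\begin{equation*}
(1-y)^4\sim\lambda^4(t/n)^2,\qquad y^{d/2}\sim e^{-(x/2)\sqrt t},\qquad y^d\sim e^{-x\sqrt t},
\end{equation*}
so that, combining~\eqref{uddef} with~\eqref{uddef2},
\begin{equation*}
u_d(z)=\frac{2\lambda^4\,t^2}{n^2}\cdot\frac{e^{-x\sqrt t}}{(1-e^{-(x/2)\sqrt t})^4}\,\bigl(1+O(\log^\star n/\sqrt n)\bigr).
\end{equation*}
Plugging into Cauchy's integral, using $z^{-n-1}\sim\rho^{-n-1}e^t$ and $dz=-(\rho/n)\,dt$, and completing the truncated contour to the full Hankel contour $\mathcal L$ (at the cost of an error that is exponentially small in $\log n$) yield
\begin{equation*}
[z^n]u_d\sim\frac{2\lambda^4}{n^3}\,\rho^{-n}\,\widetilde J(x/2),\qquad
\widetilde J(X):=\frac{1}{2i\pi}\int_{\mathcal L}\frac{t^2\,e^{-2X\sqrt t}}{(1-e^{-X\sqrt t})^4}\,e^t\,dt.
\end{equation*}

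It remains to evaluate $\widetilde J(X)$ in closed form. The generating-function identity $\tfrac{w^2}{(1-w)^4}=\tfrac{1}{6}\sum_{k\ge1}k(k^2-1)w^k$, taken at $w=e^{-X\sqrt t}$, turns $\widetilde J$ into a series of elementary Hankel integrals, each computable by the substitution $u=-i\sqrt t$ followed by completion of the square in the resulting Gaussian, exactly the device used in the derivation of~\eqref{Jx}. Four successive applications of $-\partial_a$ to the base identity $\tfrac{1}{2i\pi}\int_{\mathcal L}e^{t-a\sqrt t}\,dt=\tfrac{a}{2\sqrt\pi}e^{-a^2/4}$ deliver
\begin{equation*}
\frac{1}{2i\pi}\int_{\mathcal L}t^2\,e^{t-a\sqrt t}\,dt=\frac{a(a^4-20a^2+60)}{32\sqrt\pi}\,e^{-a^2/4},
\end{equation*}
and substituting $a=kX$, resumming, dividing by $u_n$ via Lemma~\ref{lem:asympt_un}, and finally setting $X=x/2$ produce the explicit density $\widetilde\vartheta(x)$ stated in the theorem. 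The main obstacle is error bookkeeping: since $[z^n]u_d/u_n$ is itself only of order $n^{-1/2}$, every source of approximation---the sandclock estimate~\eqref{uddef2}, the singular expansion of $y$, the replacement of $z^{-n-1}$ by $\rho^{-n-1}e^t$, and the truncation of $\mathcal L$---must be shown to contribute a relative error that is $o(1)$ uniformly for $x$ in a compact subset of $\R_{>0}$.
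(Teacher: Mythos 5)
Your proposal reproduces the paper's own proof of Theorem~\ref{locdiam-thm} essentially step by step: the same Cauchy contour, the same reduction to the near-$\rho$ rectilinear portions, the same change of variable $z=\rho(1-t/n)$, the same Hankel integral (the paper's $J_2$), the same series identity $w^2/(1-w)^4=\tfrac16\sum_{k\ge1}k(k^2-1)w^k$, and the same normalization by $u_n$ via Lemma~\ref{lem:asympt_un}. Your closed-form evaluation $\frac{1}{2i\pi}\int_{\mathcal L}t^2e^{t-a\sqrt t}\,dt=\frac{a(a^4-20a^2+60)}{32\sqrt\pi}e^{-a^2/4}$ and the resulting $\widetilde\vartheta(x)$ are correct (indeed, your argument $\widetilde J(x/2)$ and the term $60kX$ in the resummed series correct two small misprints in the paper's displayed formula for $J_2$, which wrote $J_2(\lambda x/2)$ and $60$ in place of $J_2(x/2)$ and $60kX$).
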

\begin{proof}
We    start from the approximations~\eqref{uddef}  and~\eqref{uddef2},
then  make  use of  Cauchy's coefficient  formula  together  with  the
contour $\gamma$ specified in   the  proof of Theorem~\ref{clt}.    As
noted  already, the contributions of the  outer circle $\gamma_3$, the
joins $\gamma_4$ and   $\gamma_5$ and  the  further   portions of  the
rectilinear pieces  $\gamma_1$ and $\gamma_2$ are exponentially small,
so that  we can    restrict attention to   what  happens  in a   small
sandclock, along $\tilde \gamma_1$ and $\tilde \gamma_2$.

The change of  variable $z=\rho(1-t/n)$ and  approximations that
 are  justified  in Equations~\eqref{Ehn} to~\eqref{ap2} of the  proof  of  Theorem~\ref{clt}	lead to
$$[z^n]\hat    u_d(z)  =  -2\rho^{-n}   n^{-3}  \lambda^4 J_2(\lambda
 x/2)+O(\rho^{-n}   n^{-7/2}   \log^\star    n),$$    
where we have set
 $$J_2(X):=\frac 1  {2i\pi}   \int_{\mathcal  L}  \frac{e^{-2X   \sqrt
 t}}{(1-e^{-X\sqrt t})^4}  t^2 e^t dt,$$  with $\mathcal L$ that goes from
 $-\infty +i\infty$ to $-\infty-i\infty$ and winds to the right of the
 origin.  As in Equations~\eqref{Ix} and~\eqref{Jx}, we   can make $J_2(X)$  explicit: \begin{align}\label{eq:J2}
 J_2(X) &= \frac 1 {2i\pi}\sum_{k\ge 3} \frac{k(k-1)(k-2)} 6
\int_{\mathcal L} e^{-X(k-1)\sqrt t}  t^2 e^t dt \\ &=  - \frac 1 {192
\sqrt   \pi}\sum_{k\ge 2}  k(k^2-1) \pran{k^5   X^5-20  k^3 X^3 +  60}
e^{-k^2 X^2/4}.\nonumber \end{align}  
A normalization by~$u_n$, as provided by Lemma~\ref{lem:asympt_un},   then
yields the claim.
\end{proof}

\begin{theorem}[Limit distribution of diameter]\label{centdiam-thm}
The diameter $D_n$ of a unrooted tree sampled from $\mathcal U_n$ uniformly at random admits a limit distribution: for $x$ in a compact set of $\R_{>0}$, we have
$$\lim_{n\to\infty}\p{D_n \ge (x/\lambda)\sqrt n } = 
\widetilde\Theta(x),$$
where
$\ds\widetilde\Theta(x)\equiv \int_x^\infty \widetilde\vartheta(w)\, dw=
\frac 1 {96}\sum_{k\ge 1} (k^2-1) (k^4 x^4-48 k^2 x^2 +192) e^{-k^2x^2/16}.$
\end{theorem}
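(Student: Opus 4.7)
The plan is to deduce this integrated limit law from the local limit law of Theorem~\ref{locdiam-thm} by the standard Riemann-sum-plus-tail technique that converts pointwise density estimates into tail estimates.

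First I would write
\[
\Pr\bigl(D_n \geq (x/\lambda)\sqrt n\bigr)
= \sum_{d \geq (x/\lambda)\sqrt n} \Pr(D_n = d),
\]
and introduce a large cut-off $A>x$. For indices $d$ with $(x/\lambda)\sqrt n \leq d \leq (A/\lambda)\sqrt n$, Theorem~\ref{locdiam-thm} gives, uniformly in this range,
\[
\Pr(D_n = d) = \frac{\lambda}{\sqrt n}\widetilde\vartheta\bigl(\lambda d/\sqrt n\bigr)\bigl(1 + o(1)\bigr).
\]
Summing these terms produces a Riemann sum with mesh $\lambda/\sqrt n$ for the continuous integrand $\widetilde\vartheta$ on the compact interval $[x,A]$, which converges to $\int_x^A \widetilde\vartheta(w)\,dw$ as $n\to\infty$.

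Next I would control the upper tail $\Pr(D_n \geq (A/\lambda)\sqrt n)$ so that it can be made arbitrarily small uniformly in $n$ by taking $A$ large. The natural route is to transport the moderate-deviation argument underlying Theorem~\ref{thm:moderate} to the diameter setting: estimate the Cauchy integral of the generating function $\sum_{d'\geq d} u_{d'}(z)$ by replacing each $u_{d'}$ by its approximant $\hat u_{d'}$ via~\eqref{uddef2} (the cubic and P\'olya corrections are negligible by~\eqref{bndpol} and the discussion surrounding~\eqref{eq:ud_even}), and then use the uniform bounds on $e_h$ granted by Proposition~\ref{prop:eh_sim} to control the resulting contour integral. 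Since each $\hat u_{d'}$ carries the factor $y^{d'} \sim \exp(-\lambda d'\sqrt{t/n})$ along the rescaled Hankel contour $z = \rho(1 - t/n)$, the induced tail exhibits the same Gaussian-type decay in $A$ that $\widetilde\Theta$ does in $x$.

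Finally, the closed-form expression claimed for $\widetilde\Theta(x)$ is verified by a short termwise computation: differentiating the proposed series reproduces exactly $-\widetilde\vartheta(x)$ of Theorem~\ref{locdiam-thm}, and both sides vanish at infinity, forcing equality. The main obstacle is the uniform tail estimate, because the approximation~\eqref{uddef2} is valid only in a fixed sandclock at~$\rho$; one must therefore marry the regimes $d\to\infty$ and $z\to\rho$ carefully. However, this is precisely the uniformity already secured by Proposition~\ref{prop:eh_sim} and inherited by~\eqref{eq:ud_eve}, so once the tail bound is in place, letting $A\to\infty$ after $n\to\infty$ and appealing to monotone/dominated convergence completes the argument.
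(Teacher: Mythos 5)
Your strategy is sound and lands on the same closed form, but the route is genuinely different from the paper's. You work at the probability level: sum the local law of Theorem~\ref{locdiam-thm} over $d$ in a compact range $[(x/\lambda)\sqrt n,(A/\lambda)\sqrt n]$, then supply a separate tail estimate to let $A\to\infty$. Because Theorem~\ref{locdiam-thm} is stated only for $x$ in a compact subset of $\R_{>0}$, this forces you to establish a moderate-deviations bound for the diameter, which you acknowledge as the main obstacle and propose to import from the height analysis; this step is plausible but not done in the paper and would need to be carried out. The paper instead stays at the level of Cauchy coefficients: it sums the approximants $\hat u_\ell$ over $\ell\ge d$ \emph{inside} the contour integral, approximates the resulting Riemann sum on the rescaled Hankel contour by the integral $\int_x^\infty J_2(\lambda s/2)\,ds$, evaluates that integral term-by-term from~\eqref{eq:J2}, and finally normalizes by $u_n$ via Lemma~\ref{lem:asympt_un}. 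The advantage of the paper's route is that the uniform-in-$\ell$ control provided by~\eqref{uddef2} (backed by Lemma~\ref{lem:up_uniform} and Proposition~\ref{prop:eh_sim}) already makes the tail of the $\ell$-sum negligible -- the Gaussian decay of $J_2$ is built in -- so no separate cut-off or moderate-deviation argument is required. Your final verification step (that the displayed series for $\widetilde\Theta$ integrates $\widetilde\vartheta$) is exactly what the paper does, just phrased as an explicit evaluation of $\int_x^\infty J_2$ rather than as a consistency check.
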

\begin{proof}
The convergence of distribution functions results from earlier approximations through integration.
Indeed, approximating a Riemann sum by the corresponding
integral, we find, for $d=x\sqrt n$,
$$[z^n]\sum_{\ell\ge d}u_{\ell}\sim[z^n]\sum_{\ell\ge d} \hat u_\ell \sim -2 \lambda^4 \rho^n n^{-3/2} \int_x^\infty J_2(\lambda s/2) ds,$$
as $n\to\infty$.
The integral is easily computed from (\ref{eq:J2}): write $X=\lambda x/2$ to obtain
\begin{align*}
\int_x^\infty J_2(\lambda s/2) ds
&=\frac 1 {3 \lambda} \sum_{k\ge 1} (k^2-1) \frac 1 {2i\pi} \int_{\mathcal L} e^{k X \sqrt t} t^{3/2} e^t dt\\
&=-\frac 1 {3 \cdot 2^4 \sqrt \pi} \sum_{k\ge 1} (k^2-1)(k^4 X^4-12 k^2X^2 + 12) e^{-k^2X^2/4}.
\end{align*}
A final normalization based on Lemma~\ref{lem:asympt_un} yields the result.
\end{proof}

\begin{theorem}[Moments of diameter] \label{momdiam-thm}
The moments of the diameter $D_n$ of a random unrooted tree with $n$ leaves satisfy
$$
\E{D_n}\sim \frac{8}{3\lambda} \sqrt{\pi n} \qquad
	\E{D_n^2}\sim \frac{16}{3 \lambda ^2} \left(1+\frac{\pi^2}{3}\right) n \qquad
	\E{D_n^3}\sim \frac{64}{\lambda^{3}} \sqrt {\pi n^3},$$
and, for all $r>3$,
\begin{align*}
\E{D_n^r}\sim \frac{2^{2r}}3 r(r-1)(r-3) \Gamma(r/2) (\zeta(r-2)-\zeta(r)) \lambda^{-r} n^{r/2}.
\end{align*}
\end{theorem}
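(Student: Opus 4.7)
The plan is to follow the Mellin transform strategy used in the proof of Theorem~\ref{thm:moments}, now applied to the approximation $\hat u_d$ from~\eqref{uddef}. Since $u_d = \hat u_d(1+O(1/d)+O(\sqrt{1-z/\rho}))$ in a sandclock by~\eqref{uddef2}, and the remaining pieces (the initial contributions $u_1=z^2$, $u_2=z^3$ and the P\'olya contributions of magnitude $O(A^h)$ with $A<1$) add only analytic or exponentially small terms at the level of $[z^n]$, the moment generating function $M_r(z):=\sum_d d^r u_d(z)$ has the same leading singular behavior at $z=\rho$ as its approximation $\widehat M_r(z):=\sum_d d^r \hat u_d(z)$. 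The goal is thus to extract the singular expansion of $\widehat M_r$ near $\rho$, apply singularity analysis along the contour used in the proof of Theorem~\ref{clt}, and normalize by $u_n$ from Lemma~\ref{lem:asympt_un}.

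Under the substitution $y=e^{-2s}$, so that $y^{d/2}=e^{-ds}$, one has
$$\widehat M_r(z) = 2(1-e^{-2s})^4\,H_r(s), \qquad H_r(s):=\sum_{d\ge1}\frac{d^r\,e^{-2ds}}{(1-e^{-ds})^4}.$$
Expanding $1/(1-x)^4=\sum_{k\ge0}\binom{k+3}{3}x^k$, reindexing with $j=k+2$, and using $\binom{j+1}{3}=(j^3-j)/6$, a direct computation yields the Mellin transform
$$H_r^\star(u) = \frac{\Gamma(u)\,\zeta(u-r)\,\bigl(\zeta(u-3)-\zeta(u-1)\bigr)}{6},$$
with fundamental strip $\max(r+1,4)<\Re(u)$. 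The asymptotic expansion of $H_r(s)$ as $s\to0^+$ is then read off from the poles of $H_r^\star$ to the left of this strip~\cite{FlGoDu95}, and Lemma~\ref{lem:rho1} translates $s\to 0$ into $z\to\rho$ via $s\sim(\lambda/2)\sqrt{1-z/\rho}$.

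For the generic case $r>3$, the dominant singularity of $H_r^\star$ is the simple pole of $\zeta(u-r)$ at $u=r+1$, giving $H_r(s)\sim \tfrac{\Gamma(r+1)(\zeta(r-2)-\zeta(r))}{6}s^{-r-1}$. Combining with $(1-e^{-2s})^4\sim 16 s^4$ yields $\widehat M_r(z)\sim C_r(1-z/\rho)^{(3-r)/2}$ with an explicit constant $C_r$; singularity analysis then produces $[z^n]\widehat M_r\sim \rho^{-n}n^{(r-5)/2}C_r/\Gamma((r-3)/2)$. Dividing by $u_n$ and using the Legendre duplication formula to rewrite $\Gamma(r+1)/\Gamma((r-3)/2)$ as $r(r-1)(r-3)2^{r-3}\Gamma(r/2)/\sqrt\pi$ reproduces the stated closed form.

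The main obstacle lies in the boundary cases $r\in\{1,2,3\}$, where pole coincidences or cancellations reshape the leading singular term. For $r=3$, the poles of $\zeta(u-r)=\zeta(u-3)$ and of $\zeta(u-3)-\zeta(u-1)$ both sit at $u=4$, producing a double pole whose inversion contributes $s^{-4}\log(1/s)$ and hence, after multiplication by $(1-e^{-2s})^4$, a $\log(1/(1-z/\rho))$ singular term in $\widehat M_3$. For $r=2$, the dominant residue at $u=4$ contributes only a constant (analytic in $z$) once combined with the $16 s^4$ prefactor; the genuine singular term instead arises from the simple pole at $u=3$ combined with the next-order term $-32 s^5$ of $(1-e^{-2s})^4$, producing a $(1-z/\rho)^{1/2}$ contribution whose coefficient yields $16(1+\pi^2/3)/(3\lambda^2)$. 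For $r=1$, the factor $\zeta(u-r)=\zeta(u-1)$ coincides with the $\zeta(u-1)$ inside the difference, producing a double pole at $u=2$ and thus a $(1-z/\rho)\log(1/(1-z/\rho))$ singular term, whose coefficient decays like $\rho^{-n}/n^2$ in agreement with $\mathbb{E}[D_n]\sim(8/(3\lambda))\sqrt{\pi n}$. Each of these three cases requires individual bookkeeping of residues and of the Laurent expansion of $(1-e^{-2s})^4$, but the combination with singularity analysis is routine once the residue structure has been made explicit.
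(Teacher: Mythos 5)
Your strategy mirrors the paper's own proof: form the moment generating function from $\hat u_d$, substitute an exponential variable for $y$, take the Mellin transform, read the dominant pole, and normalize by $u_n$. (The paper writes $y=e^{-\tau}$; your $y=e^{-2s}$ is just $\tau=2s$, and $F_r^\star(s)=2^s H_r^\star(s)$, so the transforms agree.) Your Mellin transform $H_r^\star(u)=\tfrac16\Gamma(u)\zeta(u-r)\bigl(\zeta(u-3)-\zeta(u-1)\bigr)$ is correct, and for $r>3$ your residue-plus-duplication computation reproduces the claimed constant exactly; I verified it line by line.

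There is, however, a genuine gap in your treatment of the low moments, and it stems from the first paragraph. You assert that $M_r$ and $\widehat M_r$ ``have the same leading singular behavior at $z=\rho$'' because the only discrepancies are the initial terms $u_1,u_2$ and the exponentially small P\'olya corrections. That is not the full discrepancy: \eqref{uddef2} only gives $u_d=\hat u_d\bigl(1+O(1/d)+O(\sqrt{1-z/\rho})\bigr)$, and for each \emph{fixed} small $d$ the function $u_d(z)$ is a polynomial (entire), whereas $\hat u_d(z)=\tfrac{32}{d^4}-\tfrac{64}{d^4}(1-y)+O\bigl((1-y)^2\bigr)$ carries a genuine $\sqrt{1-z/\rho}$ singularity through $1-y\sim\lambda\sqrt{1-z/\rho}$. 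Summing over $d$ gives a spurious $\sqrt{1-z/\rho}$ term in $\widehat M_r$ of size $\propto\zeta(4-r)\sqrt{1-z/\rho}$ that is absent from $M_r$. For $r>3$ this is sub-dominant and harmless, for $r=3$ the $\log$ beats it, but for $r=2$ it is the \emph{same} order as the genuine $(1-z/\rho)^{1/2}$ term, and for $r=1$ it is \emph{more} singular than the genuine $(1-z/\rho)\log(1-z/\rho)$ term — so a naive reading of $[z^n]\widehat M_1$ would predict $\E{D_n}=\Theta(n)$ rather than $\Theta(\sqrt n)$. Concretely, your claim that for $r=2$ the pole at $u=4$ ``contributes only a constant'' is wrong: multiplied by the $-64s^5$ term of $(1-e^{-2s})^4$ (you wrote $-32s^5$, which is also off by a factor $2$) it contributes an $s$-order singular term $-\tfrac{64\pi^2}{3}s$, exactly the spurious piece. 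The proof of $\E{D_n^r}$ for $r\le2$ therefore needs an explicit argument that these spurious small-$d$ contributions cancel against the approximation error $\sum_d d^r(\hat u_d-u_d)$ — which they do, as can be checked using the expansion of $\hat u_d$ above — but your proposal (and, to be fair, the paper's own rather terse handling of $r\le 3$) does not supply this step.
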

\begin{proof}
By definition, the moments of $D_n$ are given by
\begin{equation}\label{exmom}
\E{D_n^r}=\frac 1 {u_n} [z^n]\sum_{d\ge 1} d^r u_d(z),
\end{equation}
and, from~\eqref{uddef} and~\eqref{uddef2} once more, we are led to the approximation $\E{D_n^r}\sim
([z^n]\hat M_r)/u_n$, where
$$\hat M_r(z):= 2(1-y)^4 \sum_{d\ge 1} d^r \frac{y^d}{(1-y^{d/2})^4}$$ 
results from replacing~$u_d$ by $\hat u_d$ in the generating function of~\eqref{exmom}.

As for the moments of height, the singular asymptotic form of~$\hat M_r(z)$ is
conveniently determined
by means of the Mellin transform technology. 
Set $y=e^{-\tau}$, so that $z\to\rho$ corresponds to $\tau\to0$, with $\tau\sim\lambda\sqrt{1-z/\rho}$.
We then need the asymptotic estimation of $\hat M_r(e^{-\tau})$ when $\tau\to 0$. Define
$$F_r(\tau):=\sum_{d\ge 1} d^r \frac{e^{-d \tau}}{(1-e^{-d\tau /2})^4},$$
which is such that $M_r(z)\sim 2\lambda^4\tau^4 F_r(\tau)$.
By the ``harmonic sum rule''~\cite{FlGoDu95},
the Mellin  transform $F_r^\star(s)$ of $F_r(\tau)$ satisfies, for $\Re(s)>\max\{1+r, 4\}$, 
$$F_r^\star(s)=\frac {2^s}6 \zeta(s-r) \pran{\zeta(s-3)-\zeta(s-1)} \Gamma(s).$$

The singularities in a right half-plane
are known to dictate the asymptotic expansion of $F_r(\tau) $, as $\tau\to0$. 
For $r>3$, the main contribution comes from a simple pole at $s=r+1$ (due to the factor $\zeta(s-r)$),
and we find
$$F_r(\tau)\sim \frac{2^{r}}{6} (\zeta(r-2)-\zeta(r))\Gamma(r+1)\tau^{-r-1},\qquad \tau\to0,$$
which provides in turn the main term in the expansion of~$\hat M_r(z)$ as $z\to\rho$:
\[
M_r(z)\sim \frac{2^r}{3}\lambda^{-r}\left(\zeta(r-2)-\zeta(r)\right)F(r+1)\left(1-z/\rho\right)^{-r+1},
\qquad z\to\rho.
\]
Singularity analysis combined with
the estimate of $u_n$ in Lemma~\ref{lem:asympt_un} and the duplication
formula for the Gamma--function then automatically yields the asymptotic form
of $\E{D_n^r}$, in the case $r>3$.

For $r\le 3$,   the approach is similar, but   a little more  care  is
required.  For $r=1,2$  one needs to  consider the second terms of the
singular  expansion    of    $F_r^\star(s)$,  at   $s=2$   and  $s=3$,
respectively.   Also, the cases $r=1$    and $r=3$ involve logarithmic
terms due to  double poles of $F_1^\star(s)$ and $F_3^\star(s)$
at $s=2$ and $s=4$. The claim follows by routine Mellin technology and singularity analysis.
\end{proof}

\section{\bf Conclusion}\label{sec:concl}

\def\Pr{\mathbb{P}}

We finally conclude with two corollaries and a general comment. First, as a byproduct
of~\eqref{uddef} and~\eqref{uddef2}, via summation and singularity analysis,
we can estimate the proportion of centred and bicentred trees.
\begin{corollary}\label{cor-oddeve}
There are asymptotically as many centred trees (trees of even diameter)
as bicentred trees (trees of odd diameter):
\[
[z^n]u^{\operatorname{odd}}(z) \sim [z^n]u^{\operatorname{even}}(z) \sim \frac12 u_n.
\]
\end{corollary}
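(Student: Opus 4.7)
My strategy is to identify the $(1-z/\rho)^{3/2}$ coefficient in the singular expansion of $u^{\operatorname{odd}}(z)$ and show that it equals exactly half the corresponding coefficient $\tfrac{1}{3}\lambda^3$ for $u(z)$ provided by Lemma~\ref{lem:asympt_un}. Because $u^{\operatorname{odd}}(z)+u^{\operatorname{even}}(z)=u(z)$ up to a polynomial of low degree (from the trees of diameter~$\le 2$), the same estimate then follows for $u^{\operatorname{even}}(z)$ by subtraction, and singularity analysis yields $[z^n]u^{\operatorname{odd}}(z)\sim [z^n]u^{\operatorname{even}}(z)\sim \tfrac{1}{2}u_n$.

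The extraction of this coefficient parallels the proof of Theorem~\ref{momdiam-thm}. Starting from the approximations~\eqref{uddef}--\eqref{uddef2}, valid inside a sandclock at~$\rho$, and recalling that the P\'olya corrections in~\eqref{uodd}--\eqref{ueve} are exponentially small, I would write
\[
u^{\operatorname{odd}}(z) \;\sim\; 2(1-y)^4\sum_{h\ge 0}\frac{y^{2h+1}}{(1-y^{h+1/2})^4}.
\]
Setting $y=e^{-\tau}$ with $\tau\sim\lambda\sqrt{1-z/\rho}$ and recasting the inner sum as a harmonic sum of $g(w):=e^{-2w}/(1-e^{-w})^4$ over the half-integers $h+\tfrac12$, the Mellin transform reduces, after expanding $(1-e^{-w})^{-4}=\sum_{k\ge 0}\binom{k+3}{3}e^{-kw}$, to
\[
G^*(s) \;=\; (2^s-1)\,\zeta(s)\,\frac{\Gamma(s)}{6}\bigl(\zeta(s-3)-\zeta(s-1)\bigr),
\]
the analogue for $u(z)$ being $2^s\zeta(s)\Gamma(s)(\zeta(s-3)-\zeta(s-1))/6$. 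The $(1-z/\rho)^{3/2}$ coefficient is read off from the $\tau^3$ term of $2\tau^4 G(\tau)$ as $z\to\rho$, hence from the simple pole of $G^*$ at $s=1$. Using $\zeta(-2)=0$ and $\zeta(0)=-\tfrac12$, the residue equals $\tfrac{1}{12}$, yielding a $(1-z/\rho)^{3/2}$ coefficient of $\tfrac{\lambda^3}{6}$, exactly half of $\tfrac{\lambda^3}{3}$.

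The main technical point to manage is the propagation of the $O(1/d)+O(\sqrt{1-z/\rho})$ errors in $u_d-\hat u_d$ through the sum over~$d$ and through Cauchy's coefficient formula, exactly as in the proof of Theorem~\ref{momdiam-thm}; a check of the Mellin poles introduced by these corrections (which appear only at $s\le 0$ or produce purely analytic $(1-z/\rho)^2$ contributions) confirms that they do not affect the $(1-z/\rho)^{3/2}$ coefficient of interest. A cheaper, less informative, alternative would be to invoke the local limit law of Theorem~\ref{locdiam-thm}: the probability $\sum_h\Pr(D_n=2h+1)$ is then a Riemann sum with spacing $2\lambda/\sqrt{n}$ approximating $\tfrac{1}{2}\int_0^\infty \widetilde\vartheta(x)\,dx=\tfrac12$.
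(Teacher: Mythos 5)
Your proposal is correct and follows essentially the route the paper indicates, namely summing the approximations~\eqref{uddef}--\eqref{uddef2} over odd (respectively even) $d$ and applying singularity analysis via the Mellin-transform machinery of Theorem~\ref{momdiam-thm}. The residue of $G^*(s)$ at the simple pole $s=1$ (using $\zeta(-2)=0$ and $\zeta(0)=-\tfrac12$) is indeed $\tfrac{1}{12}$, producing a $(1-z/\rho)^{3/2}$ coefficient of $\lambda^3/6$ for $u^{\operatorname{odd}}(z)$, exactly half that of $u(z)$ from Lemma~\ref{lem:asympt_un}, and the conclusion follows by subtraction and singularity analysis.
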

\noindent
This perhaps unsurprising observation parallels one made by
Szekeres~\cite[p.~394]{Szekeres82}  in the case of  labelled trees,
where all degrees are allowed.

Next, a comparison of expectations of  height and diameter in random nonplane trees shows the following.
\begin{corollary}\label{cor-ratio}
The ratio of the expected diameter of an unrooted tree and the expected height of a 
rooted tree of the same size satisfies asymptotically
\[
\lim_{n\to\infty}\frac{\E{D_n}}{\E{H_n}}=\frac43.
\]
\end{corollary}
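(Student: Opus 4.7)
The plan is to obtain the corollary as an immediate consequence of the two first-moment asymptotics already established. From Theorem~\ref{thm:moments} (taking $r=1$), we have
\[
\E{H_n} \sim \frac{2}{\lambda}\sqrt{\pi n},
\]
and from Theorem~\ref{momdiam-thm} (again, the $r=1$ case), we have
\[
\E{D_n} \sim \frac{8}{3\lambda}\sqrt{\pi n}.
\]
Forming the ratio of these two asymptotic equivalents, the factors $\sqrt{\pi n}/\lambda$ cancel exactly, and we are left with
\[
\lim_{n\to\infty}\frac{\E{D_n}}{\E{H_n}} = \frac{8/(3\lambda)}{2/\lambda} = \frac{4}{3},
\]
which yields the statement.

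Since both numerator and denominator asymptotics are already proved (each via the same Mellin-transform / singularity-analysis machinery built in Sections~\ref{sec:estimates_eh}--\ref{sec:diam}, with logarithmic terms in the underlying Mellin transform forcing the $\sqrt{n}$ rather than $n$ leading order in the $r=1$ cases), no additional analytic work is required. The only ``obstacle,'' which is conceptual rather than technical, is the observation that the common normalising constant $\lambda=\sqrt{2\rho+2\rho^2 y'(\rho^2)}$ appears identically in both asymptotics—this is not accidental but reflects the fact that both height (of rooted trees) and diameter (of unrooted trees) are governed by the same square-root singularity of $y(z)$ at $\rho$, so that $\lambda$ drops out of the ratio and leaves a universal rational constant $4/3$.
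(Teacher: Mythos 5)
Your proof is correct and takes the route implicit in the paper, which presents the corollary as an immediate consequence of comparing the $r=1$ cases of Theorems~\ref{thm:moments} and~\ref{momdiam-thm}: forming the ratio $\frac{8/(3\lambda)\sqrt{\pi n}}{2/\lambda\sqrt{\pi n}}=\tfrac43$ is exactly the intended argument, and the cancellation of the common $\lambda^{-1}\sqrt{\pi n}$ factor is the whole point. One small inaccuracy in your closing aside: the double pole of the Mellin transform at $s=2$ when $r=1$ does not ``force $\sqrt{n}$ rather than $n$''---the power $n^{r/2}=n^{1/2}$ is the same one the generic $r\ge 2$ analysis would predict; what goes wrong at $r=1$ is that the generic constant becomes the indeterminate $r(r-1)\zeta(r)\big|_{r=1}=0\cdot\infty$, so the coefficient (not the exponent of $n$) must be re-derived from the residue at the double pole.
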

\noindent
Again, a similar observation was made by Szekeres~\cite[p.~396]{Szekeres82} regarding labelled trees
and the same property, with a ``universal'' $\frac43$ factor is expected to 
hold for any ``ordered'' tree family (i.e., trees whose nodes have neighbours that are distinguishable;
cf our Introduction), 
as argued heuristically by Aldous in~\cite{Aldous91c}. 

\begin{figure}
\def\vertsp{{\vphantom{X^X}}}

\begin{center}\renewcommand{\arraystretch}{1.5}
\begin{tabular}{l|c|c}
\hline\hline
& \emph{Cayley trees} & \emph{Otter trees} \\[2mm]
\hline\hline
mean depth & $\ds \sqrt{\frac{\pi n}{2}}$ & 
$\ds \frac{1^\vertsp}{\lambda}\sqrt{\pi n}$ \\[2mm]
\hline
mean height &  $\ds  \sqrt{2\pi n}$ & 
$\ds \frac{2^\vertsp}{\lambda}\sqrt{\pi n}$\\[2mm]
\hline 
mean diameter &  $\ds \frac{4}{3}  \sqrt{2\pi n}$ & 
$\ds \frac{8^\vertsp}{3\lambda}\sqrt{\pi n}$\\[2mm]
\hline
\end{tabular}
\end{center}

\small\caption{\label{compare-fig}\small
A table comparing the asymptotic forms  of the expectations of several
parameters  of  trees, for the two  models  of Cayley trees (non-plane
labelled trees) and Otter  trees (non-plane unlabelled binary  trees),
based on~\cite{MeMo78,ReSz67,Szekeres82} and the present paper.
\emph{Depth} refers to the depth of a randomly chosen node in the tree;
\emph{height} is the maximum distance of any node from the root;
\emph{diameter} is relative to the unrooted version of the trees
under consideration.
}
\end{figure}

The fact, 
established rigorously in the present paper (Theorems~\ref{clt} to~\ref{momdiam-thm}
and Corollaries~\ref{cor-oddeve}, \ref{cor-ratio}),
is that, up to scaling, height and diameter behave for some non-plane unlabelled trees
similarly to what is known for ordered trees: 
see Figure~\ref{compare-fig} for some striking data.
This brings further evidence for 
the hypothesis that probabilistic models, such as the
\emph{Continuum Random Tree}, may be applicable to unordered trees---this
has  indeed been recently  confirmed, in the binary  case at least, by
Marckert and Miermont~\cite{MaMi10}.  It is  piquant to note that the
probabilistic approach  of~\cite{MaMi10}   relies  in part   on  large
deviation  estimates for height,  which were developed analytically by
us  in the  earlier  conference version~\cite{BrFl08}  of the  present
paper.  (Recently, \citet{HaMi10}  have developped an alternative
approach that further allows them to prove the convergence of a large class of
trees  towards continuum limits.    This encompasses  a self-contained
proof of the   result in \cite{MaMi10}   and other more  examples with
stable  tree limits.) An analytic  treatment of the height of unordered
trees with all  degrees allowed has  been given recently by Drmota and
Gittenberger (see~\cite{DrGi08}  and the account  in~\cite{Drmota09}).
Together  with the present  study, it confirms, among unordered trees,
the  existence of universal phenomena   regarding height and  profile,
which parallel what  has been  known for a  long  time regarding their
ordered counterparts. As usual, the analytic approach advocated in the present
paper 
has the advantage of providing precise estimates, 
with speed of convergence estimates, local limit laws, and convergence of moments.

Finally, the fact that, up to a possible  linear change of scale, some
of the main characteristics of trees, such as height and diameter, are
not sensitive to whether trees are planar (ordered) or not, is also of
some  relevance  to     the   emerging   field    of   ``probabilistic
logic''~\cite{KoZa08,LeSa97}.   For  instance,  there  is interest  there  in
determining the    probability of satisfiability   of  random  boolean
formulae    obeying    various   randomness     models   (see,   e.g.,
~\cite{ChFlGaGi04,Gardy05}).  In  this  context,  our results  suggest
that   the   commutativity of  logical    conjunction  and disjunction
(reflected by the non-planarity of associated expression trees) should
not, in many  cases, have a major  effect on  complexity properties of
random Boolean expressions.

\bigskip\noindent
{\bf  Acknowledgements.}  Thanks to   Jean-Fran{\c c}ois  Marckert and
Gr\'egory   Miermont  for inciting  us  to  investigate  in detail the
distribution of   height.  We also  express  our  gratitude  to Alexis
Darrasse and   Carine Pivoteau for  designing   and programming for us
efficient Boltzmann samplers of   binary trees and  providing detailed
statistical data that guided our first  analyses of this problem. This
work was  supported  in part by  the  French ANR Project   {\sc Boole}
dedicated to Boolean frameworks.

\setlength{\bibsep}{.3em}
\bibliographystyle{abbrvnat}
\def\cprime{$'$}

\end{document}